\documentclass[10pt]{article}

\usepackage{microtype}
\usepackage{graphicx}
\usepackage{subcaption}
\usepackage{booktabs}

\pdfoutput=1

\usepackage{amsmath,amsbsy,amsgen,amscd,amssymb,amsthm,amsfonts,stmaryrd}
\usepackage{mathtools}

\usepackage{bm}
\usepackage{enumerate}

\usepackage{microtype}      

\usepackage[colorlinks,citecolor=blue]{hyperref}
\usepackage{cleveref}
\usepackage{url}            

\usepackage[usenames,dvipsnames,svgnames]{xcolor}
\usepackage{graphicx}

\graphicspath{{art/}}

\definecolor{dark-gray}{gray}{0.3}
\definecolor{dkgray}{rgb}{.4,.4,.4}
\definecolor{dkblue}{rgb}{0,0,.5}
\definecolor{medblue}{rgb}{0,0,.75}
\definecolor{rust}{rgb}{0.7,0.1,0.1}
\definecolor{drust}{rgb}{0.5,0.1,0.1}

\hypersetup{urlcolor=drust}
\hypersetup{citecolor=blue}
\hypersetup{linkcolor=rust}

\newtheorem{theorem}{Theorem}
\newtheorem{lemma}[theorem]{Lemma}
\newtheorem{corollary}[theorem]{Corollary}

\numberwithin{equation}{section}
\numberwithin{theorem}{section}

\newcommand{\R}{\mathbb{R}}

\renewcommand{\phi}{\varphi}

\newcommand{\proj}{\operatorname{proj}}

\DeclareFontFamily{OT1}{pzc}{}
\DeclareFontShape{OT1}{pzc}{m}{it}{<-> s * [1.200] pzcmi7t}{}
\DeclareMathAlphabet{\mathpzc}{OT1}{pzc}{m}{it}

\DeclareMathOperator{\prox}{prox}

\DeclareMathOperator*{\argmin}{arg\,min}

\newtheorem{assumption}{Assumption \!\!}

\usepackage{xcolor,colortbl}

\makeatletter
\newtheorem*{rep@theorem}{\rep@title}
\newcommand{\newreptheorem}[2]{%
	\newenvironment{rep#1}[1]{%
		\def\rep@title{#2 \ref{##1}}%
		\begin{rep@theorem}}%
		{\end{rep@theorem}}}
\makeatother

\newreptheorem{theorem}{Theorem}
\newreptheorem{lemma}{Lemma}

\theoremstyle{definition}

\usepackage{lscape}
\usepackage{makecell}

\usepackage{natbib}
\usepackage{algorithm}
\usepackage{algorithmic}
\usepackage[margin=1in]{geometry}

\crefname{theorem}{Theorem}{Theorems}
\Crefname{theorem}{Theorem}{Theorems}
\crefname{proposition}{Proposition}{Propositions}
\Crefname{proposition}{Proposition}{Propositions}
\crefname{lemma}{Lemma}{Lemmas}
\Crefname{lemma}{Lemma}{Lemmas}
\crefname{corollary}{Corollary}{Corollaries}
\Crefname{corollary}{Corollary}{Corollaries}
\crefname{definition}{Definition}{Definitions}
\Crefname{definition}{Definition}{Definitions}
\crefname{assumption}{Assumption}{Assumptions}
\Crefname{assumption}{Assumption}{Assumptions}
\crefname{remark}{Remark}{Remarks}
\Crefname{remark}{Remark}{Remarks}

\crefname{appendix}{Appendix}{Appendices}
\Crefname{appendix}{Appendix}{Appendices}
\crefname{appsubsection}{Appendix}{Appendices}
\Crefname{appsubsection}{Appendix}{Appendices}

\newcommand{\E}{\mathbb{E}}

\usepackage[colorinlistoftodos,prependcaption]{todonotes}

\title{Convergence Rate of the Last Iterate of Stochastic Proximal Algorithms}
\author{Kevin Kurian Thomas Vaidyan\footnote{Department of Computer Science, University of British Columbia. \url{kevinktv@cs.ubc.ca}} \and Michael P. Friedlander$^{1,}$\footnote{Department of Computer Science, University of British Columbia. \url{michael.friedlander@ubc.ca}}\and Ahmet Alacaoglu$^{1,}$\footnote{Department of Mathematics, University of British Columbia. \url{alacaoglu@math.ubc.ca}}}
\date{}
\begin{document}

\maketitle
\begin{abstract}
    We analyze two classical algorithms for solving additively composite convex optimization problems where the objective is the sum of a smooth term and a nonsmooth regularizer: proximal stochastic gradient method for a single regularizer; and the randomized incremental proximal method, which uses the proximal operator of a randomly selected function when the regularizer is given as the sum of many nonsmooth functions. We focus on relaxing the bounded variance assumption that is common, yet stringent, for getting last iterate convergence rates. We prove the $\widetilde{O}(1/\sqrt{T})$ rate of convergence for the last iterate of both algorithms under componentwise convexity and smoothness, which is optimal up to log terms. Our results apply directly to graph-guided regularizers that arise in multi-task and federated learning, where the regularizer decomposes as a sum over edges of a collaboration graph.
\end{abstract}
\section{Introduction}\label{sec: intro}
\footnotetext[1]{Joint last authors.}
Composite optimization problems with structured regularizers arise throughout machine learning. In multi-task learning, for example, related tasks share statistical structure that can be exploited through \emph{graph-guided regularizers} \citep{kim2009statistical}: a collaboration graph connects tasks with shared characteristics, and the regularizer encourages their parameters to agree. This formulation similarly manifests in federated optimization, where agents learn related models without centralizing data, and in sensor networks that aggregate spatially correlated measurements.

A canonical instance is the \emph{network Lasso} \citep{hallac2015network}, where each node $i$ maintains parameters $x_i$ and the regularizer penalizes disagreement between neighbors:
\[
g(x) = \sum_{(i,j)\in\mathcal{E}} w_{ij} \|x_i - x_j\|_p.
\]
The regularizer naturally decomposes as a sum over edges, $g = \sum_{i} g_i$, where each $g_i$ involves only the variables at connected nodes.

More generally, consider the regularized optimization problem
\begin{equation}\label{eq:problem}
    \min_{x}\ h(x) := f(x) + g(x), 
\end{equation}
where $f\colon\mathbb{R}^n\to\mathbb{R}$ is convex and differentiable, and $g\colon\mathbb{R}^n\to\mathbb{R}\cup\{+\infty\}$ is convex. We assume access to a stochastic gradient oracle that returns unbiased estimates $\nabla f_i$ that satisfy 
\begin{equation*}
    \mathbb{E}[\nabla f_i(x)] = \nabla f(x) \quad \text{for all } x.
\end{equation*}
A standard assumption for analyzing stochastic gradient descent (SGD) is uniformly bounded variance of the gradient estimates. This assumption is overly restrictive and fails even for simple problems; see \cref{sec: prob_set} for details.

\citet{garrigos2025last} and \citet{attia2025fastlastiterateconvergencesgd}
showed that the bounded variance assumption can be avoided for last-iterate analyses of SGD
in unconstrained optimization by instead requiring that
\begin{equation}\label{eq:componentwise-smooth}
    \text{each $f_i$ is convex and $L$-smooth.}
\end{equation}
Under this condition, the last iterate of SGD achieves the optimal (up-to-$\log$)
$\widetilde{O}(1/\sqrt{T})$ rate. The extension to
\emph{regularized} problems, however, does not follow directly:
the proximal operator is nonlinear, so the iterates no longer admit
the closed-form expansion that these analyses exploit.

The workhorse method for regularized problems in machine learning
is proximal SGD \citep{duchi2009efficient,yang2020proxsgd}, which iterates as
\begin{equation*}\label{eq:intro-spgd}
    x_{t+1} = \prox_{\tau g}(x_t - \tau \nabla f_{i_t}(x_t)), \tag{SPGD}
\end{equation*}
where $\prox_{g}(x) := \argmin_z \{ g(z) + \tfrac{1}{2}\|x-z\|^2 \}$ and $i_t$ is selected i.i.d. at every iteration with respect to a fixed distribution.
(More generally, the step size $\tau$ may vary with each iteration,
but our analysis focuses on constant step sizes.)
Our purpose is to extend the last-iterate guarantees of
\citet{garrigos2025last,attia2025fastlastiterateconvergencesgd} to this setting.

\textbf{A classical example. } Let us start with perhaps the most standard, textbook example of a regularized problem, Lasso, or linear least squares regression with $\ell_1$ regularization, given as
\begin{equation*}
\min_{x\in\mathbb{R}^n}\frac{1}{2N} \sum_{i=1}^N (\langle a_i, x\rangle - b_i)^2 + \lambda \| x\|_1,
\end{equation*}
where $a_i \in \mathbb{R}^n$ and $b_i \in \mathbb{R}$. Of course, there are many methods developed for this problem, including variance reduced algorithms \citep{gower2020variance}. However, when the number of data points $N$ is extremely large, SGD-based methods are the only choices since they have rates of convergence not depending on $N$, allowing arbitrarily large $N$. 

Here, the typical choice of a stochastic gradient is
\begin{equation*}
\nabla f_i(x) = a_i(\langle a_i, x\rangle - b_i).
\end{equation*}
Even for this problem, classical assumptions such as the bounded variance (or other distributional assumptions) do not necessarily hold. Hence, the existing results showing the last-iterate convergence rate of proximal SGD from \citet{liu2024revisitinglastiterateconvergencestochastic} do not apply. Moreover, due to the existence of the $\ell_1$ regularizer, existing results for unconstrained SGD without bounded variance from \citet{garrigos2025last,attia2025fastlastiterateconvergencesgd} also do not apply. 

That is, the literature is currently missing the theoretical guarantees for the most standard setup for solving this classical problem in the large-scale regime: proximal SGD with last-iterate as the output. Our results address this gap.

\textbf{Contributions. }Our main contributions are as follows:

\begin{itemize}
    \item We prove that the last iterate of \emph{proximal SGD} converges at the
    optimal (up-to-$\log$) rate $\widetilde{O}(1/\sqrt{T})$ under componentwise convexity and
    smoothness of $f_i$, with second moment of stochastic gradients bounded only at the solution
    (\cref{BIGTHEOREM_main}). As a special case, we obtain the same rate for
    projected SGD.
    
    \item When the regularizer decomposes additively as $g(x) = \sum_{i=1}^m g_i(x)$,
    the proximal map $\prox_g$ may be intractable even when each $\prox_{g_i}$
    admits closed form. We prove the $\widetilde{O}(1/\sqrt{T})$ last-iterate
    rate for the \emph{randomized incremental proximal method} \citep{bertsekas2011incremental}
    \begin{equation*}
        x_{t+1} = \prox_{\tau m g_{j_t}}(x_t - \tau \nabla f_{i_t}(x_t)), \label{eq:intro-incrementalpgd} \tag{RIPM}
    \end{equation*}
    where $j_t$ is selected uniformly at random and $i_t$ is selected as \eqref{eq:intro-spgd} (\cref{BIGTHEOREM2_main}).
    This implies optimal (up-to-$\log$) rates for stochastic proximal point methods.
    
    \item For graph-guided regularizers, such as network Lasso and multi-task learning with task similarity constraints, our incremental proximal results lead to last-iterate convergence for the BlockProx algorithm \citep{LinKuangAlacaogluFriedlander2025}. We thus establish optimal rates where only ergodic convergence was previously known without bounded variance.
    
    \item We provide numerical experiments confirming that the last iterate outperforms
    averaged iterates in practice for proximal SGD.
\end{itemize}

 \section{Problem setting and main assumptions}\label{sec: prob_set}

The standard bounded variance assumption in stochastic optimization requires $\E[\|\nabla f_i(x) - \nabla f(x)\|^2] \leq \sigma^2$ uniformly over all~$x$. This overly restrictive condition fails even for unconstrained linear least-squares. For unconstrained smooth problems (i.e., $g = 0$), recent work has shown that it suffices to assume finite second moments at the solution:
\begin{equation}\label{eq:second-moment-solution}
    \E\|\nabla f_i(x^*)\|^2 = \sigma_{\star, f}^2 < \infty
    \quad\text{for some } x^\star \in \argmin f.
\end{equation}
When multiple minimizers exist, this bound holds for every minimizer \citep[Lemma~8.23]{garrigos2024handbook}. For the composite problem~\eqref{eq:problem} with $g \neq 0$, we impose the analogous condition at minimizers of $h = f + g$. The analysis, however, requires new tools because the proximal operator introduces additional structure not present in the unconstrained setting.

We consider two settings for problem~\eqref{eq:problem}:
\begin{enumerate}
    \item \emph{Proximal SGD.} The smooth component $f$ is an expectation $f(x) = \E_{i \sim D}[f_i(x)]$ over a distribution $D$, and we have access to the full proximal operator $\prox_{\tau g}$.
    \item \emph{Incremental proximal.} In addition, the regularizer decomposes as $g(x) = \sum_{j=1}^m g_j(x)$, and we access only proximal operators of individual components $g_j$.
\end{enumerate}

\begin{assumption}[Standing assumptions]\label{assumptions}
    \hfill
    \begin{enumerate}
        \item The solution set $X^* = \argmin_x h(x)$ is nonempty. We write $x^*$ for an arbitrary element of $X^*$, $h^* = h(x^*)$ for the optimal value, and 
        \begin{equation*}
            D_*^2 = \min_{x^* \in X^*}\|x^* - x_0\|^2,
        \end{equation*}
        for the squared distance from the initial point to the solution set. Let also $\mathbb{E}\|\nabla f_i(x^*)\|^2 = \sigma_*^2 < \infty$ for some $x^* \in \arg\min h$.
        
        \item Every component function $f_i\colon \R^n \to \R$ is convex and differentiable with $L$-Lipschitz gradient.
        
        \item The regularizer $g\colon \R^n \to \R \cup \{+\infty\}$ is proper, convex, and lower semicontinuous.
    \end{enumerate}
\end{assumption}

For the incremental proximal setting, we impose additional structure on the regularizer.

\begin{assumption}[Decomposable regularizer]\label{assumptions2_main}
    \hfill
    \begin{enumerate}
        \item Each $g_j\colon \R^n \to \R$ is proper, convex, and lower semicontinuous.
        \item Each $g_j$ is $L_g$-Lipschitz: $|g_j(x) - g_j(y)| \leq L_g\|x - y\|$ for all $x, y \in \R^n$.
    \end{enumerate}
\end{assumption}

\noindent The Lipschitz condition on $g_j$ controls the variance introduced by proximal stochasticity. This requirement is common in methods that access nonsmooth terms randomly, in the non-strongly convex case, see for example \citep{asi2019stochastic,bertsekas2011incremental,cai2025last}. We will share more insights regarding the need for this condition in Section \ref{sec: main_ideas_incr}.

\paragraph{Notation. } Given iterates $(x_t)$, we denote as $\mathcal{F}(x_0, \dots x_t)$ the $\sigma$-algebra generated by $x_0, \dots x_t$. The standard notation $\E_t[\cdot]$ is used for the expectation conditioned on $\mathcal{F}(x_0, \dots x_t)$, that is, $\mathbb{E}[\cdot|\mathcal{F}_t] = \mathbb{E}_t[\cdot]$.

\subsection{Variance control via co-coercivity}\label{sec:starting-point}

This section presents the mechanism that relaxes the bounded variance assumption for unconstrained SGD, and explains why the proximal setting requires new analytical tools.
The mechanism relies on co-coercivity: convexity and $L$-smoothness of each $f_i$ imply \citep[Thm.~2.1.5]{nesterov2018lectures}
\begin{equation}\label{eq:cocoercivity}
        \|\nabla f_i(x) - \nabla f_i(y)\|^2  \leq 2L\bigl(f_i(x) - f_i(y) - \langle \nabla f_i(y), x - y \rangle\bigr).
\end{equation}
The right-hand side involves only function values, and thus bounds the stochastic gradient norm $\|\nabla f_i(x_t)\|^2$ without additional assumptions.

Consider standard SGD for unconstrained minimization, where $g \equiv 0$ and let $x^\star\in\argmin f$ (where we note its distinction from $x^*$, the solution of the \emph{composite} problem):
\[
x_{t+1} = x_t - \tau \nabla f_{i_t}(x_t).
\]
Expanding the squared distance to a reference point $z$ gives
\begin{align}\label{eq:sgd-expansion}
    \|x_{t+1} - z\|^2
    = \|x_t - z\|^2
    - 2\tau \langle \nabla f_i(x_t), x_t - z \rangle+ \tau^2 \|\nabla f_i(x_t)\|^2.
\end{align}
Taking expectation over $i$ and applying convexity of each $f_i$ to the inner product yields
\begin{align}\label{eq:sgd-expected}
    \mathbb{E}\|x_{t+1} - z\|^2
    &\leq \mathbb{E}\|x_t - z\|^2
    - \underbrace{ 2\tau (f(x_t) - f(z))}_{\text{descent term}} + \underbrace{\tau^2 \mathbb{E}\|\nabla f_i(x_t)\|^2}_{\text{variance term}}.
\end{align}
The descent term is negative whenever $f(x_t) > f(z)$ and thus drives convergence. The variance term, however, is positive, and without an \emph{a priori} bound, it prevents meaningful control of the distance to $z$.

We now apply the co-coercivity inequality~\eqref{eq:cocoercivity} with $y = x^\star$ and use Young's inequality to deduce
\begin{align*}
    \|\nabla f_i(x_t)\|^2 
    &\leq 2\|\nabla f_i(x^*)\|^2 + 4L \bigl( f_i(x_t) - f_i(x^*) - \langle \nabla f_i(x^*),\, x_t - x^* \rangle \bigr).
\end{align*}
Taking expectation over $i$, the inner product vanishes because $\E_i[\nabla f_i(x^*)] = \nabla f(x^*) = 0$. Using~\eqref{eq:second-moment-solution} and multiplying by $\tau^2$ yields the variance bound
\begin{equation}\label{eq:variance-bound}
    \tau^2\mathbb{E}\|\nabla f_i(x_t)\|^2 \leq 2\tau^2\sigma_*^2 + 4\tau^2 L(f(x_t) - f(x^*)).
\end{equation}
The term $f(x_t) - f(x^*)$ on the right-hand side has the same form as the descent term in \eqref{eq:sgd-expected}. For $\tau \leq 1/(2L)$ and $z = x^\star$, the variance contribution $4\tau^2 L(f(x_t) - f^\star)$ is at most $2\tau(f(x_t) - f^\star)$, and thus absorbed by the descent term. The residual $2\tau^2 \sigma_{\star, f}^2$ remains bounded and determines the final convergence rate. This gives the guarantee on the average of the iterates of the algorithm. For last iterate guarantees, one needs to plug in $z=z_t$ for a special choice of $z_t$ (cf. \Cref{subsec: main_ideas_psgd}) and apply a more involved analysis \citep{garrigos2025last}. Yet, the mechanism explained above remains key to go beyond the bounded variance or bounded gradient assumptions.

This argument relies on two properties that fail when $g \not\equiv 0$.
First, the expansion~\eqref{eq:sgd-expansion} exploits the linear structure of SGD. The proximal operator is nonlinear, so this identity fails. We replace it with the three-point identity and firm nonexpansiveness of the proximal map (\Cref{sec: proxsgd}).
Second, the cancellation $\E[\langle \nabla f_i(x^\star), x_t - x^\star \rangle] = 0$ requires $\nabla f(x^\star) = 0$. With a nontrivial regularizer, denoting $x^* \in \argmin h$, the optimality condition $0 \in \nabla f(x^*) + \partial g(x^*)$ allows $\nabla f(x^*) \neq 0$.
Overcoming these obstacles while preserving last-iterate guarantees constitutes the main technical contribution of this work. 

\subsection{Related Work}

\begin{table*}[t]
    \centering
    \footnotesize
    \begin{tabular}{l c c c c c c}
        \toprule
        &
        \textbf{\makecell{Oracle\\for $g$}} &
        \textbf{\makecell{Unbounded \\ variance and \\ no distribution \\ assumption?}} &
        \textbf{\makecell{Handles\\prox?}} &
        \textbf{\makecell{Last\\iter.}} &
        \textbf{\makecell{Bound indep.\\\# smooth funcs}} &
        \textbf{\makecell{Stochastic \\prox} }\\
        \midrule
        Folklore & $\prox_g$ & $\times$ & $\checkmark$ & $\times$ & $\checkmark$ & $\times$ \\[1.5mm]
        \citet{khaled2023unified} & $\prox_g$ & $\checkmark$ & $\checkmark$ & $\times$ & $\checkmark$ & $\times$ \\[1.5mm]
        \citet{garrigos2025last, attia2025fastlastiterateconvergencesgd} & N/A & $\checkmark$ & $\times$ & $\checkmark$ & $\checkmark$ & $\times$ \\[1.5mm]
        \citet{liu2024revisitinglastiterateconvergencestochastic} & $\prox_g$ & $\times$ & $\checkmark$ & $\checkmark$ & $\checkmark$ & $\times$ \\[1.5mm]
        \citet{bertsekas2011incremental} & $\prox_{g_i}$ & $\times$ & $\checkmark$ & $\checkmark$ & $\checkmark$ & $\checkmark$ \\[1.5mm]
        Section \ref{sec: proxsgd} & $\prox_g$ & $\checkmark$ & $\checkmark$ & $\checkmark$ & $\checkmark$ & $\times$ \\[1.5mm]
        Section \ref{sec: rand_inc_prox} & $\prox_{g_i}$ & $\checkmark$ & $\checkmark$ & $\checkmark$ & $\checkmark$ & $\checkmark$ \\
        \bottomrule
    \end{tabular}
    \caption{Comparison of relevant results}
    \label{tab:comparison}
\end{table*}
Since we consider two sets of methods, based on stochastic gradient descent or stochastic proximal point, we divide the comparison to two sections. A summary of the comparison of our results to the most related results in the literature is provided in \Cref{tab:comparison}.

\subsubsection{SGD world}

Most convergence analyses for SGD assume bounded variance \citep{nemirovski2009robust,needell2014stochastic,bottou2018optimization}. Several works relax this assumption \citep{poljak1973pseudogradient,gladyshev1965stochastic,khaled2022better,khaled2023unified,neu2024dealing,alacaoglu2025towards,moulines2011non}, though they focus on the average iterate rather than the last iterate, and some are restricted to unconstrained problems.

For the last iterate, most results assume bounded variance, bounded gradients, or other distributional assumptions on the gradient noise \cite{shamir2013stochastic,harvey2019tight,orabona2020last,liu2024revisitinglastiterateconvergencestochastic}; some extend to composite problems. The influential work by \citet{moulines2011non} established a suboptimal $O(T^{-1/3})$ rate for the last iterate. Two recent works \cite{attia2025fastlastiterateconvergencesgd,garrigos2025last} achieve the near-optimal $\widetilde{O}(T^{-1/2})$ rate without bounded variance, under \eqref{eq:componentwise-smooth}.

A key feature of these results is \emph{N-independence}: they apply even when the number of component functions $N$ in the sum $f(x) = (1/N)\sum_{i=1}^N f_i(x)$ is infinite, with convergence bounds that do not depend on $N$.

A complementary line of work focuses on finite $N$ with non-i.i.d.\ sampling, such as without replacement or cyclic selection rather than with replacement. Two recent works establish last-iterate guarantees: \citet{cai2025last} study incremental gradient and incremental proximal-point methods for unconstrained problems, where components are selected cyclically; \citet{liu2025improved} address the composite problem but require $\prox_g$.

\emph{The key distinction is the following:} these convergence bounds depend polynomially on $N$, hence require a finite number of component functions; our bounds are $N$-independent. That is, the results of \citet{cai2025last} and \citet{liu2025improved} require multiple passes over the data for making progress, whereas our bounds show progress with a single pass. Moreover, neither work considers stochastic sampling of regularizers (using $\prox_{g_i}$ when $g(x)=\sum_{i=1}^m g_i(x)$), which is the focus of Sec.~\ref{sec: rand_inc_prox}.

\subsubsection{Proximal point world}
We now consider the more general setting where the algorithm accesses unbiased samples of both the smooth part $f$ and the nonsmooth part $g$. In particular, given a finite-sum problem 
\begin{equation}
    \min_{x} \tfrac{1}{N}\textstyle\sum_{i=1}^N f_i(x) + g_i(x) ,
\end{equation}
stochastic proximal point (SPP) methods iterate as
\begin{align*}
    x_{t+1} = \prox_{\tau(f_i+g_i)}(x_t),
\end{align*}
for $i\in\{1,\dots,N\}$ selected uniformly at random.

The proximal operator $\prox_{f_i + g_i}$ is often intractable even when $\prox_{f_i}$ and $\prox_{g_i}$ admit closed forms. Consequently, most work on stochastic proximal point methods assumes $g_i = 0$ \citep{asi2019stochastic,bianchi2016ergodic}. A more practical approach treats the smooth and nonsmooth terms separately, and apply the gradient of $f_i$ followed by the proximal operator of $g_i$. \citet{bertsekas2011incremental} analyzed this strategy in \eqref{eq:intro-incrementalpgd} in order to establish asymptotic convergence of the sum $f(x_k) + g(x_k)$ but without nonasymptotic rates for the last iterate.

Recent work by \citet{tovmasyan2025revisiting} and \citet{condat2025stochastic} provides nonasymptotic guarantees for related methods, but under strong convexity assumptions that exclude many regularized problems of practical interest.

\section{Proximal SGD}\label{sec: proxsgd}

This section establishes the last-iterate convergence rate for proximal SGD \eqref{eq:intro-spgd}. We state the main theorem, outline the proof strategy, and specialize the result to projected SGD. Full proofs appear in \cref{app: spgd}.
\subsection{Statement of the result}\label{sec: proxsgd_statement}
The following theorem gives the last-iterate convergence rate for \eqref{eq:intro-spgd}. The proof appears in \cref{subsec: proof_main_thm_spgd}.
\begin{theorem}[Last-iterate convergence]\label{BIGTHEOREM_main}
    Let \cref{assumptions} hold.  In \eqref{eq:intro-spgd}, let $\tau = 1/(3 L \sqrt{T})$. Then
    \begin{align*}
        \E\left[h(x_{T+1}) - h^* \right] &\leq \frac{9}{\sqrt{T}}\Big[ LD_*^2+\frac{1}{\sqrt{T}}(h(x_0) - h^*) 
        +\frac{\sigma_*^2}{L}\Big(\frac{1}{T}+4\ln(T+1)\Big) \Big]\!.
    \end{align*}
\end{theorem}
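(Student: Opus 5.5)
The proof has two parts. First comes a one-step inequality for \eqref{eq:intro-spgd} against an arbitrary $\mathcal{F}_t$-measurable reference point $z$. Then that inequality is fed into the last-iterate machinery of \citet{garrigos2025last}, with the reference points $z_t$ chosen as convex combinations of $x^*$ and past iterates.

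\textbf{Step 1 (one-step inequality).} Write $x_{t+1}=\prox_{\tau g}(x_t-\tau\nabla f_{i_t}(x_t))$. The prox optimality condition and the three-point identity give, for every $z$,
\begin{align*}
\|x_{t+1}-z\|^2 &\le \|x_t-z\|^2-\|x_{t+1}-x_t\|^2-2\tau\langle\nabla f_{i_t}(x_t),x_{t+1}-z\rangle\\
&\quad -2\tau\bigl(g(x_{t+1})-g(z)\bigr).
\end{align*}
I split the inner product as $\langle\nabla f_{i_t}(x_t),x_t-z\rangle+\langle\nabla f_{i_t}(x_t),x_{t+1}-x_t\rangle$.
\begin{itemize}
\item For the first piece, I use convexity of $f_{i_t}$ and take $\E_t$, which gives $f(x_t)-f(z)$.
\item For the second piece, I write $\nabla f_{i_t}(x_t)=\nabla f(x_t)+(\nabla f_{i_t}(x_t)-\nabla f(x_t))$. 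The $\nabla f(x_t)$ part is controlled by $L$-smoothness of $f$: $f(x_{t+1})\le f(x_t)+\langle\nabla f(x_t),x_{t+1}-x_t\rangle+\tfrac L2\|x_{t+1}-x_t\|^2$. The noise part is handled by Young's inequality against the $-\|x_{t+1}-x_t\|^2$ term.
\end{itemize}
This yields
\[
2\tau\,\E_t[h(x_{t+1})-h(z)]\le\|x_t-z\|^2-\E_t\|x_{t+1}-z\|^2+2\tau^2\E_t\|\nabla f_{i}(x_t)-\nabla f(x_t)\|^2,
\]
valid for $\tau\le 1/(2L)$. A cleaner variant that avoids the noise decomposition uses firm nonexpansiveness: compare $x_{t+1}$ with the deterministic prox-gradient point, which is $\mathcal{F}_t$-measurable. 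I would try the variant above first.

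\textbf{Step 2 (variance at $x^*$ without $\nabla f(x^*)=0$).} Bound $\E_t\|\nabla f_i(x_t)-\nabla f(x_t)\|^2\le \E_t\|\nabla f_i(x_t)\|^2$. Then co-coercivity \eqref{eq:cocoercivity} with $y=x^*$ gives $\E_t\|\nabla f_i(x_t)\|^2\le 2\sigma_*^2+4L\,D_f(x_t,x^*)$, where $D_f(x,x^*)=f(x)-f(x^*)-\langle\nabla f(x^*),x-x^*\rangle$. The inner product no longer vanishes. The fix is that $-\nabla f(x^*)\in\partial g(x^*)$, so
\[
D_f(x_t,x^*)\le f(x_t)-f(x^*)+g(x_t)-g(x^*)=h(x_t)-h^*.
\]
Hence the variance term is at most $4\tau^2\sigma_*^2+8\tau^2L(h(x_t)-h^*)$. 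This is the key replacement for \eqref{eq:variance-bound}. Note that it involves $h(x_t)$ rather than $h(x_{t+1})$, an index shift.

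\textbf{Step 3 (last iterate).} I follow \citet{garrigos2025last}. Take $z_t=\sum_s w_{t,s}x_s+w_{t,*}x^*$ with the standard weights $\lambda_t$, so that $h(z_t)-h^*\le \sum_s w_{t,s}(h(x_s)-h^*)$ by convexity of $h$. Sum the Step 1 inequalities with $z=z_t$ and telescope. The mismatch between $\|x_{t+1}-z_t\|^2$ and $\|x_{t+1}-z_{t+1}\|^2$ is handled by convexity of the squared norm, exactly as in the unconstrained proof, since that argument uses only the one-step inequality and convexity of $h$, not linearity of the update.

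The positive terms $8\tau^2L(h(x_t)-h^*)$ accumulate as $\tau^2L\sum_t(h(x_t)-h^*)$. They are controlled by a separate average-iterate bound from Step 1 with $z=x^*$:
\[
\sum_t 2\tau(1-4\tau L)\E(h(x_{t+1})-h^*)\le D_*^2+8\tau^2L(h(x_0)-h^*)+4T\tau^2\sigma_*^2.
\]
The index shift is what produces the $h(x_0)-h^*$ term in the theorem. The harmonic-sum weights produce the $\ln(T+1)$ factor.

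Finally, plugging in $\tau=1/(3L\sqrt T)$ and collecting constants gives the bound in \Cref{BIGTHEOREM_main}. I expect the main obstacle to be Step 3: verifying that the weighted telescoping of \citet{garrigos2025last} goes through with the shifted variance terms while keeping the constant $9$. Steps 1–2 are standard once the subgradient trick $-\nabla f(x^*)\in\partial g(x^*)$ is noticed.
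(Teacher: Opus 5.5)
Your Steps 1 and 2 match the paper's one-step and variance-transfer lemmas. Together they give its per-iteration bound $\E_t[h(x_{t+1})-h(z_t)]\le\frac{1}{2\tau}\E_t[\|x_t-z_t\|^2-\|x_{t+1}-z_t\|^2]+v+a\,(h(x_t)-h^*)$, with $a=4\tau L$ and $v=2\tau\sigma_*^2$ for your choice $\epsilon'=1$.

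The gap is in Step 3. The extra terms $a(h(x_t)-h^*)$ do not accumulate as an unweighted sum. In the weighted telescoping, the inequality at step $t$ is multiplied by the last-iterate weight $\alpha_t$. With weights that ignore $a$ (i.e.\ $p_t=\frac{T-t+1}{T-t+2}$) one has $\alpha_t=\frac{T+1}{T-t+1}$. After dividing by $\alpha_T=T+1$, the accumulated error is $a\sum_{t=0}^{T}\E r_t/(T-t+1)$ with $r_t=h(x_t)-h^*$, which puts weight close to $1$ on the final iterates. Your average-iterate bound controls only $\sum_t\E r_t\lesssim D_*^2/(2\tau(1-a))+O(T\tau\sigma_*^2)$. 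It cannot exclude that this mass sits at $t\approx T$, so the best it yields is $a\sum_t\E r_t\approx 2LD_*^2/(1-a)+O(\sigma_*^2/L)$, which does not decay in $T$. Bounding $\E r_t$ for $t$ near $T$ is the last-iterate question itself, so this route is circular.

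The missing idea, which is how the paper proceeds (following Zamani and Glineur), is to absorb $a$ into the weights rather than treat it as error.
\begin{itemize}
\item Set $z_t=(1-p_t)x_t+p_tz_{t-1}$ with $z_{-1}=x^*$, $p_0=1$ and $p_t=\frac{a+T-t+1}{T-t+2}$. Equivalently, $\alpha_t=\frac{T-t+2}{a+T-t+1}\alpha_{t-1}$ with $\alpha_{-1}=\alpha_0=1$.
\item Keep $-a h(x_t)+ah^*$ on the left, multiply by $\alpha_t$, and telescope using $\alpha_t\|x_t-z_t\|^2\le\alpha_{t-1}\|x_t-z_{t-1}\|^2$.
\item Apply Jensen to $h(z_t)$, with $z_t=\frac{1}{\alpha_t}x^*+\sum_{s\le t}\frac{\alpha_s-\alpha_{s-1}}{\alpha_t}x_s$, and exchange the double sum. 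The coefficient of $r_t$ for each $1\le t\le T$ is then $\alpha_{t-1}(T-t+2)-\alpha_t(a+T-t+1)=0$.
\item What remains is $\alpha_T\E r_{T+1}\le\frac{1}{2\tau}D_*^2+a\,r_0+v\sum_{t=0}^T\alpha_t$. The leftover $a\,r_0$ is the source of the $h(x_0)-h^*$ term, so your index-shift intuition is right.
\end{itemize}
The price is that $\alpha_T\ge((T+1)/2)^{1-a}$ instead of $T+1$, and $\sum_{t=1}^T\alpha_t/\alpha_T\le 8T^a\ln(T+1)$, both via Gautschi's inequality. Since $a=O(1/\sqrt T)$, $T^a\le\exp(8/(3e))$ is an absolute constant.

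This construction needs $a<1$ so that $p_t\in[0,1]$. With your $\epsilon'=1$ you get $a=4\tau L=4/3$ at $T=1$, and also $1-4\tau L<0$ in your average bound. Take instead $\epsilon'=\frac{1-2\tau L}{1+2\tau L}$, which gives $a=\frac{4\tau L}{1+2\tau L}<1$ whenever $\tau L<1/2$.
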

The $O(\ln(T+1)/\sqrt{T})$ rate matches known last-iterate guarantees for unconstrained SGD \citep{garrigos2025last,attia2025fastlastiterateconvergencesgd}. Compared to the optimal $O(1/\sqrt{T})$ rate that is achieved by averaged iterates or with non-standard step sizes under bounded variance \citep{jain2019making}, our bound incurs an extra logarithmic term of $\ln (T+1)$.

When the objective has finite-sum structure $f = \frac{1}{N}\sum_{i=1}^N f_i$, our bound is independent of $N$, which allows $N$ to be arbitrarily large and is a key benefit of SGD. The bound depends only on the smoothness constant $L$, the variance at the solution $\sigma_*^2$, and the squared initial distance $D_*^2$; see \cref{sec: prob_set} for precise definitions.

\subsection{Proof setup}\label{subsec: main_ideas_psgd}
The proof of \Cref{BIGTHEOREM_main} proceeds in two stages: a one-iteration analysis that bounds the expected progress per step, followed by a last-iterate reduction that telescopes these bounds into a final iterate guarantee.

\subsubsection{One-iteration analysis}\label{subsubsec:one-iteration}
The essential intermediate result is a one-iteration bound for proximal SGD that generalizes \citet[Lemma 4.2]{garrigos2025last} to the proximal setting. The full proof appears in \Cref{subsec: one-it-spgd}.
\begin{lemma}[Per-iteration descent] \label{proxobjectivecomb_main}
    Let \Cref{assumptions} hold. In \eqref{eq:intro-spgd}, pick $\tau$ such that $\tau < 1/(2L)$.
    Then for all $t\in[0, T]$ and $z_t$ in $\mathcal{F}(x_0, ..., x_t)$ it holds that
    \begin{equation*}
        \begin{aligned}
            &\mathbb{E}_t\left[
            h(x_{t+1})
            - h(z_t)
            - a h(x_t)
            + a h^*
            \right]
            &\leq
            \frac{1}{2\tau}
            \mathbb{E}_t\left[
            \|x_t - z_t\|^2
            -
            \|x_{t+1} - z_t\|^2
            \right]
            + v,
        \end{aligned}
    \end{equation*}
    where $a = 2 \tau L (1+ \epsilon')$ and $v = \left(1 + \frac{1}{\epsilon'}\right)\sigma_*^2 \tau$ for any $\epsilon' > 0$.
\end{lemma}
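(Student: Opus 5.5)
The plan is to run the standard prox-gradient three-point argument against an arbitrary reference point $z_t$, and then bound the resulting variance term through co-coercivity of each $f_i$ at the composite solution $x^*$. This replaces the cancellation $\nabla f(x^\star)=0$, which is no longer available, by the first-order optimality condition of $h$.

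\textbf{Step 1 (deterministic three-point inequality).} Write $g_t := \nabla f_{i_t}(x_t)$, so that $x_{t+1} = \prox_{\tau g}(x_t - \tau g_t)$. The prox optimality condition says $(x_t - \tau g_t - x_{t+1})/\tau \in \partial g(x_{t+1})$. Together with convexity of $g$, this gives for any $z$
\[
g(x_{t+1}) - g(z) \le \langle g_t, z - x_{t+1}\rangle + \tfrac{1}{2\tau}\bigl(\|x_t - z\|^2 - \|x_{t+1}-z\|^2 - \|x_{t+1}-x_t\|^2\bigr).
\]
Next, $L$-smoothness of $f$ gives $f(x_{t+1}) \le f(x_t) + \langle \nabla f(x_t), x_{t+1}-x_t\rangle + \tfrac L2\|x_{t+1}-x_t\|^2$, and convexity gives $f(x_t) \le f(z) + \langle \nabla f(x_t), x_t - z\rangle$. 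Adding the three inequalities produces $h(x_{t+1}) - h(z)$ on the left. On the right we get the distance difference, the inner product $\langle g_t - \nabla f(x_t), z - x_{t+1}\rangle$, and a leftover $-(\tfrac{1}{2\tau}-\tfrac L2)\|x_{t+1}-x_t\|^2$.

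\textbf{Step 2 (noise term).} Take $z = z_t$, which is $\mathcal F_t$-measurable. Split $z_t - x_{t+1} = (z_t - x_t) + (x_t - x_{t+1})$. The first piece has zero conditional mean. For the second piece, Young's inequality gives
\[
\langle g_t - \nabla f(x_t), x_t - x_{t+1}\rangle \le \tfrac{\tau}{2}\cdot c\,\|g_t - \nabla f(x_t)\|^2 + \tfrac{1}{2\tau c}\|x_{t+1}-x_t\|^2,
\]
with $c$ chosen so that the second term is absorbed by the leftover negative term; here $\tau<1/(2L)$ gives enough slack, e.g.\ $c=2$. Alternatively, one can compare $x_{t+1}$ with the deterministic step $\bar x_{t+1}=\prox_{\tau g}(x_t-\tau\nabla f(x_t))$ and use nonexpansiveness. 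In either case the target is a bound of order $\tau\,\E_t\|g_t\|^2$ (after $\E_t\|g_t - \nabla f(x_t)\|^2 \le \E_t\|g_t\|^2$), possibly with the constant tuned so that the final $a$ matches $2\tau L(1+\epsilon')$.

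\textbf{Step 3 (variance via co-coercivity at $x^*$; the main obstacle).} Apply \eqref{eq:cocoercivity} with $y=x^*$ and Young's inequality with parameter $\epsilon'$:
\[
\|g_t\|^2 \le (1+\tfrac1{\epsilon'})\|\nabla f_{i_t}(x^*)\|^2 + (1+\epsilon')2L\bigl(f_{i_t}(x_t)-f_{i_t}(x^*)-\langle\nabla f_{i_t}(x^*),x_t-x^*\rangle\bigr).
\]
Taking $\E_t$ yields $(1+\frac1{\epsilon'})\sigma_*^2 + 2L(1+\epsilon')D_f(x_t,x^*)$, where $D_f$ is the Bregman divergence of $f$. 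Now $\nabla f(x^*)\ne0$, which is the key difficulty. To handle it, use $-\nabla f(x^*)\in\partial g(x^*)$, which gives $g(x_t)\ge g(x^*) - \langle\nabla f(x^*),x_t-x^*\rangle$. Hence $D_f(x_t,x^*) \le h(x_t)-h^*$, and multiplying by $\tau$ produces exactly $a(h(x_t)-h^*)$ with $a=2\tau L(1+\epsilon')$ and $v=(1+\frac1{\epsilon'})\sigma_*^2\tau$. The delicate bookkeeping is in Step 2: the variance coefficient must come out as exactly $\tau$ (not $2\tau$). To achieve this, I would first try bounding the full $\E_t\|g_t\|^2$ term with coefficient $\tau$, which the leftover term $-(\frac1{2\tau}-\frac L2)\|x_{t+1}-x_t\|^2$ allows when $\tau<1/(2L)$, rather than the centered variance. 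Rearranging then gives the claimed inequality.
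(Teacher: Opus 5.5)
Your proposal is correct and follows essentially the paper's proof. It uses the same ingredients:
\begin{itemize}
\item the prox optimality (three-point) inequality at $z_t$;
\item the split $z_t - x_{t+1} = (z_t - x_t) + (x_t - x_{t+1})$;
\item Young's inequality absorbed by $-(\frac{1}{2\tau}-\frac{L}{2})\|x_{t+1}-x_t\|^2$, together with the descent lemma for $f$;
\item the variance-transfer bound via co-coercivity at $x^*$ and $-\nabla f(x^*)\in\partial g(x^*)$.
\end{itemize}
The only cosmetic difference is that you use convexity of $f$ plus the zero-mean cross term, while the paper applies convexity of $f_{i_t}$ before taking $\E_t$.

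Your hedging in Steps 2--3 is unnecessary. The choice $c=2$ already gives coefficient exactly $\tau$ on $\E_t\|g_t-\nabla f(x_t)\|^2\le\E_t\|g_t\|^2$. This matches the paper's choice $\epsilon = 1/\tau - L$ (i.e.\ $c = 1/(1-\tau L)$), which gives $\tau/(2(1-\tau L))\le\tau$.
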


Unlike \citet[Lemma 4.2]{garrigos2025last}, the left-hand side depends not only on $x_t$ and $z_t$ but also on $x_{t+1}$. \Cref{theorem41_main} below shows that the technique of \citet{zamani2023exactconvergencerateiterate} accommodates this modification.

We now sketch the proof and highlight the key differences from the unconstrained case.

As discussed in \Cref{sec:starting-point}, the main difficulty is the loss of linearity between $x_{t+1}$ and $x_t$. Instead of the expansion \eqref{eq:sgd-expansion}, we now have
\begin{equation}\label{eq: tru4}
    \begin{aligned}
        \| x_{t+1} - z\|^2 &= \| x_t - z\|^2 + 2\langle x_{t+1} - x_t, x_t -z \rangle + \| x_{t+1} - x_t\|^2,
    \end{aligned}
\end{equation}
which is the three-point identity for the squared norm. The second term on the right-hand side requires a bound.

The standard analysis of proximal SGD relies on the proximal inequality, which follows from the definition of the proximal operator and convexity of $g$:
\begin{equation}\label{eq: psgd_prox_ineq}
    \begin{aligned}
        &\langle x_{t+1} - x_t + \tau \nabla f_{i_t}(x_t), z - x_{t+1} \rangle \geq \tau(g(x_{t+1})- g(z)).
    \end{aligned}
\end{equation}
Applying this inequality to the second term in \eqref{eq: tru4} after adding and subtracting $\langle x_{t+1} - x_t, x_{t+1} \rangle$ yields
\begin{equation}\label{eq: psgd_comb}
    \begin{aligned}
        &\|x_{t+1} - z\|^2 - \|x_t - z\|^2 \leq - \|x_{t+1} - x_t\|^2 + 2\tau (g(z) - g(x_{t + 1})) + 2\tau \langle \nabla f_{i_t}(x_t), z - x_{t + 1}\rangle.
    \end{aligned}
\end{equation}
The error term $\langle \nabla f_{i_t}(x_t), z-x_{t+1} \rangle$ resembles the corresponding term in \eqref{eq:sgd-expansion}, but with $x_t$ in the gradient and $x_{t+1}$ in the inner product. This mismatch means we cannot directly apply  convexity. Additional estimation via smoothness is required; see \cref{subsec: one-it-spgd}. Setting $z=z_t$, as defined below, completes the proof.

\subsubsection{Last-iterate reduction}\label{subsubsec:last-iterate-reduction}
The remainder of the analysis utilizes ideas from \citet{zamani2023exactconvergencerateiterate}, by also using \citet{garrigos2025last}. The next lemma is stated abstractly because it also applies to the incremental proximal method in \cref{sec: rand_inc_prox}.

Following these works, we define the auxiliary sequence $z_t=(1-p_t)x_t+p_t z_{t-1}$ for $t\geq 0$ with $p_0=1$ and, for $t\ge1$,
\[
p_t=\frac{a+T-t+1}{T-t+2}, \text{~along with~} z_{-1} = x^\ast.
\]
The key insight of \citet{zamani2023exactconvergencerateiterate} is that this convex combination, with $z_{-1} = x^*$, leads us to last-iterate guarantees. Note that $p_t \in [0,1]$ because $a < 1$, which holds when $\tau < \frac{1}{2L}$ which is enforced in our step size rule. We now apply this idea to our setting; the proof appears in \Cref{subsec: last_it_spgd}.
\begin{lemma}[Last iterate reduction] \label{theorem41_main}
    Let $h$ be convex. Suppose that for all $t \in [0,T]$, the inequality
    \begin{equation}
        \begin{aligned}
            &h(x_{t+1}) - h(z_t) - ah(x_t) + ah^* \leq \frac{1}{2\tau}\mathbb{E}[\|x_t-z_t\|^2 - \|x_{t+1} - z_t\|^2] + \mathcal{E}
        \end{aligned}
    \end{equation}
    holds for some constant $\mathcal{E}$. Then
    \begin{align}
        \alpha_T\E\left[h(x_{T+1}) - h^* \right] &\leq \frac{1}{2 \tau} \|x_0 - x^*\|^2 + h(x_0) - h^* + \mathcal{E}\left(\sum_{t=1}^{T} \alpha_t+\alpha_0 \right),\label{bigTheorem_main}
    \end{align}
    where the sequence $(\alpha_t)$ is defined by $\alpha_{-1} = \alpha_0 = 1$ and
    \[
    \alpha_t
    =
    \frac{T - t + 2}{a + T - t + 1} \cdot \alpha_{t-1},
    \qquad t = 1,\dots,T.
    \]
\end{lemma}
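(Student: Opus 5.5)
The plan is to follow the Zamani–Glineur weighting argument. I would multiply the assumed one-step inequality at iteration $t$ by $\alpha_t$, take full expectations, and sum over $t=0,\dots,T$. The weights are tuned so that two things happen at once: the distance terms telescope, and the function-value terms collapse to $\alpha_T\,\E[h(x_{T+1})-h^*]$ minus a multiple of $h(x_0)-h^*$.

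Throughout I write $e_t=h(x_t)-h^*$ and $w_t=h(z_t)-h^*$. The recursion for $\alpha_t$ gives $\alpha_{t-1}=p_t\alpha_t$ for $t\ge 1$. Since $a<1$, we have $p_t\in[0,1]$. The random points $z_t$ enter only through pointwise convexity, so taking expectations at the end causes no trouble.

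\textbf{Distance terms.} Since $z_t-x_t=p_t(z_{t-1}-x_t)$, we get $\|x_t-z_t\|^2=p_t^2\|x_t-z_{t-1}\|^2$ for $t\ge1$. The negative term from step $t-1$ is $-\alpha_{t-1}\|x_t-z_{t-1}\|^2$, and the positive term from step $t$ is $\alpha_t p_t^2\|x_t-z_{t-1}\|^2=\alpha_{t-1}p_t\|x_t-z_{t-1}\|^2$. Their sum is nonpositive because $p_t\le1$. Hence all interior distance terms can be dropped, and so can the final $-\alpha_T\|x_{T+1}-z_T\|^2$. For $t=0$ we have $p_0=1$, so $z_0=z_{-1}=x^*$, and the only surviving term is $\frac{1}{2\tau}\|x_0-x^*\|^2$ with $\alpha_0=1$. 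Choosing $x^*$ as the projection of $x_0$ onto $X^*$ gives the first term of \eqref{bigTheorem_main}.

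\textbf{Function-value terms.} This is the main bookkeeping step. By convexity, $w_t\le(1-p_t)e_t+p_tw_{t-1}$, with $w_{-1}=0$. Unrolling this and using $\alpha_t\prod_{r=s+1}^t p_r=\alpha_s$ gives
\[
\alpha_t w_t\le \sum_{s=1}^{t}\alpha_s(1-p_s)e_s .
\]
The $s=0$ term vanishes because $p_0=1$. Summing over $t=0,\dots,T$ yields
\[
\sum_t\alpha_tw_t\le\sum_{s=1}^T(T-s+1)\,\alpha_s(1-p_s)\,e_s .
\]
Collecting coefficients in $\sum_t\alpha_t(e_{t+1}-w_t-ae_t)$ then gives the following.
\begin{itemize}
\item For $1\le s\le T$, the coefficient of $e_s$ is at least $\alpha_{s-1}-a\alpha_s-(T-s+1)\alpha_s(1-p_s)=\alpha_s\bigl[p_s(T-s+2)-a-(T-s+1)\bigr]$. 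This equals $0$ exactly by the definition of $p_s$.
\item The coefficient of $e_{T+1}$ is $\alpha_T$.
\item The coefficient of $e_0$ is $-a\alpha_0=-a$. Since $e_0\ge0$ and $a<1$, this is bounded below by $-e_0$.
\end{itemize}
The $ah^*$ terms were already absorbed into writing everything relative to $h^*$.

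\textbf{Conclusion and main obstacle.} Combining the two parts gives
\[
\alpha_T\E e_{T+1}\le\tfrac1{2\tau}\|x_0-x^*\|^2+e_0+\mathcal E\sum_{t=0}^T\alpha_t,
\]
which is \eqref{bigTheorem_main}. The step I expect to be the main obstacle is the function-value accounting: unrolling the $w_t$ recursion and checking that the coefficient of each intermediate $e_s$ vanishes. In particular, care is needed with the index conventions at $t=0$ ($p_0=1$, $z_{-1}=x^*$) and with signs, since the argument must not rely on $e_s\ge0$ for intermediate iterates beyond what the exact cancellation gives.
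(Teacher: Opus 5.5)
Your proposal is correct and follows essentially the same route as the paper. Like the paper, you weight the one-step bound by $\alpha_t$, telescope the distance terms using $\alpha_t p_t^2 = \alpha_{t-1}p_t \le \alpha_{t-1}$, bound $h(z_t)$ by convexity, swap the double sum, and observe that the coefficient of every intermediate $e_s$ vanishes by the choice of $p_s$, leaving $\alpha_T e_{T+1} - a e_0$. The only cosmetic difference is that you unroll the one-step convexity inequality $w_t \le (1-p_t)e_t + p_t w_{t-1}$. The paper instead first writes $z_t$ as an explicit convex combination of $x_0,\dots,x_t,x^*$ and applies Jensen once; both give the identical bound $\alpha_t w_t \le \sum_{s\le t}(\alpha_s-\alpha_{s-1})e_s$.
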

From the definition of $\alpha_t$, we have $\alpha_t p_t = \alpha_{t-1}$ for all $t \ge 0$, so $\alpha_t$ is nondecreasing. The convergence rate depends on the growth of $\alpha_t$, which is analyzed in App. \ref{subsec: alpha-bds} using ideas from \citet{garrigos2025last,zamani2023exactconvergencerateiterate}.

Unlike \citet{garrigos2025last}, our recursion involves both $h(x_{t+1})$ and $h(x_t)$ due to the existence of the proximal operator in the method. We show that the technique of \citet{zamani2023exactconvergencerateiterate} extend naturally to this setting.

Combine \Cref{proxobjectivecomb_main,theorem41_main} with the bounds on $\alpha_t$ from \Cref{subsec: alpha-bds} to obtain \cref{BIGTHEOREM_main}.

\subsection{Corollary for Projected SGD}

\Cref{BIGTHEOREM_main} specializes to projected SGD over a closed convex set $C\subset\R^n$ by setting $g = \delta_C$, where the indicator function $\delta_C(x) = 0$ if $x \in C$ and $\delta_C(x) = +\infty$ otherwise. Since $\delta_C$ is proper, convex, and lower semicontinuous, \cref{assumptions} holds. The problem becomes
\begin{equation*}
    \min_{x \in C} f(x),
\end{equation*}
and the iteration \eqref{eq:intro-spgd} reduces to
\begin{equation}\label{eq:proj_sgd}
    x_{t+1} = \proj_C(x_t - \tau \nabla f_{i_t}(x_t)), \tag{projSGD}
\end{equation}
where $\proj_C(x) = \argmin_{y \in C} \|y - x\|$ is the Euclidean projection onto $C$.

\begin{corollary}[Projected SGD]
    Let \Cref{assumptions} hold for $f$, and let $C$ be convex and closed. For \eqref{eq:proj_sgd}, let $\tau = 1/(3 L \sqrt{T})$. Then,
    \begin{align*}
        \E\left[f(x_{T+1}) - f^* \right] =O\left( \frac{\ln (T+1)}{\sqrt{T}} \right).
    \end{align*}
\end{corollary}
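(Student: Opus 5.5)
The corollary is a direct specialization of \Cref{BIGTHEOREM_main}, so the plan is to verify that \cref{assumptions} holds for the composite problem with $g=\delta_C$, identify \eqref{eq:proj_sgd} with \eqref{eq:intro-spgd}, and read off the rate.

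\textbf{Step 1: the iteration is \eqref{eq:intro-spgd}.} For any $\tau>0$ we have $\tau\delta_C=\delta_C$. Hence
\[
\prox_{\tau\delta_C}(y)=\argmin_z\{\delta_C(z)+\tfrac12\|y-z\|^2\}=\proj_C(y),
\]
which is well defined and single-valued because $C$ is nonempty, closed and convex. So \eqref{eq:proj_sgd} is exactly \eqref{eq:intro-spgd} with $g=\delta_C$.

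\textbf{Step 2: the assumptions transfer.} Item 3 of \cref{assumptions} holds because $\delta_C$ is proper (as $C\neq\emptyset$), convex and lower semicontinuous (as $C$ is closed). Item 2 concerns only the $f_i$ and is assumed. For item 1, I read ``\cref{assumptions} hold for $f$'' as requiring that $\argmin_C f$ be nonempty and that $\E\|\nabla f_i(x^*)\|^2=\sigma_*^2<\infty$ at some constrained minimizer $x^*$. This is the one point needing care: a bound on the second moment at an unconstrained minimizer of $f$ is not the relevant quantity, since in general $\nabla f(x^*)\neq0$ at constrained minimizers.

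\textbf{Step 3: read off the bound.} With $h=f+\delta_C$ we have $h^*=f^*:=\min_C f$, and $h(x_{T+1})=f(x_{T+1})$ because every iterate lies in $C$ after projection. The only obstacle is $x_0$: the term $h(x_0)-h^*$ in \Cref{BIGTHEOREM_main} is infinite if $x_0\notin C$. I would handle this by taking $x_0\in C$, or equivalently by replacing $x_0$ with $x_1$ and shifting indices, since $x_1\in C$ and then $h(x_1)<\infty$. \Cref{BIGTHEOREM_main} with $\tau=1/(3L\sqrt T)$ then gives
\[
\E[f(x_{T+1})-f^*]\le \frac{9}{\sqrt T}\Big[LD_*^2+\frac{f(x_0)-f^*}{\sqrt T}+\frac{\sigma_*^2}{L}\Big(\frac1T+4\ln(T+1)\Big)\Big].
\]
Treating $L$, $D_*$, $\sigma_*$ and $f(x_0)-f^*$ as constants, the right-hand side is $O(\ln(T+1)/\sqrt T)$, which is the claim.
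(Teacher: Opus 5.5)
Your proposal is correct and is exactly the specialization the paper intends. The paper omits the proof as a special case of \Cref{BIGTHEOREM_main} with $g=\delta_C$. Your checks make explicit what the paper leaves implicit: that $\prox_{\tau\delta_C}=\proj_C$, that $\sigma_*^2$ must be measured at a constrained minimizer, and that $h(x_0)-h^*<\infty$ forces $x_0\in C$. One small point is that the index-shift alternative is not literally ``equivalent''. Restarting at the random point $x_1$ changes the horizon/step-size pairing, so you would have to invoke \Cref{BIGTHEOREM} with a slightly different $C$ and with $\E\|x_1-x^*\|^2$ in place of $D_*^2$. Taking $x_0\in C$ is the clean route.
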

We emphasize that the results of the form above for the last iterate under \Cref{assumptions} were only known for SGD for unconstrained problems prior to our work. Since the proof is a special case of \Cref{BIGTHEOREM_main}, it is omitted.

\section{Randomized Incremental Proximal Method}\label{sec: rand_inc_prox}
This section extends our analysis to additive regularizers of the form $g = \sum_{i=1}^m g_i$, where we access only the proximal operators of the component functions $g_i$. The standard algorithm for this setting is the randomized incremental proximal method~\eqref{eq:intro-incrementalpgd}, introduced by \citet{bertsekas2011incremental}.

Although this setting generalizes~\eqref{eq:intro-spgd}, the analysis requires an additional assumption: each $g_i$ must be Lipschitz continuous (\Cref{assumptions2_main}). This assumption is standard when proximal operators are accessed stochastically~\citep{bertsekas2011incremental}.

We state the main result below, followed by key proof ideas and a corollary. We then examine why this Lipschitz assumption is necessary compared to the setting of \Cref{sec: proxsgd}. Complete proofs appear in \Cref{app: ripm}.

\subsection{Statement of the result}

The following theorem establishes a last-iterate convergence rate for \eqref{eq:intro-incrementalpgd}. The proof appears in \cref{subsec: incr_mainthm_proof}.
\begin{theorem}[Last-iterate convergence, incremental proximal]\label{BIGTHEOREM2_main}
    Let \cref{assumptions,assumptions2_main} hold. For \eqref{eq:intro-incrementalpgd}, set $\tau = 1/(5L\sqrt{T})$. Then
    \begin{align*}
        \E[h(x_{T+1}) - h^*] &\leq \frac{10}{\sqrt{T}} \Big[L D_*^2 + \frac{1}{\sqrt{T}}(h(x_0) - h^*)  + \frac{\sigma_*^2 + 4m^2L_g^2}{L}\Big(\frac{1}{T}+4\ln(T+1) \Big) \Big].
    \end{align*}
\end{theorem}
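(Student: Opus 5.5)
The plan is to reuse the two-stage architecture of \cref{BIGTHEOREM_main}. First we establish a one-iteration inequality of exactly the form required by \Cref{theorem41_main}, with an enlarged error constant $\mathcal{E}$ that absorbs the proximal stochasticity. Then we invoke \Cref{theorem41_main} together with the bounds on $(\alpha_t)$ from \Cref{subsec: alpha-bds}. Concretely, the target is the following analogue of \Cref{proxobjectivecomb_main}: for $\tau<1/(4L)$ (or some similar constant) and any $z_t\in\mathcal{F}(x_0,\dots,x_t)$,
\[
\E_t\bigl[h(x_{t+1}) - h(z_t) - a h(x_t) + a h^*\bigr] \le \frac{1}{2\tau}\E_t\bigl[\|x_t-z_t\|^2-\|x_{t+1}-z_t\|^2\bigr] + v,
\]
with $a = c\,\tau L$ and $v = O\bigl(\tau(\sigma_*^2 + m^2L_g^2)\bigr)$. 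Once this holds, taking total expectations, applying \Cref{theorem41_main}, and substituting $\tau = 1/(5L\sqrt{T})$ gives the stated bound. The constant $5$ (in place of $3$) and the term $4m^2L_g^2$ are exactly the bookkeeping changes expected from the larger $a$ and $v$.

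For the one-step bound we start from the prox inequality for $\prox_{\tau m g_{j_t}}$:
\[
\langle x_{t+1}-x_t+\tau\nabla f_{i_t}(x_t), z-x_{t+1}\rangle \ge \tau m\bigl(g_{j_t}(x_{t+1})-g_{j_t}(z)\bigr).
\]
Combined with the three-point identity \eqref{eq: tru4}, this gives the analogue of \eqref{eq: psgd_comb} with $g$ replaced by $m g_{j_t}$.

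The difficulty is that $x_{t+1}$ depends on $j_t$, so $\E_t[m g_{j_t}(x_{t+1})]\neq g(x_{t+1})$. To fix this we introduce the intermediate point $y_t = x_t-\tau\nabla f_{i_t}(x_t)$, which is independent of $j_t$. Since $z$ is also independent of $j_t$, we have $\E_{j_t}[m g_{j_t}(z)]=g(z)$. We then write
\[
m g_{j_t}(x_{t+1}) \ge m g_{j_t}(y_t) - mL_g\|x_{t+1}-y_t\|,
\]
and note that $\E_{j_t}[m g_{j_t}(y_t)] = g(y_t)$. Finally we pass from $g(y_t)$ back to $g(x_{t+1})$, using $g(x_{t+1})\le g(y_t)+mL_g\|x_{t+1}-y_t\|$ since $g$ is $mL_g$-Lipschitz. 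The prox step displacement satisfies $\|x_{t+1}-y_t\|\le \tau m L_g$, because the prox subgradient of $\tau m g_{j_t}$ has norm at most $\tau m L_g$. Together these give an error of order $2\tau^2m^2L_g^2$, which after dividing by $2\tau$ contributes $O(\tau m^2L_g^2)$ to $v$.

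The smooth part is handled as in \Cref{proxobjectivecomb_main}. We split
\[
\langle \nabla f_{i_t}(x_t), z-x_{t+1}\rangle = \langle \nabla f_{i_t}(x_t), z-x_t\rangle + \langle \nabla f_{i_t}(x_t), x_t-x_{t+1}\rangle .
\]
The first piece is handled by convexity. The second is handled by $L$-smoothness of $f$, with the leftover $-\|x_{t+1}-x_t\|^2$ term used to absorb the Young remainder. The gradient variance is then controlled as in \eqref{eq:variance-bound}. Here the linear term $\E\langle\nabla f_i(x^*),x_t-x^*\rangle$ no longer vanishes, since $\nabla f(x^*)\neq0$. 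Instead, by optimality $-\nabla f(x^*)\in\partial g(x^*)$, it combines with $g$ to give $h(x_t)-h^*\ge f(x_t)-f(x^*)-\langle\nabla f(x^*),x_t-x^*\rangle$. This yields the $a\,(h(x_t)-h^*)$ term, at the cost of $\sigma_*^2$ in $v$.

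The main obstacle is the decoupling of $j_t$ from $x_{t+1}$. The mismatch between $x_t$, $y_t$ and $x_{t+1}$ produces several cross terms, each needing either a Young split against $\|x_{t+1}-x_t\|^2$ or the Lipschitz bound. The constants there determine $a$ and the restriction $\tau\le 1/(5L\sqrt T)$. We also need $a<1$ so that $p_t\in[0,1]$ in \Cref{theorem41_main}. If the direct approach leaves a term like $\|x_{t+1}-x_t\|\,\|\nabla f_{i_t}(x_t)\|$ uncontrolled, we will first bound $\|x_{t+1}-x_t\|\le\tau\|\nabla f_{i_t}(x_t)\|+\tau mL_g$, and then apply $(u+w)^2\le 2u^2+2w^2$. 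This feeds one more copy of the variance bound into $a$, which is consistent with the factor $5$.
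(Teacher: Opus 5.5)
Your argument is correct and has the same overall architecture as the paper's: a one-step inequality of the form required by \Cref{theorem41_main}, followed by the $\alpha_t$ bounds. The difference is in how you decouple $j_t$ from $x_{t+1}$.

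The paper decouples at $x_t$. It uses $m g_{j_t}(x_{t+1})\ge m g_{j_t}(x_t)-mL_g\|x_{t+1}-x_t\|$, averages over $j_t$, and returns from $g(x_t)$ to $g(x_{t+1})$ via the $mL_g$-Lipschitzness of $g$. This leaves a random error $4\tau mL_g\|x_t-x_{t+1}\|$. It is absorbed by Young's inequality against a quarter of the $-\|x_{t+1}-x_t\|^2$ term, taking $\epsilon=3/(4\tau)-L$. That step is what forces $\tau L<1/4$, hence $C>4$ and the choice $C=5$. It produces $\mathcal{E}=v+8\tau m^2L_g^2$ with $a=2\tau L(1+\epsilon')$ unchanged.

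You decouple at the pre-prox point $y_t$ instead, which lets you use the deterministic bound $\|x_{t+1}-y_t\|\le\tau mL_g$. This buys you three things:
\begin{itemize}
\item the extra error is only $2\tau m^2L_g^2$;
\item the quadratic term is needed only for the smooth part, with $\epsilon=1/\tau-L$ exactly as in \Cref{proxobjectivecomb_main};
\item the one-step bound holds under the weaker condition $\tau L<1/2$.
\end{itemize}
With $\tau=1/(5L\sqrt T)$ and $T^a\le e^{8/(5e)}\approx 1.8$, you recover the stated bound with room to spare.

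The price is a slightly stronger sampling assumption. Your identity $\E_{j_t}[m g_{j_t}(y_t)]=g(y_t)$ needs $j_t$ to be uniform conditionally on $i_t$ as well as on $\mathcal{F}_t$, i.e. independent sampling of $i_t$ and $j_t$. You should state this explicitly. The paper's decoupling only uses the conditional law of $j_t$ given $\mathcal{F}_t$.

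Two small corrections. First, the factor $5$ does not come from a larger $a$: in both routes $a\le 4\tau L$. Second, the fallback in your last paragraph is unnecessary and counterproductive. Feeding extra variance into $a$ enlarges $T^a$, and the constant $10$ in front of $LD_*^2/\sqrt T$ relies on $a\le 4\tau L$ through $5e^{8/(5e)}<10$.
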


This result extends \cref{BIGTHEOREM_main} to stochastic access to the regularizer $g$, and requires only the individual proximal operators $\prox_{g_j}$ rather than $\prox_g$. The generalization incurs an additional variance term $4m^2L_g^2$ in the bound and the Lipschitz requirement on each $g_j$; see \cref{assumptions2_main}. As in \cref{BIGTHEOREM_main}, the smooth component $f$ does not require a finite-sum structure.

The most closely related result is \citet{bertsekas2011incremental}, who assumed uniformly bounded gradients $\|\nabla f_i(x)\|$ and proved only asymptotic convergence without explicit rates.

\subsection{Proof setup}\label{sec: main_ideas_incr}

We sketch the proof and justify the Lipschitz assumption on $g_j$, following the framework of \cref{subsec: main_ideas_psgd}.

Instead of \eqref{eq: psgd_prox_ineq}, we now have
\begin{equation}\label{eq: ipgd_prox_ineq}
    \begin{aligned}
        &\langle x_{t+1} - x_t + \tau \nabla f_{i_t}(x_t), z - x_{t+1} \rangle \geq \tau m(g_{j_t}(x_{t+1})- g_{j_t}(z)).
    \end{aligned}
\end{equation}
The aim is to combine this inequality with \eqref{eq: tru4} to obtain a recursion analogous to \eqref{eq: psgd_comb}. The main difference from \cref{subsec: main_ideas_psgd} is that we have $m(g_{j_t}(x_{t+1}) - g_{j_t}(z))$ instead of $g(x_{t+1}) - g(z)$. The difficulty becomes apparent:
\begin{align*}
    \mathbb{E}_t[m(g_{j_t}(x_{t+1}) - g_{j_t}(z))] \neq g(x_{t+1}) - g(z).
\end{align*}
The iterate $x_{t+1}$ depends on $j_t$, so $g_{j_t}$ and $x_{t+1}$ are coupled and the conditional expectation does not factor; see~\eqref{eq:intro-incrementalpgd}. The resolution is to decouple them using Lipschitzness:
\begin{align*}
    mg_{j_t}(x_{t+1}) &= m g_{j_t}(x_t) + m(g_{j_t}(x_{t+1}) - g_{j_t}(x_{t})) \\
    &\leq m g_{j_t}(x_t) + mL_g\|x_{t+1} - x_t\|.
\end{align*}
With $g_{j_t}$ evaluated at $x_t$, which is $\mathcal{F}_t$-measurable, we have $\mathbb{E}_t[mg_{j_t}(x_t)] = g(x_t)$. A second application of Lipschitzness then recovers $g(x_{t+1})$. This is the standard argument for handling such coupling \citep{bertsekas2011incremental}.

This decoupling introduces additional error terms on the right-hand side of the one-iteration bound, compared to \cref{proxobjectivecomb_main}. See App. \ref{subsec: one-it-proof-incr} for the proof of the next lemma.
\begin{lemma}[Per-iteration descent, incremental proximal] \label{blockproxlinearcomb_main}
    Let \cref{assumptions,assumptions2_main} hold. For \eqref{eq:intro-incrementalpgd}, set $\tau < 1/(4L)$.
    Then for all $t = 0, \ldots, T$ and $z_t \in \mathcal{F}(x_0, \ldots, x_t)$ it holds that
    \begin{equation*}
        \begin{aligned}
            &\mathbb{E}_t\left[
            h(x_{t+1})
            - h(z_t)
            - a h(x_t)
            + a h^*
            \right]
            &\leq
            \frac{1}{2\tau}
            \mathbb{E}_t\left[
            \|x_t - z_t\|^2
            -
            \|x_{t+1} - z_t\|^2
            \right]
            + v + 8 \tau m^2 L_g^2,
        \end{aligned}
    \end{equation*}
    where $a = 2 \tau L (1+ \epsilon')$ and $v = \left(1 + \frac{1}{\epsilon'}\right)\sigma_*^2 \tau$ for any $\epsilon' > 0$.
\end{lemma}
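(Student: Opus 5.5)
We follow the route of \Cref{proxobjectivecomb_main}, with one extra decoupling step for the random prox. Fix $z=z_t$, which is $\mathcal{F}_t$-measurable, and write $\hat g := m g_{j_t}$. Combining the three-point identity \eqref{eq: tru4} with the prox inequality \eqref{eq: ipgd_prox_ineq} gives, exactly as in \eqref{eq: psgd_comb},
\begin{equation*}
\|x_{t+1}-z\|^2-\|x_t-z\|^2 \le -\|x_{t+1}-x_t\|^2 + 2\tau\big(\hat g(z)-\hat g(x_{t+1})\big) + 2\tau\langle \nabla f_{i_t}(x_t), z-x_{t+1}\rangle .
\end{equation*}

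\textbf{Smooth term.} We treat the inner product as in the SPGD lemma. Split $z-x_{t+1}=(z-x_t)+(x_t-x_{t+1})$. Convexity of $f_{i_t}$ gives $\langle\nabla f_{i_t}(x_t),z-x_t\rangle\le f_{i_t}(z)-f_{i_t}(x_t)$. For the mismatch term, $L$-smoothness of $f$ gives
\begin{equation*}
\langle \nabla f(x_t),x_t-x_{t+1}\rangle \le f(x_t)-f(x_{t+1})+\tfrac L2\|x_{t+1}-x_t\|^2 .
\end{equation*}
The noise part $\langle \nabla f_{i_t}(x_t)-\nabla f(x_t), x_t-x_{t+1}\rangle$ is handled by Young's inequality, at the cost of a fraction of $\|x_{t+1}-x_t\|^2$ plus $c\tau^2\|\nabla f_{i_t}(x_t)-\nabla f(x_t)\|^2$. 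The resulting variance is then bounded as in \eqref{eq:variance-bound}, using co-coercivity at $x^*$. Here $\nabla f(x^*)\neq0$, so we anchor with the Bregman divergence $D_{f_i}(x_t,x^*)$, whose expectation is $f(x_t)-f(x^*)-\langle\nabla f(x^*),x_t-x^*\rangle$. Since $-\nabla f(x^*)\in\partial g(x^*)$, this is at most $h(x_t)-h^*$. This is exactly what produces the $a h(x_t)-ah^*$ term with $a=2\tau L(1+\epsilon')$ and the residual $v=(1+1/\epsilon')\sigma_*^2\tau$; the $\epsilon'$ split is Young's inequality applied to $\nabla f_i(x_t)=(\nabla f_i(x_t)-\nabla f_i(x^*))+\nabla f_i(x^*)$. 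We take this whole block verbatim from the proof of \Cref{proxobjectivecomb_main}, now carrying the $-\|x_{t+1}-x_t\|^2$ budget. In that proof only part of the budget was needed. Because we tighten to $\tau<1/(4L)$, part of $\|x_{t+1}-x_t\|^2$ stays free for the next step.

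\textbf{Regularizer term: the main obstacle.} We must convert $\E_t[\hat g(z)-\hat g(x_{t+1})]$ into $g(z)-g(x_{t+1})$ plus error. First, $\E_t\hat g(z)=g(z)$ because $z$ is $\mathcal F_t$-measurable. Next, decouple as in the text:
\begin{equation*}
-\hat g(x_{t+1}) \le -\hat g(x_t)+mL_g\|x_{t+1}-x_t\| .
\end{equation*}
Here $\E_t\hat g(x_t)=g(x_t)$. Then $-g(x_t)\le -g(x_{t+1})+mL_g\|x_{t+1}-x_t\|$, since $g$ is $mL_g$-Lipschitz. In total this costs $2\tau\cdot 2mL_g\|x_{t+1}-x_t\|$. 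Young's inequality bounds it by
\begin{equation*}
4\tau mL_g\|x_{t+1}-x_t\| \le \tfrac14\|x_{t+1}-x_t\|^2+16\tau^2m^2L_g^2 ,
\end{equation*}
and the quadratic part is absorbed by the leftover $-\|x_{t+1}-x_t\|^2$. The delicate point is the bookkeeping: the smoothness term $\tau L\|x_{t+1}-x_t\|^2$, the noise Young term, and this new Young term must all fit inside the single $-\|x_{t+1}-x_t\|^2$. This is why the step-size restriction tightens from $1/(2L)$ to $1/(4L)$, and I would tune the Young constants to make it close.

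\textbf{Assembly.} Taking $\E_t$, collecting terms and dividing by $2\tau$ gives
\begin{equation*}
\E_t\big[h(x_{t+1})-h(z_t)-ah(x_t)+ah^*\big] \le \tfrac1{2\tau}\E_t\big[\|x_t-z_t\|^2-\|x_{t+1}-z_t\|^2\big]+v+8\tau m^2L_g^2 ,
\end{equation*}
since $16\tau^2m^2L_g^2/(2\tau)=8\tau m^2L_g^2$. This is the claimed bound.
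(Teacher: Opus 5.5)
Your proposal is correct and follows the paper's proof essentially step for step: the same one-step inequality with $m g_{j_t}$ in place of $g$, and the same two-stage Lipschitz decoupling costing $4\tau m L_g\|x_{t+1}-x_t\|$. It also uses the same Young split $\tfrac14\|x_{t+1}-x_t\|^2+16\tau^2m^2L_g^2$, whose quadratic part is absorbed into the leftover $-\|x_{t+1}-x_t\|^2$. The constant-tuning you left open is closed in the paper by choosing the noise Young parameter with $\tau\epsilon=\tfrac34-\tau L$, which leaves exactly $\tfrac14$ of that budget free and gives $\tau/\epsilon=4\tau^2/(3-4\tau L)\le 2\tau^2$ because $\tau L<\tfrac14$.
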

Applying \cref{theorem41_main} with $\mathcal{E} = v + 8\tau m^2 L_g^2$ completes the proof of \cref{BIGTHEOREM2_main}.

\subsection{Corollary for Stochastic Proximal Point Method}

The stochastic proximal point (SPP) method applies to \eqref{eq:problem} when $f\equiv 0$. Given an objective $g(x) := \mathbb{E}_{j\sim D}[g_j(x)]$ for some distribution $D$, the method generates iterates
\begin{align*}
    x_{t+1} = \prox_{\tau g_{j_t}}(x_t),
\end{align*}
where $j_t$ is drawn i.i.d.\ from $D$.

Though well-studied, we are not aware of an optimal last-iterate rate guarantee for this method under mere convexity. The following corollary, which is an immediate consequence of \Cref{BIGTHEOREM2_main}, provides such a bound.
\begin{corollary}[Stochastic proximal point]
    Let \Cref{assumptions2_main} hold. For \ref{eq:intro-incrementalpgd}, set $\tau = 1/(5L\sqrt{T})$.
    Then, we have
    \begin{align*}
        \E[g(x_{T+1}) - g(x^*)] =O\left( \frac{\ln(T+1)}{\sqrt{T}} \right).
    \end{align*}
\end{corollary}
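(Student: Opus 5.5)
The plan is to derive the corollary from \Cref{BIGTHEOREM2_main} by specializing to $f \equiv 0$, so that $h = g$. I will take each smooth component to be $f_i \equiv 0$. These components are trivially convex, differentiable, and $L$-smooth for every $L>0$. With this choice $\nabla f_{i_t}(x_t) = 0$, and \eqref{eq:intro-incrementalpgd} reduces to $x_{t+1} = \prox_{\tau m g_{j_t}}(x_t)$.

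The objective in the corollary is written as $g = \E_{j\sim D}[g_j]$, whereas the theorem uses $g = \sum_{j=1}^m g_j$ with $j_t$ uniform. For uniform $D$ over $m$ components I will rescale: set $\tilde g_j = g_j/m$. Then $\sum_j \tilde g_j = g$, and the RIPM step $\prox_{\tau m \tilde g_{j}} = \prox_{\tau g_j}$ is exactly the SPP step. Each $\tilde g_j$ is $L_g/m$-Lipschitz, so the variance term $4m^2 (L_g/m)^2 = 4L_g^2$ does not depend on $m$.

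The remaining hypotheses are checked as follows. Part~2 of \cref{assumptions} holds trivially. Part~3 follows because $g$ is a finite average of proper convex lsc real-valued Lipschitz functions, hence itself proper, convex, and lsc. For Part~1, nonemptiness of $X^*$ is implicitly assumed in the corollary through the reference to $x^*$. Moreover $\sigma_*^2 = 0$, since every $\nabla f_i$ vanishes.

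Since $L$ is a free parameter in this reduction, I will fix any $L>0$, for instance $L=1$, or whatever value the step size $\tau = 1/(5L\sqrt{T})$ encodes. Plugging into \Cref{BIGTHEOREM2_main} then gives
\[
\E[g(x_{T+1}) - g(x^*)] \le \frac{10}{\sqrt T}\Big[L D_*^2 + \frac{g(x_0)-g^*}{\sqrt T} + \frac{4L_g^2}{L}\Big(\frac1T + 4\ln(T+1)\Big)\Big] = O\Big(\frac{\ln(T+1)}{\sqrt T}\Big),
\]
with constants depending on $L$, $D_*$, $L_g$, and $g(x_0)-g^*$ (finite since $g$ is real-valued).

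The main obstacle is the case of a general distribution $D$ rather than uniform sampling over finitely many components, because \Cref{BIGTHEOREM2_main} is stated only for uniform $j_t$. The only place uniformity enters is the identity $\E_t[m\,g_{j_t}(x_t)] = g(x_t)$ used in the decoupling step of \Cref{blockproxlinearcomb_main}. For a general $D$, I would rerun that lemma with $m g_{j}$ replaced by $g_j$ and $\E_{j\sim D}$ replacing the uniform average. The prox inequality \eqref{eq: ipgd_prox_ineq} and the Lipschitz decoupling go through verbatim and give the error term $8\tau L_g^2$. Then \cref{theorem41_main} applies unchanged and yields the same rate.
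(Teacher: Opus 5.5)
Your proposal is correct and follows the paper's argument, which treats the corollary as an immediate specialization of \Cref{BIGTHEOREM2_main} to $f\equiv 0$: $f_i\equiv 0$ is $L$-smooth for any $L>0$ and gives $\sigma_*^2=0$. Your extra steps are details the paper leaves implicit: the rescaling $\tilde g_j=g_j/m$ that reconciles the averaged objective with the $\prox_{\tau m g_{j_t}}$ step of RIPM, the explicit check of \cref{assumptions} (nonemptiness of $X^*$ must indeed be assumed), and the sketch for a general distribution $D$, which carries through with error term $8\tau L_g^2$.
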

A suboptimal $O(T^{-1/3})$ rate was shown for this method in \citep{toulis2021proximal}.
The other closest prior result is \citet[Section~3.2]{cai2025last}, which analyzes \emph{cyclic} incremental proximal point for finite-sum objectives $\frac{1}{N} \sum_{i=1}^N g_i(x)$. To enable direct comparison, we apply our analysis to the same normalized formulation. Our rate improves upon theirs by a factor of $\sqrt{N}$. We analyze SPP with i.i.d.\@ sampling rather than cyclic selection. Their bound is deterministic whereas ours holds in expectation, making the results complementary.

\begin{figure*}[t]
    \centering
    \begin{subfigure}[t]{0.33\linewidth}
        \centering
        \includegraphics[width=\linewidth]{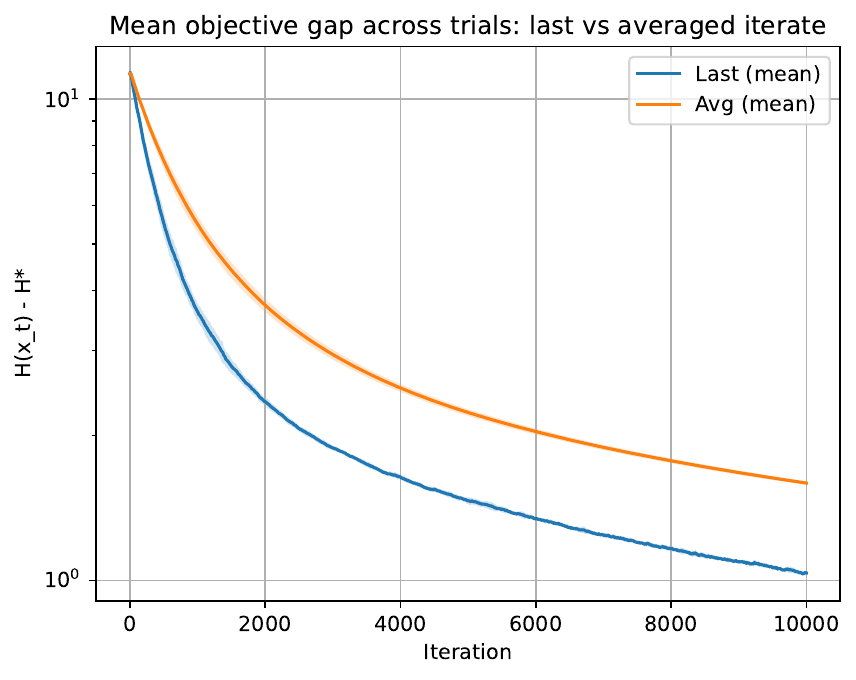}
        \caption{Synthetic Lasso objective gap $H(x_t)-H^*$ (semilog scale).}
        \label{fig:lasso_mean_gap}
    \end{subfigure}
        \begin{subfigure}[t]{0.33\linewidth}
        \centering
        \includegraphics[width=\linewidth]{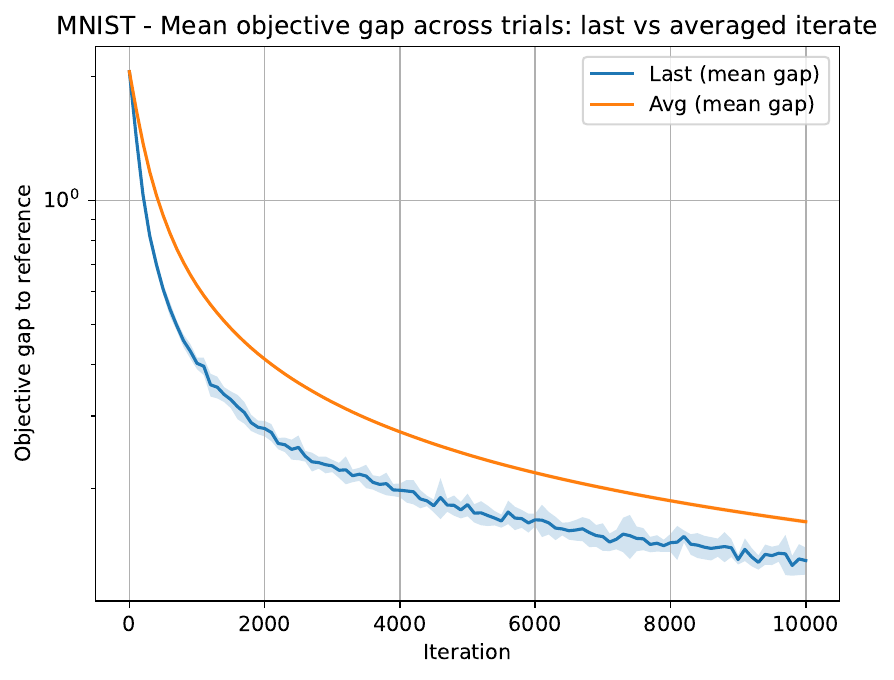}
        \caption{MNIST logistic regression: objective gap to reference solution (semilog scale).}
        \label{fig:mnist_mean_gap}
    \end{subfigure}
    \caption{Comparison of last and averaged iterates.}
    \label{fig:lasso_two_plots}
\end{figure*}

\subsection{Extensions to BlockProx}\label{sec: blockprox}

We now apply the incremental proximal framework to BlockProx, a distributed algorithm for graph-structured regularizers. Consider the optimization problem with partially separable objectives
\begin{equation*}
\min_{x}\ h(x) := \textstyle\sum_{i=1}^N f_i(x_i) + \sum_{j=1}^m g_j(x),
\end{equation*}
where each $x_i\in\mathbb{R}^d$ denotes the variables at node $i$ and $x=(x_1, \dots, x_N)\in\mathbb{R}^{Nd}$ joins all node variables. This matches our framework with $f(x) = \sum_{i=1}^N f_i(x_i)$ and $g(x) = \sum_{j=1}^m g_j(x)$, where each $f_i\colon\mathbb{R}^d \to \mathbb{R}$ is convex and smooth, and each $g_j$ is convex and $L_g$-Lipschitz, depending only on variables indexed by $S_j \subseteq \{1,\ldots,N\}$; that is, $g_j(x) = g_j(x_{S_j})$ where $x_{S_j} = (x_i \mid i \in S_j)$.

In particular, $m$ may equal the number of edges in a graph, with $g_j$ coordinating communication between nodes sharing edge $j$.

The gradient $\nabla f(x) = (\nabla f_1(x^{(1)}), \ldots, \nabla f_N(x^{(N)}))$ decomposes by separability. The BlockProx algorithm of \citet{LinKuangAlacaogluFriedlander2025} iterates as
\begin{equation}
\begin{aligned}
y_t &= x_t - \tau \nabla f(x_t), \\
x_{t+1}^{(i)} &= \left[ \prox_{m\tau g_{j_{i, t}}}(y_t) \right]_i \in \mathbb{R}^d, \text{~if~} i\in S_{j_{i, t}},\\
x_{t+1}^{(i)} &= y_t^{(i)},\text{~if~} i\not\in S_{j_{i, t}},
\end{aligned}
\end{equation}
where $j_{i, t}$ is drawn uniformly at random from $1,\dots,m$, which is the edge that node $i$ selects at iteration $t$. All the nodes are allowed to sample another edge and this process runs over all the nodes in parallel.

The analysis in \citet{LinKuangAlacaogluFriedlander2025} reduces BlockProx to \ref{eq:intro-incrementalpgd} without stochastic gradients. Specifically, Proposition~5.2 of that work translates convergence results for \ref{eq:intro-incrementalpgd} to BlockProx.

In particular, that work showed BlockProx achieves $\widetilde{O}(1/\sqrt{T})$ convergence either \emph{(i)} on the last iterate when the gradients of $f$ are bounded \citep[Thm.~5.6]{LinKuangAlacaogluFriedlander2025}, or \emph{(ii)} on the average iterate when $f$ is smooth \citep[Thm.~5.9]{LinKuangAlacaogluFriedlander2025}. However, the experiments therein use linear least squares for $f$ and output the last iterate. Hence, the existing analysis does not cover the method as implemented. We bridge this theory-practice gap by proving last-iterate rates when each $f_i$ is convex and smooth, without bounded gradients.
The proof of the following theorem appears in \cref{app: blockprox}.
\begin{theorem}[Last-iterate convergence, BlockProx]\label{th: blockprox}
Let \cref{assumptions,assumptions2_main} hold. Consider the iterates of BlockProx with $\tau = 1/(3L)$. Then, we have
\begin{equation*}
\mathbb{E}[h(x_{T}) - h^*] = O\left(\frac{\ln(T+1)}{\sqrt{T}} \right).
\end{equation*}
\end{theorem}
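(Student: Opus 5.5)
The plan is to reduce BlockProx to \eqref{eq:intro-incrementalpgd} and then invoke \cref{BIGTHEOREM2_main}. The deterministic-gradient case is the special case $\sigma_*^2 = 0$ if we view $f$ as its own single component; if the local $f_i$ are themselves sampled, we keep $\sigma_*^2$. The reduction follows Proposition~5.2 of \citet{LinKuangAlacaogluFriedlander2025}. Since each $g_j$ depends only on $x_{S_j}$, the point $\prox_{m\tau g_j}(y)$ agrees with $y$ outside $S_j$. Consequently, for each node $i$, the coordinate $x_{t+1}^{(i)}$ equals the $i$-th block of $\prox_{m\tau g_{j_{i,t}}}(y_t)$ in both branches of the update. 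This lets us write $x_{t+1}^{(i)} = [\prox_{m\tau g_{j_{i,t}}}(y_t)]_i$. The only difference from \eqref{eq:intro-incrementalpgd} is that different nodes use independent, identically distributed edge indices. The marginal law of each block of $x_{t+1}$ given $\mathcal F_t$ is therefore the same as under \eqref{eq:intro-incrementalpgd}, but the joint law is not.

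Since the last-iterate reduction needs a one-step inequality rather than equality in law, the cleanest route is to re-derive \cref{blockproxlinearcomb_main} directly for BlockProx. The three-point identity \eqref{eq: tru4} splits across blocks:
\[
\|x_{t+1}-z\|^2-\|x_t-z\|^2=\textstyle\sum_i\bigl(\|x^{(i)}_{t+1}-z^{(i)}\|^2-\|x^{(i)}_t-z^{(i)}\|^2\bigr).
\]
For each block, write $\tilde x^{j} := \prox_{m\tau g_j}(y_t)$ and use the prox inequality \eqref{eq: ipgd_prox_ineq} for $\tilde x^{j}$. Taking $\E_t$ over $j_{i,t}$ and summing over $i$, the right-hand side becomes an average over $j$ of quantities computed from the full vectors $\tilde x^j$. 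This is exactly what appears in the \eqref{eq:intro-incrementalpgd} analysis with $x_{t+1}$ replaced by $\tilde x^{j_t}$. The expected squared distance $\E_t\|x_{t+1}-z_t\|^2$ also coincides with $\E_j\|\tilde x^j - z_t\|^2$, because the expected squared norm is additive over blocks and each block has the same marginal. So the distance terms and the error terms $v+8\tau m^2L_g^2$ transfer unchanged.

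The main obstacle is the left-hand side: $\E_t h(x_{t+1})$ is not $\E_j h(\tilde x^j)$, because $h$ is not separable. The $f$ part is separable, so $\E_t f(x_{t+1}) = \E_j f(\tilde x^j)$ by blockwise marginals. For the $g$ part, I would use the same Lipschitz decoupling as in \cref{sec: main_ideas_incr}. I would compare $g(x_{t+1})$ with $g(y_t)$ via $L_g$-Lipschitzness, and bound $\E_t\|x_{t+1}-y_t\|^2=\E_j\|\tilde x^j-y_t\|^2 \le (m\tau\, mL_g)^2$-type quantities. This gives an extra $O(\tau m^2L_g^2)$ term, which is of the same order as the existing error. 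Having established the one-step inequality with modified constants, I would apply \cref{theorem41_main} and the $\alpha_t$ bounds of \cref{subsec: alpha-bds}. I expect the step-size statement $\tau=1/(3L)$ needs to be read as $\tau = c/(L\sqrt T)$, with $c\le 1/5$ so that $\tau<1/(4L)$ and \cref{blockproxlinearcomb_main} applies. Then the error sum $\mathcal E\sum_t\alpha_t = O(\tau \ln T\cdot \alpha_T)$ yields $O(\ln(T+1)/\sqrt T)$ after dividing by $\alpha_T$, exactly as in \cref{BIGTHEOREM2_main}.
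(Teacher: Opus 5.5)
Your argument is correct, but it follows a different route from the paper. The paper never passes through \eqref{eq:intro-incrementalpgd}. Instead it imports the BlockProx-specific bound of \citet[Lemma~5.3]{LinKuangAlacaogluFriedlander2025},
$\E_t\|x_{t+1}-u\|^2\le\|y_t-u\|^2-2\tau[g(x_t)-g(u)]+2mL_g\tau^2\|\nabla f(x_t)\|+2m^2L_g^2\tau^2$, in which $g$ is evaluated at the \emph{current} iterate. It then expands $\|y_t-z_t\|^2$ using convexity of $f$, and applies Young's inequality, \cref{variancetransfer}, and $\|\nabla f(x^*)\|\le mL_g$. This gives a recursion with only $h(x_t)$, $h(z_t)$, $h^*$ on the left, which is the template of \citet[Lemma~4.2]{garrigos2025last}, and the rate is read off from \citet[Thm.~3.1, Cor.~3.4]{garrigos2025last}.

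You instead obtain the Zamani-type inequality with $h(x_{t+1})$ on the left by coupling each BlockProx step to a virtual \eqref{eq:intro-incrementalpgd} step $\tilde x^{j}$. Three facts make this work:
\begin{enumerate}
\item the identity $\E_t\|x_{t+1}-z_t\|^2=\E_j\|\tilde x^j-z_t\|^2$, from blockwise marginals;
\item the identity $\E_t f(x_{t+1})=\E_j f(\tilde x^j)$, from separability of $f$;
\item the bound $\E_t g(x_{t+1})\le\E_j g(\tilde x^j)+2\tau m^2L_g^2$. This holds because $g$ is $mL_g$-Lipschitz, $\|\tilde x^j-y_t\|\le m\tau L_g$, and $\E_t\|x_{t+1}-y_t\|\le(\E_j\|\tilde x^j-y_t\|^2)^{1/2}$.
\end{enumerate}
With these, \cref{blockproxlinearcomb_main} and \cref{theorem41_main} apply with $\mathcal E=v+10\tau m^2L_g^2$.

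The paper's route is shorter and needs only $\tau L<1/2$, but it hands the effect of independent per-node edge sampling to the external lemma. Yours stays inside this paper's machinery and shows explicitly why only marginals matter for the distance term. It also extends directly to stochastically sampled local gradients. The cost is a slightly worse constant and the condition $\tau L<1/4$.

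Three small corrections.
\begin{enumerate}
\item $\sigma_*^2$ in \cref{assumptions} is a second moment at $x^*$, not a variance. Treating $f$ as a single component therefore gives $\sigma_*^2=\|\nabla f(x^*)\|^2$, which is generally nonzero; it is bounded by $m^2L_g^2$, as the paper notes. If you re-derive \cref{onestepid} with exact gradients, the noise term vanishes identically instead. Either way the rate is unaffected.
\item The prox inequality \eqref{eq: ipgd_prox_ineq} does not split over blocks, so your ``per-block'' use of it is not meaningful. It is also unnecessary: the marginal identity for the squared distance already lets you apply the \eqref{eq:intro-incrementalpgd} lemma to $\tilde x^{j}$ as a whole.
\item Your reading of the step size agrees with the paper's proof, which uses a horizon-dependent $\tau$ together with \citet[Cor.~3.4]{garrigos2025last}. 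A fixed $\tau=1/(3L)$ would leave a nonvanishing $O(\tau m^2L_g^2)$ error. Also, $\tau=1/(3L\sqrt T)$ already satisfies $\tau L<1/4$ for $T\ge 2$, so you need not shrink the constant to $1/5$.
\end{enumerate}
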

Although \ref{eq:intro-incrementalpgd} generalizes BlockProx by including stochastic gradients, BlockProx uses deterministic gradients of $f$. This simpler structure admits a direct proof: we adapt the one-iteration analysis in \citet{LinKuangAlacaogluFriedlander2025} to the framework of \citet{garrigos2025last}. 

\section{Numerical Experiments}

Practitioners routinely output the last iterate of proximal SGD rather than
the average. Empirical evidence shows this choice performs
well. We provide two illustrative experiments that complement our theoretical
analysis.

We compare the last iterate $x_T$ and the averaged iterate
$\bar{x}_T = (1/T)\sum_{t=1}^T x_t$ on two problems: Lasso regression
with synthetic data and $\ell_1$-regularized logistic regression on MNIST.
All results are averaged over 10 independent trials with fixed datasets.

\paragraph{Synthetic Lasso.}
We solve
\begin{align*}
    \min_x \left\{h(x)=(1/2n)\|Ax-y\|_2^2+\lambda\|x\|_1\right\},
\end{align*}
using a synthetic matrix $A$ and sparse ground truth. We compute the optimal
value $h^*$ using \texttt{CVXPY} and plot the optimality gap $h(x_t) - h^*$
over iterations. The stepsize is $\tau = 1/(4L\sqrt{T})$, consistent with
our theoretical prescription.
\Cref{fig:lasso_mean_gap} shows that while the averaged iterate decreases
more smoothly, the last iterate achieves a smaller optimality gap across
all trials.

\paragraph{$\ell_1$-Regularized Logistic Regression.}
We solve multiclass logistic regression with $\ell_1$ regularization on MNIST
using~\eqref{eq:intro-spgd} with stepsize $\tau = 1/\sqrt{T}$. We compute a
reference solution $h^*$ using the SAGA solver from Scikit-Learn \citep{Pedregosa_Scikit-learn_Machine_Learning_2011}.
\Cref{fig:mnist_mean_gap} confirms that the last iterate typically achieves
a smaller optimality gap than the averaged iterate, consistent with the
Lasso experiment.

\section{Conclusions and Perspectives}

For \ref{eq:intro-spgd} and \ref{eq:intro-incrementalpgd} (that generalize SPP), we proved that the last-iterate has the rate of convergence $\widetilde{O}( 1/\sqrt{T} )$ with no bounded variance assumptions, under componentwise smoothness and convexity. This rate is $\ln (T+1)$ away from the optimal rate that is only known with bounded variance, and it matches the best-known rates for last-iterate of SGD applied to unconstrained minimization \citep{garrigos2025last, attia2025fastlastiterateconvergencesgd} or for averaged iterate for problems with unbounded variance \citep{khaled2023unified}.

Our work paves the way to many different directions. The first direction is extending our guarantees to methods with random reshuffling or sampling without replacement for selecting stochastic gradients. For this, the techniques developed in \citet{cai2025last} and \citet{liu2025improved} will likely prove useful. The second direction is lifting the Lipschitzness requirement in \cref{sec: rand_inc_prox}. Indeed this assumption is common in the SPP literature. However, for the important special case of $g_i(x) = \delta_{C_i}(x)$ for a convex and closed set $C_i$, SPP reduces to the well-known method of randomized alternating projections, which admits a separate analysis framework, see for example \cite{nedic2011random}. Extending these techniques to our template would allow us to solve problems of the form $\min\ \{ f(x)  \mid x\in \bigcap_{i=1}^m C_i\}$ with last-iterate convergence rates and without bounded variance assumptions on the smooth $f$. Two other directions are extending our analysis to allow Bregman distances or to show high probability guarantees on the last iterate without bounded variance.

\section*{Acknowledgments}
Michael P. Friedlander acknowledges the support of the Natural Sciences and Engineering Research Council of Canada (NSERC).

Ahmet Alacaoglu acknowledges the support of the Natural Sciences and Engineering Research Council of Canada (NSERC), [funding reference number RGPIN-2025-06634].

\bibliography{last_it_prox_arxiv.bib}
\bibliographystyle{abbrvnat}

\newpage
\appendix
\crefalias{section}{appendix}
\crefalias{subsection}{appsubsection}
\onecolumn

\section{Proofs for Section \ref{sec: proxsgd}}\label{app: spgd}
We will now present our main last-iterate results for \eqref{eq:intro-spgd}. For the statement in the main text, that is, \Cref{BIGTHEOREM_main}, we used $C=3$ and simplified the bounds.
\begin{theorem}[Last-iterate convergence, polynomial step sizes]\label{BIGTHEOREM}
    Consider the \eqref{eq:intro-spgd}  algorithm with step size $\tau = \frac{1}{C L T^\beta}$ for some $C > 2$ and $\beta > 0$. Suppose that assumption \ref{assumptions} hold. Then, the last iterate $x_{T+1}$ satisfies
    \begin{align*}
        \E\left[h(x_{T+1}) - h^* \right] \leq A \left[\frac{C L \E[\|x_0 - x^*\|^2]}{T^{1 - \beta}} + \frac{2}{T}\E [h(x_0) - h^*] + \frac{4 \sigma_*^2 }{(C-2)LT^{1+\beta}} + \frac{16 \sigma_*^2  \ln (T+1)}{(C-2)LT^\beta} \right]
    \end{align*}
    where $A = \exp\left(\frac{4}{e\beta C}\right)$.
\end{theorem}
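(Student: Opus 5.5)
The plan is to combine \Cref{proxobjectivecomb_main} with \Cref{theorem41_main} and then bound the growth of the weights $\alpha_t$. With $\tau = 1/(CLT^\beta)$ and $C>2$, we have $\tau<1/(2L)$, so \Cref{proxobjectivecomb_main} applies. We choose $\epsilon'$ so that $a = 2\tau L(1+\epsilon')$ stays bounded away from $1$; a natural choice is $1+\epsilon' = C/2$, i.e.\ $\epsilon' = (C-2)/2$. This gives $a = 1/T^\beta$ and $v = \tfrac{C}{C-2}\sigma_*^2\tau = \sigma_*^2/((C-2)LT^\beta)$. These match the factors $1/(C-2)$ in the claimed bound.

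Since $a<1$, we have $p_t\in[0,1]$ and $z_t$ is $\mathcal{F}(x_0,\dots,x_t)$-measurable. We take total expectation in \Cref{proxobjectivecomb_main} and invoke \Cref{theorem41_main} with $\mathcal{E}=v$. Using $1/(2\tau) = CLT^\beta/2$, this yields
\[
\alpha_T\,\E[h(x_{T+1})-h^*] \le \tfrac{CLT^\beta}{2}\|x_0-x^*\|^2 + h(x_0)-h^* + v\Big(\alpha_0+\sum_{t=1}^T\alpha_t\Big),
\]
with $x^*$ chosen as the projection of $x_0$ onto $X^*$ so that the first term uses $D_*^2$. Dividing by $\alpha_T$, it remains to show two things: a lower bound $\alpha_T \gtrsim T/A$, and an upper bound $\sum_t \alpha_t/\alpha_T \lesssim A\,(1/T + 4\ln(T+1))$ up to the stated constants.

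The main obstacle is controlling the weights. Unrolling the recursion gives
\[
\alpha_t = \prod_{s=1}^t \frac{T-s+2}{T-s+1+a},
\]
and therefore
\[
\frac{\alpha_t}{\alpha_T} = \prod_{s=t+1}^T \frac{T-s+1+a}{T-s+2} = \prod_{k=1}^{T-t}\Big(1-\frac{1-a}{k+1}\Big).
\]
For the lower bound on $\alpha_T$, write
\[
\alpha_T = \prod_{k=1}^T \frac{k+1}{k+a} = \frac{T+1}{\prod_{k=1}^T(1+a/k)} \ge \frac{T+1}{\exp(a H_T)},
\]
where $H_T$ is the harmonic number. Then $aH_T \le T^{-\beta}(1+\ln T)$. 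The key elementary bound is $T^{-\beta}\ln T \le 1/(e\beta)$, obtained by maximizing $x^{-\beta}\ln x$. This produces the factor $A=\exp(4/(e\beta C))$; I expect the constant $4/C$ to come out of bookkeeping, possibly from choosing $\epsilon'$ slightly differently. For the ratios, use $1-x\le e^{-x}$:
\[
\frac{\alpha_t}{\alpha_T} \le \exp\big(-(1-a)(H_{T-t+1}-1)\big) \lesssim A\cdot\frac{1}{T-t+1}.
\]
Here the $a$-part of the exponent is again absorbed into $A$ by the same bound. Summing over $t$ gives $\sum_t \alpha_t/\alpha_T \lesssim A(\ln(T+1)+O(1))$, with the extra $\alpha_0/\alpha_T\lesssim A/T$ term accounting for the $1/T^{1+\beta}$ contribution.

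Collecting terms finishes the proof. The distance term becomes $\tfrac{CLT^\beta}{2}D_*^2\cdot\tfrac{A}{T}\lesssim A\,CLD_*^2/T^{1-\beta}$. The initial-gap term becomes $2A(h(x_0)-h^*)/T$. The variance terms become $A\,\tfrac{\sigma_*^2}{(C-2)LT^\beta}(4/T+16\ln(T+1))$. Constants such as $2$, $4$ and $16$ are whatever the crude harmonic and $T+1$ versus $T$ estimates produce, and I would not try to optimize them.
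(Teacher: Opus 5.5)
Your outline matches the paper's: the per-iteration lemma, the Zamani-type reduction with $\mathcal{E}=v$, then bounds on $1/\alpha_T$ and $\sum_t\alpha_t/\alpha_T$, with $a\ln T$ controlled by $\ln T\le T^\beta/(e\beta)$. Your elementary product estimates are an acceptable substitute for Gautschi's inequality in the ratio bound.

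\textbf{The genuine gap is the choice $\epsilon'=(C-2)/2$.} It gives $a=T^{-\beta}$, which does not depend on $C$. The best available bound is then $T^a\le \exp(1/(e\beta))$, and this exceeds $A=\exp(4/(e\beta C))$ whenever $C>4$. This is not bookkeeping. Since $\alpha_T$ is determined exactly by $a$, the reduction's own output $\|x_0-x^*\|^2/(2\tau\alpha_T)$ is already too large. For example, take $\beta=0.1$, $C=100$, $T=e^{10}$. Then $a=e^{-1}$ and $\alpha_T\approx\Gamma(1+a)(T+1)^{1-a}\approx 500$, so the distance term is roughly $20$ times the claimed $A\,CL\|x_0-x^*\|^2/T^{1-\beta}$. (For $2<C\le 4$ your choice is fine for the exponent, and it even halves the variance constants.)

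To get the stated $A$ for every $C>2$, you need both $a\le 4\tau L$ and $a\le 1$. The paper takes $\epsilon'=(1-2\tau L)/(1+2\tau L)$. Then $a=4\tau L/(1+2\tau L)$ satisfies both conditions, and $a\ln T\le 4/(e\beta C)$. The price is $v=2\sigma_*^2\tau/(1-2\tau L)\le 2\sigma_*^2/((C-2)LT^\beta)$, twice your $v$. That factor is exactly where the constants $4=2\cdot 2$ and $16=8\cdot 2$ in the statement come from.

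\textbf{A second, smaller problem is your lower bound on $\alpha_T$.} With the paper's $a$, which approaches $1$ when $C$ is near $2$ and $\beta$ is small, your bound $\alpha_T\ge (T+1)e^{-aH_T}\ge (T+1)e^{-a}T^{-a}$ loses too much. The coefficients $CL$ and $2$ in front of $\|x_0-x^*\|^2/T^{1-\beta}$ and $(h(x_0)-h^*)/T$ require $\alpha_T\ge T^{1-a}/2$. Your estimate gives this only when $e^a\le 2(T+1)/T$, which fails for $a>\ln 2$ and large $T$.

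The paper obtains $\alpha_T\ge ((T+1)/2)^{1-a}\ge T^{1-a}/2$ from Gautschi's inequality. Equivalently, you can write $\prod_{k=1}^T(1+a/k)=\Gamma(T+1+a)/(\Gamma(1+a)\Gamma(T+1))\le (T+1)^a/\Gamma(1+a)$ and use $\Gamma(1+a)\ge 0.88$. Your bound on $\sum_{t\ge 1}\alpha_t/\alpha_T$, roughly $e\,T^a\ln(T+1)$, sits comfortably within the paper's $8T^a\ln(T+1)$.

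Finally, $a$ is not bounded away from $1$ under your choice, since $a=1$ at $T=1$. This is harmless, because the reduction only needs $a\le 1$.
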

By taking $\beta = 0.5$, we get the following immediate corollary.
\begin{corollary}[Best polynomial step size]
    Consider the \eqref{eq:intro-spgd}  algorithm with step size $\tau = \frac{1}{C L \sqrt{T}}$ for some $C > 2$. Suppose that assumption \ref{assumptions} hold. Then, the last iterate $x_{T+1}$ satisfies
    \begin{align*}
        \E\left[h(x_{T+1}) - h^* \right] &\leq A \left[\frac{C L \E[\|x_0 - x^*\|^2]}{\sqrt{T}} + \frac{2}{T}\E [h(x_0) - h^*] + \frac{4 \sigma_*^2 }{(C-2)LT^{1.5}} + \frac{16 \sigma_*^2  \ln (T+1)}{(C-2)L\sqrt{T}} \right]
    \end{align*}
    where $A = \exp\left(\frac{8}{e C}\right)$.
\end{corollary}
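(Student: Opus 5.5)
The corollary is obtained by specializing \Cref{BIGTHEOREM} to $\beta = 1/2$. All of the work sits in \Cref{BIGTHEOREM}, so the plan is to check the hypotheses and then substitute.

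First, the hypotheses match. The step size $\tau = 1/(CL\sqrt{T})$ is exactly $\tau = 1/(CLT^\beta)$ with $\beta = 1/2 > 0$, and the constraint $C>2$ is the same in both statements. Also, $C>2$ together with $T\ge 1$ gives $\tau \le 1/(CL) < 1/(2L)$. This is precisely the condition under which \Cref{proxobjectivecomb_main} applies and under which $a<1$, so that $p_t\in[0,1]$ in \Cref{theorem41_main}. Hence \Cref{BIGTHEOREM} applies verbatim.

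Next, I substitute $\beta = 1/2$ term by term in the bound of \Cref{BIGTHEOREM}:
\begin{itemize}
\item $T^{1-\beta} = \sqrt{T}$, so the first term becomes $CL\,\E\|x_0-x^*\|^2/\sqrt{T}$;
\item the term $2\E[h(x_0)-h^*]/T$ does not depend on $\beta$ and is unchanged;
\item $T^{1+\beta} = T^{1.5}$, which gives the term $4\sigma_*^2/((C-2)LT^{1.5})$;
\item $T^{\beta} = \sqrt{T}$, which gives the logarithmic term $16\sigma_*^2\ln(T+1)/((C-2)L\sqrt{T})$;
\item $A = \exp(4/(e\beta C)) = \exp(8/(eC))$.
\end{itemize}
These are exactly the quantities in the displayed inequality of the corollary.

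There is no real obstacle here; the only care needed is the bookkeeping of exponents and of the constant $A$. As a sanity check against \Cref{BIGTHEOREM_main}, I would take $C=3$. Then $A=\exp(8/(3e))\approx 2.67\le 9$, $C-2=1$, and $CL\,D_*^2 = 3LD_*^2$. Everything is then dominated by the prefactor $9/\sqrt{T}$ of \Cref{BIGTHEOREM_main}, using $2/T \le 9/T$, $4\cdot 2.67 \le 9\cdot 4$ and $16\cdot 2.67\le 36$. This confirms that the two statements are consistent.
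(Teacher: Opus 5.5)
Your proof is correct and is exactly the paper's argument: the corollary is obtained by setting $\beta = 1/2$ in \Cref{BIGTHEOREM}, which turns $T^{1-\beta}, T^{1+\beta}, T^{\beta}$ into $\sqrt{T}, T^{1.5}, \sqrt{T}$ and $A=\exp(4/(e\beta C))$ into $\exp(8/(eC))$. Your consistency check against \Cref{BIGTHEOREM_main} is arithmetically wrong and should be dropped, though this does not affect the corollary itself: with $C=3$ one has $A\approx 2.67$, so the log coefficient is $16A\approx 42.7>36$, and the $T^{-3/2}$ coefficient is $4A\approx 10.7$, which exceeds the matching coefficient $9$ (not $36$) in \Cref{BIGTHEOREM_main}; hence the corollary does not literally reproduce the constant $9$ there.
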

We shall now provide proofs of these theorems. Our proof proceeds in four stages:
\begin{enumerate}
    \item We prove a technical variance transfer lemma that allows us to bound $\E[\|\nabla f_i(x)\|^2]$. See \ref{variancetransfer}.
    \item We prove a one-step inequality bound that compares the iterates to a reference point $z_t \in \mathcal{F}(x_0, \dots x_t)$. See \ref{onestepid}
    \item We use this to prove a bound for the linear combination of objective value evaluated at a reference point $z_t$ which is a convex combination of our iterates. See \ref{proxobjectivecomb_main}.
    \item Using the above, we chose the structure of $z_t$ and weights $\alpha_t$ to obtain a bound for the last iterate.
\end{enumerate}
We start with the variance transfer lemma, which is now classical in the SGD literature, see for example \citep[Lemma A.2]{khaled2023unified}.
\begin{lemma}[Variance transfer] \label{variancetransfer}
    Let \Cref{assumptions} hold. Then for every $\epsilon > 0$ and $x\in\mathbb{R}^n$, we have
    \begin{equation}
        \E\|\nabla f_i(x)\|^2 \leq 2L (1+ \epsilon) [h(x) - h^*] + \left(1 + \frac{1}{\epsilon}\right) \sigma_*^2.
    \end{equation}
\end{lemma}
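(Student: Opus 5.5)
Fix $x^*\in X^*$ with $\E\|\nabla f_i(x^*)\|^2=\sigma_*^2$. The plan is to reproduce the unconstrained argument of \Cref{sec:starting-point} and replace the vanishing of $\nabla f(x^*)$ with the first-order optimality condition for $h$.

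\textbf{Step 1: split off the solution gradient.} By Young's inequality with parameter $\epsilon$,
\begin{equation*}
\|\nabla f_i(x)\|^2 \le (1+\epsilon)\|\nabla f_i(x)-\nabla f_i(x^*)\|^2 + \left(1+\tfrac{1}{\epsilon}\right)\|\nabla f_i(x^*)\|^2 .
\end{equation*}
Taking expectation, the second term gives exactly $(1+1/\epsilon)\sigma_*^2$.

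\textbf{Step 2: co-coercivity.} Applying \eqref{eq:cocoercivity} to each $f_i$ at $(x,x^*)$ and taking expectation over $i$ (using $\E\nabla f_i(x^*)=\nabla f(x^*)$) gives
\begin{equation*}
\E\|\nabla f_i(x)-\nabla f_i(x^*)\|^2 \le 2L\bigl(f(x)-f(x^*)-\langle \nabla f(x^*),x-x^*\rangle\bigr).
\end{equation*}

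\textbf{Step 3: remove the linear term.} In the unconstrained case this term vanishes; here we use optimality instead, and this is the main point. Since $x^*$ minimizes $h=f+g$ with $f$ differentiable and convex and $g$ proper, convex and lsc, the sum rule applies: $f$ is finite and continuous everywhere, so $\partial h(x^*)=\nabla f(x^*)+\partial g(x^*)$. Hence $-\nabla f(x^*)\in\partial g(x^*)$. The subgradient inequality then gives, for every $x$,
\begin{equation*}
g(x)\ge g(x^*)-\langle\nabla f(x^*),x-x^*\rangle,
\end{equation*}
that is,
\begin{equation*}
-\langle \nabla f(x^*),x-x^*\rangle\le g(x)-g(x^*).
\end{equation*}
Substituting into Step 2 yields
\begin{equation*}
\E\|\nabla f_i(x)-\nabla f_i(x^*)\|^2\le 2L\bigl(h(x)-h^*\bigr).
\end{equation*}
If $x\notin\operatorname{dom} g$, the right-hand side is $+\infty$ and the bound holds trivially.

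\textbf{Step 4: combine.} Inserting Step 3 into Step 1 gives the claimed inequality
\begin{equation*}
\E\|\nabla f_i(x)\|^2\le 2L(1+\epsilon)\bigl(h(x)-h^*\bigr)+\left(1+\tfrac{1}{\epsilon}\right)\sigma_*^2 .
\end{equation*}
The only delicate point in the argument is Step 3: replacing the Bregman divergence of $f$ by the composite gap $h(x)-h^*$ through the optimality condition, which requires the subdifferential sum rule. That rule holds here because $f$ is finite-valued.
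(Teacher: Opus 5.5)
Your proof is correct and is essentially the paper's argument: co-coercivity of each $f_i$ at $x^*$, the optimality condition $-\nabla f(x^*)\in\partial g(x^*)$ to turn the Bregman term into $h(x)-h^*$, and Young's inequality with parameter $\epsilon$ (the paper applies Young last rather than first). Your extra care goes beyond what the paper writes out: you fix the particular minimizer at which $\sigma_*^2$ is defined, justify the sum rule via finiteness of $f$, and note the trivial case $x\notin\operatorname{dom} g$.
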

\begin{proof}
    Let us denote by $x^*$ an arbitrary solution of our problem. By smoothness and convexity of $f_i$, we have the standard inequality \citep[Thm.~2.1.5]{nesterov2018lectures} (see \eqref{eq:cocoercivity})
    \begin{align*}
        \frac{1}{2L} \|\nabla f_i(x) - \nabla f_i(x^*)\|^2 &\leq f_i(x) - f_i(x^*) - \langle \nabla f_i(x^*), x-x^*\rangle,
    \end{align*}
    for all $u, v$. Setting $v=x$ and $u=x^*$ and taking expectation we get:
    \begin{align}\label{eq: var_tran_1}
        \frac{1}{2L} \E\|\nabla f_i(x) - \nabla f_i(x^*)\|^2 &\leq f(x) - f(x^*) - \langle \nabla f(x^*), x - x^*\rangle.
    \end{align}
    From the optimality condition of our problem, we have that $0 \in \nabla f(x^*) + \partial g(x^*)$. Hence, the convexity of $g$ yields
    \begin{align*}
        g(y) \geq g(x^*) + \langle -\nabla f(x^*),  y - x^*\rangle,
    \end{align*}
    for all $y$. Setting $y=x$ and substituting into \eqref{eq: var_tran_1} gives
    \begin{equation}\label{eq: var_tran2}
        \begin{aligned}
            \frac{1}{2L} \E\|\nabla f_i(x) - \nabla f_i(x^*)\|^2 &\leq f(x) - f(x^*) + g(x) - g(x^*) \\
            &= h(x) - h^*.
        \end{aligned}
    \end{equation}
    Now, we estimate as
    \begin{align*}
        \E\|\nabla f_i(x)\|^2&\leq (1 + \epsilon) \E\|\nabla f_i(x) - \nabla f_i(x^*)\|^2 + \left(1 + \frac{1}{\epsilon}\right) \E\|\nabla f_i(x^*)\|^2 \tag{Young's inequality}\\
        &\leq 2L (1+ \epsilon) [h(x) - h^*] + \left(1 + \frac{1}{\epsilon}\right) \sigma_*^2 \tag{Using \eqref{eq: var_tran2} and \eqref{eq:second-moment-solution}}.
    \end{align*}
    This concludes the proof.
\end{proof}

We now restate the optimality condition for the proximal operator that we will use in our proofs, which is well known and its proof follow from the definition of $\prox_{\tau g}$ and the convexity of $g$. See for example \citep[Theorem 6.39]{beck2017first}.
\begin{lemma}[Proximal optimality]\label{proxprojectionlemma}
    For any proper, convex, and lower semicontinuous function $g: \R^n \to (-\infty, +\infty]$, the proximal operator $\prox_{\tau g}$ satisfies the optimality condition
    \begin{equation}
        \langle x - \prox_{\tau g}(y), y - \prox_{\tau g}(y) \rangle \leq \tau (g(x) - g(\prox_{\tau g}(y)))
    \end{equation}
    for all $x, y \in \R^n$.
\end{lemma}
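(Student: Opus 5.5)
The statement to prove is \Cref{proxprojectionlemma}. The plan is to derive it directly from the definition of the proximal operator as the minimizer of a strongly convex function, together with the subgradient characterization of that minimizer. Fix $y$ and set $p = \prox_{\tau g}(y) = \argmin_z \{\tau g(z) + \tfrac12\|z-y\|^2\}$. Since $g$ is proper, convex and lower semicontinuous, the objective $z\mapsto \tau g(z)+\tfrac12\|z-y\|^2$ is proper, lower semicontinuous and $1$-strongly convex. Hence $p$ exists, is unique, and lies in $\operatorname{dom} g$.

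\textbf{Main step.} I would derive the inequality from the optimality condition without invoking the subdifferential sum rule, using a convex-combination argument. For $x\in\operatorname{dom} g$ and $\lambda\in(0,1)$, set $z_\lambda = p + \lambda(x-p)$. Minimality of $p$ and convexity of $g$ give
\[
\tau g(p) + \tfrac12\|p-y\|^2 \le \tau\bigl((1-\lambda) g(p) + \lambda g(x)\bigr) + \tfrac12\|p-y+\lambda(x-p)\|^2 .
\]
Expanding the square, cancelling $\tfrac12\|p-y\|^2$, and dividing by $\lambda>0$ yields
\[
0 \le \tau\bigl(g(x)-g(p)\bigr) + \langle p-y, x-p\rangle + \tfrac{\lambda}{2}\|x-p\|^2 .
\]
Letting $\lambda\downarrow 0$ gives $\langle y-p, x-p\rangle \le \tau(g(x)-g(p))$, which is exactly the claim.

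\textbf{Remaining case and obstacles.} If $x\notin\operatorname{dom} g$, the right-hand side is $+\infty$ while the left-hand side is finite, so the inequality holds trivially. The only points needing care are the finiteness of $g(p)$ and the existence of the minimizer. Both follow from properness, lower semicontinuity and strong convexity (coercivity). This is also where the standing assumption that $g$ is lsc is used, and I expect nothing else to be an obstacle. Alternatively, one can cite \citep[Theorem 6.39]{beck2017first}: $y-p\in\tau\partial g(p)$ gives the same inequality directly from the subgradient inequality.
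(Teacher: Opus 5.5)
Your proof is correct and consistent with the paper, which gives no argument of its own beyond noting that the inequality follows from the definition of $\prox_{\tau g}$ and the convexity of $g$, and citing Beck's Theorem 6.39. Your first-variation argument along $z_\lambda = p + \lambda(x-p)$ is a self-contained version of that remark: it sidesteps the subdifferential sum rule behind the characterization $y - p \in \tau\partial g(p)$ used in Beck's route, and your handling of $x \notin \operatorname{dom} g$ and of the finiteness of $g(p)$ covers the only delicate points.
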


We finally provide a one-step progress inequality with an arbitary reference point $z$. 
\begin{lemma}[One-step progress] \label{onestepid}
    Consider the proximal step 
    \[x_{t+1} = \prox_{\tau g}(x_t - \tau \nabla f_{i_t}(x_t))\]
    where $f_{i}$ is convex, $L$-smooth with $\E [\nabla f_i(x)] = \nabla f(x)$ and $g$ is proper, convex, and lower semicontinuous. Then for any $z \in \R^n$ and $\epsilon>0$, it holds that
    \begin{align*}
        \|x_{t+1} - z_t\|^2 - \|x_t - z_t\|^2 &\leq 2\tau [f_{i_t}(z_t) - f_{i_t}(x_t)] + 2\tau (g(z_t) - g(x_{t + 1})) +
        \frac{\tau}{ \epsilon} \|\nabla f_{i_t}(x_t) - \nabla f(x_t)\|^2 + \\
        & \quad +2\tau [f(x_t) - f(x_{t+1})] + [\tau \epsilon + \tau L - 1] \|x_t - x_{t+1}\|^2.
    \end{align*}
\end{lemma}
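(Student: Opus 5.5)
We start from the three-point identity \eqref{eq: tru4} with $z=z_t$ and bound the cross term using \Cref{proxprojectionlemma}. Apply the lemma with $y = x_t - \tau\nabla f_{i_t}(x_t)$ and $x = z_t$, so that $\prox_{\tau g}(y)=x_{t+1}$. This gives
\[
\langle z_t - x_{t+1},\, x_t - \tau\nabla f_{i_t}(x_t) - x_{t+1}\rangle \le \tau\bigl(g(z_t)-g(x_{t+1})\bigr).
\]
Writing $2\langle x_{t+1}-x_t, x_t-z_t\rangle = 2\langle x_{t+1}-x_t, x_{t+1}-z_t\rangle - 2\|x_{t+1}-x_t\|^2$ and inserting the above, we obtain \eqref{eq: psgd_comb}:
\[
\|x_{t+1}-z_t\|^2-\|x_t-z_t\|^2 \le -\|x_{t+1}-x_t\|^2 + 2\tau(g(z_t)-g(x_{t+1})) + 2\tau\langle \nabla f_{i_t}(x_t), z_t-x_{t+1}\rangle.
\]

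Next we handle the mismatched inner product by splitting it as
\[
\langle \nabla f_{i_t}(x_t), z_t - x_{t+1}\rangle = \langle \nabla f_{i_t}(x_t), z_t - x_t\rangle + \langle \nabla f_{i_t}(x_t), x_t - x_{t+1}\rangle .
\]
Convexity of $f_{i_t}$ bounds the first piece by $f_{i_t}(z_t)-f_{i_t}(x_t)$. For the second piece we add and subtract $\nabla f(x_t)$:
\[
\langle \nabla f_{i_t}(x_t), x_t-x_{t+1}\rangle = \langle \nabla f_{i_t}(x_t)-\nabla f(x_t), x_t-x_{t+1}\rangle + \langle \nabla f(x_t), x_t-x_{t+1}\rangle .
\]
The noise part is bounded by Young's inequality as $\frac{1}{2\epsilon}\|\nabla f_{i_t}(x_t)-\nabla f(x_t)\|^2 + \frac{\epsilon}{2}\|x_t-x_{t+1}\|^2$. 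The deterministic part is handled by the descent lemma for the $L$-smooth $f$ (an average of $L$-smooth convex $f_i$, hence $L$-smooth):
\[
f(x_{t+1}) \le f(x_t) + \langle\nabla f(x_t), x_{t+1}-x_t\rangle + \tfrac{L}{2}\|x_{t+1}-x_t\|^2,
\]
so $\langle \nabla f(x_t), x_t-x_{t+1}\rangle \le f(x_t)-f(x_{t+1}) + \frac{L}{2}\|x_t-x_{t+1}\|^2$.

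Multiplying these bounds by $2\tau$ and collecting the $\|x_t-x_{t+1}\|^2$ coefficients gives $-1+\tau\epsilon+\tau L$. The remaining terms are $2\tau[f_{i_t}(z_t)-f_{i_t}(x_t)]$, $2\tau(g(z_t)-g(x_{t+1}))$, $\frac{\tau}{\epsilon}\|\nabla f_{i_t}(x_t)-\nabla f(x_t)\|^2$ and $2\tau[f(x_t)-f(x_{t+1})]$, which is exactly the claimed inequality.

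The only delicate step is the splitting of the mismatched inner product. The gradient noise must be paired with $x_t-x_{t+1}$ rather than $z_t-x_{t+1}$, since $x_{t+1}$ depends on $i_t$. The deterministic remainder then has to be converted to function values via the descent lemma, so that the $-\|x_{t+1}-x_t\|^2$ from the prox step can absorb the quadratic terms. Everything else is bookkeeping; no expectation is needed at this stage.
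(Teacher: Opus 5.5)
Your proposal is correct and follows the paper's proof essentially step for step. Both arguments use the three-point identity together with the prox optimality inequality at $y = x_t - \tau\nabla f_{i_t}(x_t)$, $x = z_t$, then split $\langle \nabla f_{i_t}(x_t), z_t - x_{t+1}\rangle$ through $x_t$, and finish with convexity of $f_{i_t}$, Young's inequality on the noise term, and the descent lemma for $f$. Your Young's step, $\langle u,w\rangle \le \frac{1}{2\epsilon}\|u\|^2 + \frac{\epsilon}{2}\|w\|^2$ scaled by $2\tau$, also has cleaner constants than the paper's corresponding display, which puts the $\tau$ factors on the bound for the unscaled inner product.
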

\begin{proof}
    Using the three point identity, $\langle a, b \rangle = -\frac{1}{2} \| a\|^2 - \frac{1}{2} \| b\|^2+\frac{1}{2} \| a+b\|^2$, we have
    \begin{align}\label{eq:prox1}
        \|x_{t+1} - z_t\|^2 - \|x_t - z_t\|^2 &= \|x_{t+1} - x_t\|^2 + 2\underbrace{\langle x_{t+1} - x_t, x_t - z_t\rangle}_{1}.
    \end{align}
    Let us consider term 1. We use Lemma \ref{proxprojectionlemma}, with $y= x_t - \tau \nabla f_{i_t}(x_t)$ and $x = z_t$ to obtain
    \begin{align*}
        \langle x_{t+1} - x_t, x_t - z_t \rangle &= \langle x_{t+1} - x_t, x_t - x_{t+1}\rangle + \langle x_{t+1} - x_t, x_{t+1} - z_t\rangle \\
        &\leq - \|x_{t+1} - x_{t}\|^2 + \tau (g(z_t) - g(x_{t + 1})) + \tau \langle \nabla f_{i_t}(x_t), z_t - x_{t + 1}\rangle.
    \end{align*}
    Substituting this into \eqref{eq:prox1} gives
    \begin{align}
        \|x_{t+1} - z_t\|^2 - \|x_t - z_t\|^2 &\leq - \|x_{t+1} - x_t\|^2 + 2\tau (g(z_t) - g(x_{t + 1})) + \underbrace{2\tau \langle \nabla f_{i_t}(x_t), z_t - x_{t + 1}}_{2}\rangle. \label{eq: spgd_term2}
    \end{align}
    We next consider term 2:
    \begin{align}
        2\tau \langle \nabla f_{i_t}(x_t), z_t - x_{t + 1}\rangle &= 2\tau \langle \nabla f_{i_t}(x_t), z_t - x_{t}\rangle+2\tau \langle \nabla f_{i_t}(x_t), x_{t} - x_{t+1}\rangle \notag \\
        &\leq 2\tau [f_{i_t}(z_t) - f_{i_t}(x_t)] + 2\tau \langle \nabla f_{i_t}(x_t), x_t - x_{t + 1}\rangle \tag{Convexity of $f_{i_t}$}\notag\\
        &\leq 2\tau [f_{i_t}(z_t) - f_{i_t}(x_t)] \notag \\
        &\quad + \underbrace{2\tau \langle \nabla f_{i_t}(x_t) - \nabla f(x_t), x_t - x_{t + 1}\rangle}_3 + \underbrace{2\tau \langle \nabla f(x_t), x_t - x_{t + 1}\rangle}_4.\label{eq: spgd_34}
    \end{align}
    We estimate term 3 using Young's inequality and get
    \begin{equation*}
        \langle \nabla f_{i_t}(x_t) - \nabla f(x_t), x_t - x_{t+1} \rangle\leq \frac{\tau}{\epsilon} \|\nabla f_{i_t}(x_t) - \nabla f(x_t)\|^2 + \tau \epsilon \| x_t-x_{t+1}\|^2.
    \end{equation*}
    For term 4 in \eqref{eq: spgd_34}, we can use the smoothness of $f$ to derive
    \begin{equation*}
        2\tau \langle \nabla f(x_t), x_t - x_{t + 1}\rangle \leq 2\tau \left( f(x_t) - f(x_{t+1}) + \frac{L}{2} \|x_t - x_{t+1}\|^2\right).
    \end{equation*}
    We plug in the last two estimates to \eqref{eq: spgd_34} which we then insert in \eqref{eq: spgd_term2} to get the estimate
    \begin{align*}
        \|x_{t+1} - z_t\|^2 - \|x_t - z_t\|^2 &\leq - \|x_{t+1} - x_t\|^2 + 2\tau (g(z_t) - g(x_{t + 1})) \\ 
        &\quad+ 2\tau [f_{i_t}(z_t) - f_{i_t}(x_t)] + 
        \frac{\tau}{ \epsilon} \|\nabla f_{i_t}(x_t) - \nabla f(x_t)\|^2  \\
        &\quad+\tau \epsilon\|x_t - x_{t+1}\|^2 + 2\tau [f(x_t) - f(x_{t+1}) + \frac{L}{2} \|x_t - x_{t+1}\|^2].
    \end{align*}
    Grouping the terms involving $\|x_t - x_{t+1}\|^2$ concludes the proof.
\end{proof}
\subsection{Bounding a linear combination of function values}\label{subsec: one-it-spgd}
\begin{proof}[Proof of \Cref{proxobjectivecomb_main}]
    On the result of Lemma \ref{onestepid}, 
    we take conditional expectation and rearrange and note the cancellation of the terms $f_{i_t}(x_t)$ and $f(x_t)$ to derive
    \begin{align*}
        2\tau \E_t[f(x_{t+1}) + g(x_{t+1}) - f(z_t) - g(z_t)] &\leq \E_t[\|x_t - z_t\|^2 - \|x_{t+1} - z_t\|^2] +  \frac{\tau}{ \epsilon} \E_t\|\nabla f_{i_t}(x_t) - \nabla f(x_t)\|^2 \\
        & + [\tau \epsilon + \tau L - 1] \E_t \|x_t - x_{t+1}\|^2.
    \end{align*}
    We next use the bound $\mathbb{E}_{t}\|\nabla f_{i_t}(x_t) - \nabla f(x_t)\|^2 \leq \mathbb{E}_{t}\|\nabla f_{i_t}(x_t)\|^2$
    and Lemma \ref{variancetransfer} to get
    \begin{align}
        2\tau \E_t[f(x_{t+1}) + g(x_{t+1}) - f(z_t) - g(z_t)] &\leq \E_t[\|x_t - z_t\|^2 - \|x_{t+1} - z_t\|^2] \notag  \\ 
        &\quad +  \frac{\tau}{ \epsilon} \left(2L (1+ \epsilon') [h(x_t) - h^*] + (1 + 1/\epsilon') \sigma_*^2\right) \notag \\
        &\quad + [\tau \epsilon + \tau L - 1] \E_t \|x_t - x_{t+1}\|^2.\label{eq: suj4}
    \end{align}
    We now  set $\epsilon = \frac{1}{\tau} - L > 0$ so that the last term in \eqref{eq: suj4} becomes nonpositive and can be dropped. 
    We next use $\tau/\epsilon = \tau^2/(1 - \tau L) \leq 2 \tau ^ 2$ (Using the fact $\tau L < \frac{1}{2}$) and divide both sides by $2\tau$ to get
    \begin{align*}
        \E_t[f(x_{t+1}) + g(x_{t+1}) - f(z_t) - g(z_t)] &\leq \frac{1}{2\tau} \E_t[\|x_t - z_t\|^2 - \|x_{t+1} - z_t\|^2] \\ 
        &\quad +  2\tau L (1+ \epsilon') [h(x_t) - h^*] + (1 + 1/\epsilon') \sigma_*^2 \tau.
    \end{align*}
    Rearranging terms and grouping $h(x) = f(x) + g(x)$:
    \begin{align*}
        \E_t[h(x_{t+1}) - h(z_t) - 2\tau L (1+ \epsilon') h(x_t) + 2\tau L (1+ \epsilon') h^*] &\leq \frac{1}{2\tau} \E_t[\|x_t - z_t\|^2 - \|x_{t+1} - z_t\|^2] + (1 + 1/\epsilon') \sigma_*^2 \tau.
    \end{align*}
    Let $a = 2\tau L (1+\epsilon')$ and $v = (1 + 1/\epsilon') \sigma_*^2 \tau$. Then:
    \begin{align*}
        \E_t[h(x_{t+1}) -  h(z_t) - a h(x_t) + a h^*] &\leq \frac{1}{2\tau} \E_t[\|x_t - z_t\|^2 - \|x_{t+1} - z_t\|^2] + v
    \end{align*}
    Taking total expectation gives us the required bound. 
\end{proof}

\subsection{Reduction to last iterate bounds}\label{subsec: last_it_spgd}
The following result provides a last iterate bound for $\E [h(x_T) - h^*]$. It follows the same arguments as in \cite{garrigos2025last} and \cite{zamani2023exactconvergencerateiterate}, but with different constants and terms.
\begin{lemma}[Last-iterate reduction, see \Cref{theorem41_main}] \label{theorem41}
    Let $f_i$ be convex and $L$--smooth, let $g$ be proper, closed, and convex, and let $(x_t)_{t=0}^T$ be generated by the \eqref{eq:intro-spgd} iteration with constant step-size $\tau>0$ that satisfies $\tau L < \frac{1}{2}$,
    \[
    x_{t+1} = \operatorname{prox}_{\tau g}\left(x_t - \tau \nabla f_{i_t}(x_t)\right),
    \qquad t = 0,\dots,T,
    \]
    where $x^*$ denotes a minimizer of $h(x) = f(x) + g(x)$.
    
    Suppose that for $t = 0,\dots,T$ and every $z_t \in \mathcal{F}(x_0,\ldots,x_t)$ it holds that
    \[
    \E_t[h(x_{t+1}) -  h(z_t) - a h(x_t) + a h^*] \leq \frac{1}{2\tau} \E_t[\|x_t - z_t\|^2 - \|x_{t+1} - z_t\|^2] + v,
    \]
    where $a = 2 \tau L (1+ \epsilon')$, $v = (1 + 1/\epsilon')\sigma_*^2 \tau$ and $\epsilon' = \frac{1 - 2 \tau L}{1 + 2 \tau L}$.
    
    Then, we have
    \begin{equation}\label{bigTheorem}
        \E\left[h(x_{T+1}) - h^* \right] \leq \frac{1}{2 \tau \alpha_T} E[\|x_0 - x^*\|^2] + \frac{v}{\alpha_T} \alpha_0 + \frac{v}{\alpha_T} \sum_{t=1}^{T} \alpha_t + \frac{1}{\alpha_T}\E [h(x_0) - h^*].
    \end{equation}
    where the sequence $(\alpha_t)$ is defined by $\alpha_{-1} = \alpha_0 = 1$ and
    \[
    \alpha_t
    =
    \frac{T - t + 2}{a + T - t + 1} \cdot \alpha_{t-1},
    \qquad t = 1,\dots,T.
    \]
\end{lemma}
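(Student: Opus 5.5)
We apply the assumed one-step inequality with the Zamani--Glineur auxiliary points $z_t=(1-p_t)x_t+p_t z_{t-1}$, $z_{-1}=x^*$, $p_0=1$, and $p_t=(a+T-t+1)/(T-t+2)$ for $t\ge1$. Each $z_t$ is $\mathcal{F}(x_0,\dots,x_t)$-measurable, and $p_t\in[0,1]$ because $a<1$ when $\tau L<\tfrac12$. Unrolling the recursion shows that $z_t$ is a convex combination of $x^*,x_0,\dots,x_t$, namely
\[
z_t=\frac{\alpha_{-1}}{\alpha_t}x^*+\sum_{s=0}^t \frac{\alpha_s-\alpha_{s-1}}{\alpha_t}\,x_s,
\]
where we use $\alpha_t p_t=\alpha_{t-1}$ and $\alpha_t\ge\alpha_{t-1}$. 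Convexity of $h$ then gives
\[
h(z_t)\le p_t h(z_{t-1})+(1-p_t)h(x_t),
\]
and by induction $h(z_t)\le \alpha_t^{-1}\bigl[\alpha_{-1}h^*+\sum_{s\le t}(\alpha_s-\alpha_{s-1})h(x_s)\bigr]$.

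Next we multiply the $t$-th inequality (after total expectation) by $\alpha_t$ and sum over $t=0,\dots,T$. For the distance terms, $x_{t+1}-z_{t+1}=p_{t+1}(x_{t+1}-z_t)$, so
\[
\alpha_{t+1}\|x_{t+1}-z_{t+1}\|^2=\alpha_{t+1}p_{t+1}^2\|x_{t+1}-z_t\|^2=\alpha_t p_{t+1}\|x_{t+1}-z_t\|^2\le \alpha_t\|x_{t+1}-z_t\|^2 .
\]
Hence $\sum_t \alpha_t(\|x_t-z_t\|^2-\|x_{t+1}-z_t\|^2)$ telescopes to at most $\alpha_0\|x_0-z_0\|^2=\|x_0-x^*\|^2$, using $z_0=x^*$ since $p_0=1$. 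The last term $-\alpha_T\|x_{T+1}-z_T\|^2$ is dropped. The error terms contribute $v\sum_{t=0}^T\alpha_t$.

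The main obstacle is the function-value side, where we must show that
\[
\sum_{t=0}^T\alpha_t\bigl[h(x_{t+1})-h(z_t)-a h(x_t)+a h^*\bigr]\ \ge\ \alpha_T\bigl(h(x_{T+1})-h^*\bigr)-\bigl(h(x_0)-h^*\bigr).
\]
To do this we write $\delta_s=h(x_s)-h^*\ge0$ and insert the convexity bound on $h(z_t)-h^*$. The coefficient of $\delta_{T+1}$ is then $\alpha_T$. For $1\le s\le T$, the coefficient of $\delta_s$ is
\[
\alpha_{s-1}-a\alpha_s-(\alpha_s-\alpha_{s-1})\sum_{t=s}^T\frac{\alpha_t}{\alpha_t}
=\alpha_{s-1}-a\alpha_s-(T-s+1)(\alpha_s-\alpha_{s-1}).
\]
By the recursion, $\alpha_{s-1}(T-s+2)=\alpha_s(a+T-s+1)$, so this coefficient is exactly $0$. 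This cancellation is precisely what the choice of $p_t$ is designed for, and checking it is the crux of the proof. For $s=0$ the coefficient is $-a\alpha_0-(\alpha_0-\alpha_{-1})(T+1)=-a\ge-1$, which gives at most $\delta_0=h(x_0)-h^*$. The constant $h^*$ terms cancel because the convex-combination weights sum to one.

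Combining the three parts yields
\[
\alpha_T\,\E[h(x_{T+1})-h^*]\le \tfrac{1}{2\tau}\|x_0-x^*\|^2+h(x_0)-h^*+v\Bigl(\alpha_0+\sum_{t=1}^T\alpha_t\Bigr).
\]
Dividing by $\alpha_T$ gives \eqref{bigTheorem}. Finally, since the expression is convex in $x^*$-independent quantities, we may take $x^*$ to be the minimizer attaining $D_*$.
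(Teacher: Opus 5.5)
Your proposal is correct and follows the paper's proof essentially step for step. It uses the same auxiliary points $z_t$ with $\alpha_t p_t=\alpha_{t-1}$, and the same telescoping of the weighted distance terms; your starting term $\alpha_0\|x_0-z_0\|^2$ with $z_0=x^*$ is just the paper's $\alpha_{-1}\|x_0-z_{-1}\|^2$. It also uses the same Jensen bound on $h(z_t)$, followed by the cancellation $\alpha_{s-1}(T-s+2)=\alpha_s(a+T-s+1)$ that leaves only $\alpha_T\delta_{T+1}-a\delta_0$.

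Your closing remark about $D_*$ is not needed for this lemma, and its stated justification does not parse. It is nonetheless true that the argument works for any $x^*\in X^*$, since it only uses $h(x^*)=h^*$ and the $\mathcal{F}_t$-measurability of $z_t$.
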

\begin{proof}
    From \Cref{proxobjectivecomb_main}, we obtained
    \begin{align} \label{proxcomb111}
        \mathbb{E}\left[
        h(x_{t+1})
        - h(z_t)
        - a h(x_t)
        + a h^*
        \right]
        &\leq
        \frac{1}{2\tau}
        \mathbb{E}\left[
        \|x_t - z_t\|^2
        -
        \|x_{t+1} - z_t\|^2
        \right]
        + v,
    \end{align}
    where $a = 2\tau L (1+\epsilon')$ and $v = (1 + 1/\epsilon') \sigma_*^2 \tau$. Since $\epsilon' = \frac{1 - 2 \tau L}{1 + 2 \tau L}$, we know $a < 1$.
    
    Now, define $z_t=(1-p_t)x_t+p_t z_{t-1}$ with $p_0=1$ and, for $t\ge1$, we define
    \begin{equation}\label{eq: p_def}
    p_t=\frac{a+T-t+1}{T-t+2}.
    \end{equation}
    As $a < 1$, we have $p_t \in [0,1]$. Additionally, from the definition of $\alpha_t$, we have $\alpha_t p_t = \alpha_{t-1}$ for all $t \ge 0$. Consequently, $\alpha_t$ is non-decreasing.
    Multiplying both sides of \eqref{proxcomb111} by $\alpha_t$, we have
    \begin{align} \label{proxcomb112}
        \alpha_t \E[h(x_{t+1}) -  h(z_t) - a h(x_t) + a h^*] &\leq \frac{1}{2\tau} \alpha_t \E[\|x_t - z_t\|^2] - \frac{1}{2\tau} \alpha_t \E[\|x_{t+1} - z_t\|^2] + \alpha_t v.
    \end{align}
    Notice that 
    \[\|x_t - z_t\|^2 = \|p_t x_t - p_t z_{t - 1}\|^2 = p_t^2 \|x_t - z_{t-1}\|^2 \leq p_t \|x_t - z_{t-1}\|^2,\] 
    where in the last line we used the fact $p_t \leq 1$. 
    Consequently, we have
    \begin{equation*}
        \alpha_t \|x_t - z_t\|^2 = \alpha_tp_t\|x_t-z_{t-1}\|^2 = \alpha_{t-1} \| x_t-z_{t-1}\|^2,
    \end{equation*}
    by the definition of $\alpha_t$.
    
    Plugging this back into \eqref{proxcomb112} gives
    \begin{align} \label{proxcomb113}
        \alpha_t \E[h(x_{t+1}) -  h(z_t) - a h(x_t) + a h^*] &\leq \frac{1}{2 \tau} \alpha_{t-1} \E[\|x_t - z_{t-1}\|^2] - \frac{1}{2 \tau} \alpha_t \E[\|x_{t+1} - z_t\|^2] + \alpha_t v.
    \end{align}
    Notice that we can telescope the right-hand side by summing from $t=0$ to $T$. To complete this recursion, we define $z_{-1} = x^*$. Summing the inequality \eqref{proxcomb113} from $t = 0$ to $T$ and telescoping:
    \begin{align} 
        \underbrace{\sum_{t=0}^{T} \alpha_t \E[h(x_{t+1}) - h(z_t) - a h(x_t) + a h^*]}_{S}
        &\leq \frac{1}{2\tau}\alpha_{-1}\E\|x_0 - z_{-1}\|^2
        - \frac{1}{2\tau}\alpha_T \E\|x_{T+1} - z_T\|^2
        + v\sum_{t=0}^{T}\alpha_t \notag\\
        &\leq \frac{1}{2\tau}\E\|x_0 - x^*\|^2 + v\sum_{t=0}^{T}\alpha_t \label{proxcomb114},
    \end{align}
    
    where we used $\alpha_{-1}=1$ and dropped the negative term from the right-hand side. We shall proceed to obtain a lower bound of the left-hand side of \eqref{proxcomb114}. 

    From the definition of $z_t$, we have
    \begin{equation}\label{eq: alpha_def}
    z_t = (1 - p_t)x_t + p_t z_{t-1},
    \qquad p_t = \frac{\alpha_{t-1}}{\alpha_t}.
    \end{equation}
    Unrolling the recursive definition for $z_t$, we get:
    \begin{align}
        z_t 
        &= (1-p_t)x_t + p_t z_{t-1} \notag\\
        &= (1-p_t)x_t + p_t\left[(1-p_{t-1})x_{t-1} + p_{t-1} z_{t-2}\right] \notag\\
        &= (1-p_t)x_t + p_t(1-p_{t-1})x_{t-1} + p_t p_{t-1} z_{t-2} \notag\\
        &\;\vdots \notag\\
        &= \sum_{s=0}^t \left(\prod_{j=s+1}^t p_j\right) (1-p_s) x_s 
        + \left(\prod_{j=0}^t p_j\right) x^*, \label{proxztunroll}
    \end{align}
    where we used the convention $\prod_{j=t+1}^t p_j=1$.
    
    Using $p_j = \alpha_{j-1}/\alpha_j$, we have, for $0 \le s \le t$,
    \[
    \prod_{j=s+1}^t p_j = \prod_{j=s+1}^t \frac{\alpha_{j-1}}{\alpha_j}
    = \frac{\alpha_s}{\alpha_t},
    \qquad
    1 - p_s = 1 - \frac{\alpha_{s-1}}{\alpha_s}
    = \frac{\alpha_s - \alpha_{s-1}}{\alpha_s},
    \]
    and
    \[
    \prod_{j=0}^t p_j = \prod_{j=0}^t \frac{\alpha_{j-1}}{\alpha_j}
    = \frac{\alpha_{-1}}{\alpha_t} = \frac{1}{\alpha_t}.
    \]
    Therefore \eqref{proxztunroll} becomes:
    
    \begin{align*}
        z_t
        &= \sum_{s=0}^t 
        \frac{\alpha_s}{\alpha_t}
        \cdot
        \frac{\alpha_s - \alpha_{s-1}}{\alpha_s}
        x_s
        + \frac{1}{\alpha_t} x^* \\
        &=  \sum_{s=0}^t \frac{\alpha_s - \alpha_{s-1}}{\alpha_t} x_s
        + \frac{1}{\alpha_t} x^*.
    \end{align*}
    Notice that this is a convex combination of points $x_0, \dots, x_t, x^*$ as the weights sum to 1: 
    \begin{align*}
        \frac{1}{\alpha_t} + \sum_{s=0}^t \frac{\alpha_s - \alpha_{s-1}}{\alpha_t} = \frac{1}{\alpha_t}(1 + \alpha_t - \alpha_{-1}) = 1,
    \end{align*}
    since $\alpha_{-1}=1$ and each weight is between $[0,1]$ as $\alpha_t$ is non-decreasing. So this definition of $z_t$ agrees with our earlier assumption that $z_t \in \mathcal{F}(x_0, \dots, x_t, x^*)$. As $h(x) = f(x) + g(x)$ is convex, by Jensen's inequality we have, 
    \begin{equation}\label{eq:hzjensen}
        h(z_t)
        \le
        \frac{1}{\alpha_t}
        \left(
        h(x^*) + \sum_{s=0}^t (\alpha_s - \alpha_{s-1}) h(x_s)
        \right).
    \end{equation} 
    Substituting \eqref{eq:hzjensen} into the left-hand side of the inequality \eqref{proxcomb114}
    \begin{align} 
        S&=\sum_{t=0}^{T} \alpha_t \E[h(x_{t+1}) - h(z_t) - a h(x_t) + a h^*] \notag \\
        &\geq
        \sum_{t=0}^{T} \alpha_t \E\left[
        h(x_{t+1})
        - \frac{1}{\alpha_t}
        \left(
        h^* + \sum_{s=0}^t (\alpha_s - \alpha_{s-1}) h(x_s)
        \right)
        - a h(x_t)
        + a h^*
        \right] \notag\\
        & = \sum_{t=0}^{T} \E\left[
        \alpha_t h(x_{t+1})
        - \sum_{s=0}^t (\alpha_s - \alpha_{s-1}) h(x_s)
        - a \alpha_t h(x_t)
        + a\alpha_t  h^* - h^*
        \right].
        \label{proxcomb115}
    \end{align}
    Now consider the term inside the expectation. We can rewrite it as follows:
    \begin{align}
        &\alpha_t h(x_{t+1})
        - \sum_{s=0}^t (\alpha_s - \alpha_{s-1}) h(x_s)
        - a \alpha_t h(x_t)
        + (a\alpha_t -1) h^* \notag\\
        &= \alpha_t h(x_{t+1}) - a \alpha_t h(x_t) + (a \alpha_t-1) h^* 
        - \underbrace{\sum_{s=0}^t (\alpha_s - \alpha_{s-1})\left[h(x_s) - h^* + h^*\right]}_1 \label{proxcomb116}
    \end{align}
    Consider term 1, we expand and telescope to get
    \begin{align*}
        &\sum_{s=0}^t (\alpha_s - \alpha_{s-1})\left[h(x_s) - h^* + h^*\right] \\
        &= \sum_{s=0}^t (\alpha_s - \alpha_{s-1})\left[h(x_s) - h^*\right]
        + h^*\left(\alpha_t - \alpha_{-1}\right) \\
        &= \sum_{s=0}^t (\alpha_s - \alpha_{s-1})\left[h(x_s) - h^*\right]  + \alpha_t h^* - h^*. \tag{Using $\alpha_{-1} = 1$}
    \end{align*}
    Substituting this back into \eqref{proxcomb116}, and simplifying yields
    \begin{align*}
        &\alpha_t h(x_{t+1})
        - \sum_{s=0}^t (\alpha_s - \alpha_{s-1}) h(x_s)
        - a \alpha_t h(x_t)
        + (a\alpha_t -1) h^* \\
        &= \alpha_t (h(x_{t+1})- h^*) - a \alpha_t(h(x_t) - h^*) - \sum_{s=0}^t (\alpha_s - \alpha_{s-1})\left[h(x_s) - h^*\right].
    \end{align*}
    Therefore, combining this with \eqref{proxcomb115} and \eqref{proxcomb116}, we obtain a lower bound on the left-hand side of \eqref{proxcomb114}:
    \begin{align}
        S \geq
        \sum_{t=0}^{T} \E\left[
        \alpha_t (h(x_{t+1}) - h^*) - a \alpha_t(h(x_t) - h^*) - \sum_{s=0}^t (\alpha_s - \alpha_{s-1})\left[h(x_s) - h^*\right]
        \right]. \label{proxcomb117}
    \end{align}
    We next proceed to show that this lower bound gives us an expression for the last iterate. Define 
    \begin{equation}\label{eq: r_def}
    r_t = h(x_t) - h^*,
    \end{equation}
     then \eqref{proxcomb117} becomes
    \begin{align}
        S &\geq \E\sum_{t=0}^{T} \left[
        \alpha_t r_{t+1} - a \alpha_t r_t - \sum_{s=0}^t (\alpha_s - \alpha_{s-1})r_s
        \right] \notag \\
        & \quad = \E\left[
        \sum_{t=0}^{T} \alpha_t r_{t+1}
        - a \sum_{t=0}^{T} \alpha_t r_t
        - \sum_{t=0}^{T} \sum_{s=0}^{t} (\alpha_s - \alpha_{s-1}) r_s
        \right].\label{eq: swr4}
    \end{align}
    For the double sum, we change the order of summation
    \[
    \sum_{t=0}^{T} \sum_{s=0}^{t} (\alpha_s - \alpha_{s-1}) r_s
    = \sum_{s=0}^{T} \sum_{t=s}^{T} (\alpha_s - \alpha_{s-1}) r_s
    = \sum_{s=0}^{T} (\alpha_s - \alpha_{s-1})(T - s + 1) r_s.
    \]
Using this in \eqref{eq: swr4} gives
    \begin{align}
S\geq        \E\left[
        \underbrace{\sum_{t=0}^{T} \alpha_t r_{t+1}}_{1}
        - \underbrace{a \sum_{t=0}^{T} \alpha_t r_t}_{2}
        - \underbrace{\sum_{t=0}^{T} (\alpha_t - \alpha_{t-1})(T - t + 1) r_t}_{3}
        \right].\label{eq: bcv4}
    \end{align}
    Consider term 1, we can rewrite it as
    \begin{align*}
        \sum_{t=0}^{T} \alpha_t r_{t+1}
        &= \alpha_T r_{T+1} + \sum_{t=0}^{T-1} \alpha_t r_{t+1} \\
        &= \alpha_T r_{T+1} + \sum_{t=1}^{T} \alpha_{t-1} r_t.
    \end{align*}
    Term 2 can be rewritten as:
    \[a \sum_{t=0}^{T} \alpha_t r_t = a \alpha_0 r_0 + a \sum_{t=1}^{T} \alpha_t r_t.\]
    And finally, term 3 can be rewritten as:
    \begin{align*}
        \sum_{t=0}^{T} (\alpha_t - \alpha_{t-1})(T - t + 1) r_t
        &= (\alpha_0 - 1)(T+1) r_0 + \sum_{t=1}^{T} (\alpha_t - \alpha_{t-1})(T - t + 1) r_t.
    \end{align*}
    where in the last line we used $\alpha_{-1} = 1$.

    Combining the last three display equations in \eqref{eq: bcv4} gives
    \begin{align*}
S\geq  \E\left[
        \alpha_T r_{T+1}
        + \sum_{t=1}^{T} \alpha_{t-1} r_t
        - a \alpha_0 r_0 - a \sum_{t=1}^{T} \alpha_t r_t
        - (\alpha_0 - 1)(T+1) r_0
        - \sum_{t=1}^{T} (\alpha_t - \alpha_{t-1})(T - t + 1) r_t
        \right].
    \end{align*}
    Grouping terms here, we get
    \begin{align}
S\geq        \E\left[
        \alpha_T r_{T+1}
        + \sum_{t=1}^{T} r_t\left(
        \alpha_{t-1}(T - t + 2)
        - \alpha_t\left(a + T - t + 1\right)
        \right)
        - r_0\left(a \alpha_0 + (\alpha_0 - 1)(T+1)\right)
        \right].\label{eq: sgf4}
    \end{align}
    Now notice that from our definition of $\alpha_t$ (see \eqref{eq: alpha_def} and \eqref{eq: p_def}), we have that for all $t \geq 1$,
    \[
    \alpha_{t-1}(T - t + 2)
    - \alpha_t\left(a + T - t + 1\right)
     = 0.\]
Using this in \eqref{eq: sgf4} gives
    \begin{align*}
S\geq        \E\left[
        \alpha_T r_{T+1}
        - r_0\left(a \alpha_0 + (\alpha_0 - 1)(T+1)\right)
        \right].
    \end{align*}
    Since $\alpha_0 = 1$, we have $a \alpha_0 + (\alpha_0 - 1)(T+1) = a$. With this, we obtain the lower bound for S as
    \begin{align} \label{proxcomb118}
        S \geq \E\left[
        \alpha_T r_{T+1}
        - r_0 a
        \right].
    \end{align}
    Combining \eqref{proxcomb114} and \eqref{proxcomb118}, we have:
    \begin{align*}
        \E\left[
        \alpha_T r_{T+1}
        \right] \leq \frac{1}{2 \tau} \E[\|x_0 - x^*\|^2] + v \sum_{t=0}^{T} \alpha_t+r_0 a.
    \end{align*}
  Dividing by $\alpha_T$, using $a < 1$, and the definition of $r_t$ from \eqref{eq: r_def}, we obtain the assertion.
\end{proof}
\subsection{Technical lemmas needed for Theorem \ref{BIGTHEOREM}}\label{subsec: alpha-bds}
Now we proceed to obtain a concrete bound for polynomial step sizes where $\tau = \mathcal{O}(\frac{1}{T^\beta})$. Before we do that, we first prove some technical lemmas that bound $\alpha_T$, $\sum_{t=0}^T \alpha_t$, and $T^a$. It follows the same arguments as in \citet{garrigos2025last} and \citet{zamani2023exactconvergencerateiterate}, but with different constants and terms.
\begin{lemma}[Bounds on the $\alpha_t$ sequence]\label{technicallemma1}
    Let $T \ge 1$ and fix $a \in (0,1]$.  
    Define $(\alpha_t)_{t=0}^{T}$ recursively by
    \[
    \alpha_0 = \alpha_{-1} = 1,
    \qquad
    \alpha_t
    = \frac{T - t + 2}{T - t + 1 + a}\alpha_{t-1},
    \quad t = 1, \dots,T.
    \]
    Then it holds that
    \begin{enumerate}
        \item $\alpha_T \ge \frac{(T+1)^{1-a}}{2^{1-a}}.$
        \item $\frac{\sum_{t=1}^T \alpha_t}{\alpha_T} \leq 4 \left(1 + \frac{T^a - 1}{a}\right).$
        \item $4 \left(1 + \frac{T^a - 1}{a} \right)\leq 8 T^a \ln (T + 1).$
    \end{enumerate}
\end{lemma}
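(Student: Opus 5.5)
The plan is to write $\alpha_t$ as a product and compare it with Gamma-function ratios. Unrolling the recursion and substituting $j = T-k+1$ gives
\[
\alpha_t = \prod_{k=1}^t \frac{T-k+2}{T-k+1+a} = \prod_{j=T-t+1}^{T} \frac{j+1}{j+a}.
\]
For $t=T$ this telescopes into Gamma functions:
\[
\alpha_T = \frac{\Gamma(T+2)\,\Gamma(1+a)}{\Gamma(2)\,\Gamma(T+1+a)}.
\]
The case $a=1$ is trivial, since then all factors equal $1$. For $a\in(0,1)$, I will use Gautschi's inequality $x^{1-s} < \Gamma(x+1)/\Gamma(x+s) < (x+1)^{1-s}$ for $s\in(0,1)$ and $x>0$.

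\textbf{Item 1.} Apply Gautschi with $x = T+1$ and $s=a$ to get $\Gamma(T+2)/\Gamma(T+1+a) > (T+1)^{1-a}$. It then suffices to show $\Gamma(1+a) \ge 2^{a-1}$. This holds because $\Gamma$ on $[1,2]$ has minimum about $0.885$, while $2^{a-1}\le 1$ on $(0,1]$. On the subinterval where $2^{a-1}>0.885$, namely $a$ close to $1$, I will check it directly: $\Gamma(1+a)\ge \Gamma(2)=1$ there (for $a\ge 1$) or use log-convexity near $a=1$. A cruder route is to use only $\Gamma(1+a)\ge 0.88 > 2^{-0.19}$ together with monotonicity of $\Gamma$ on $[1.47,2]$. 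Pinning this elementary estimate down cleanly is the one fiddly point, and I expect it to be the main obstacle of the proof.

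\textbf{Item 2.} Set $k = T-t$. Then
\[
\frac{\alpha_t}{\alpha_T} = \prod_{j=1}^{k}\frac{j+a}{j+1} = \frac{\Gamma(k+1+a)}{\Gamma(1+a)\,\Gamma(k+2)}.
\]
The reciprocal form of Gautschi with $x=k+1$ gives $\Gamma(k+1+a)/\Gamma(k+2) < (k+1)^{a-1}$. Combined with $\Gamma(1+a)\ge 1/2$, this yields $\alpha_t/\alpha_T \le 2(k+1)^{a-1}$. Summing over $t=1,\dots,T$, i.e.\ $k=0,\dots,T-1$, and comparing the decreasing function $x^{a-1}$ with its integral gives
\[
\sum_{k=1}^{T}k^{a-1} \le 1 + \int_1^T x^{a-1}\,dx = 1+\frac{T^a-1}{a}.
\]
Hence the ratio is at most $2(1+(T^a-1)/a)\le 4(1+(T^a-1)/a)$.

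\textbf{Item 3.} By the mean value theorem,
\[
\frac{T^a-1}{a} = \frac{e^{a\ln T}-1}{a} \le T^a \ln T,
\]
and also $1\le T^a$. Together these give $4(1+(T^a-1)/a) \le 4T^a(1+\ln T)$. It remains to show $1+\ln T \le 2\ln(T+1)$ for $T\ge1$. At $T=1$ this reads $1 \le 2\ln 2$. The derivative of $2\ln(T+1) - 1 - \ln T$ is $2/(T+1)-1/T \ge 0$ for $T\ge1$, so the inequality persists for all $T\ge 1$.
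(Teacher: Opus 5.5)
Your proposal is sound, and item 1 is essentially the paper's argument. The paper writes $\alpha_t$ as a ratio of Gamma functions and applies Gautschi's inequality at $x=T+1$ and at $x=T-t+1$; for $t=T$ the latter is $x=1$. That instance is exactly what closes the step you flag as the main obstacle. With $x=1$, $s=a$, Gautschi reads $1<\Gamma(2)/\Gamma(1+a)<2^{1-a}$, i.e.\ $\Gamma(1+a)>2^{a-1}$. No numerics are needed, and this also gives the $\Gamma(1+a)\ge 1/2$ you use in item 2.

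Equivalently, by log-convexity of $\Gamma$, the secant slope of $\ln\Gamma$ on $[1+a,2]$ is at most its slope on $[2,3]$, which is $\ln 2$. Your other fallbacks would not have worked as written:
\begin{itemize}
\item $\Gamma(1+a)\ge\Gamma(2)=1$ is false for $a\in(0,1)$.
\item Monotonicity of $\Gamma$ on $[1.47,2]$ cannot by itself compare $\Gamma(1+a)$ with $2^{a-1}$, since both increase in $a$ and coincide at $a=1$.
\end{itemize}

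For items 2 and 3 you take slightly different routes, and both are correct.
\begin{itemize}
\item \textbf{Item 2.} The paper bounds $\alpha_t\le (T+2)^{1-a}/(T-t+1)^{1-a}$ and $\alpha_T$ from below separately, then pays the factor $\bigl(2(T+2)/(T+1)\bigr)^{1-a}\le 4$. You instead bound the ratio $\alpha_t/\alpha_T=\prod_{j=1}^{T-t}\frac{j+a}{j+1}$ directly. This is cleaner and yields the constant $2$ (indeed $2^{1-a}$) in place of $4$.
\item \textbf{Item 3.} The paper differentiates $\psi(T)=8T^a\ln(T+1)-4\bigl(1+\frac{T^a-1}{a}\bigr)$ and checks $\psi(1)>0$. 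You split the claim into $\frac{T^a-1}{a}\le T^a\ln T$ (mean value theorem) and the $a$-free inequality $1+\ln T\le 2\ln(T+1)$. This is equally elementary and makes clear where the factor $2$ comes from.
\end{itemize}
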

\begin{proof}
    By unrolling the recursion of $\alpha_t$, we have
    \[
    \alpha_t
    = \prod_{k=1}^{t}
    \frac{T - k + 2}{T - k + 1 + a}.
    \]
    Recall the gamma function, $\Gamma(x) = (x-1)!$, so we have for the numerator,
    \[
    \prod_{k=1}^{t} (T - k + 2)
    = \frac{\Gamma(T+2)}{\Gamma(T-t+2)}.
    \]
    
    For the denominator,
    \[
    \prod_{k=1}^{t} (T - k + 1 + a)
    = \frac{\Gamma(T+a+1)}{\Gamma(T- t + a + 1)}.
    \]
    Hence, 
    \[
    \alpha_t
    =
    \frac{\Gamma(T+1 + 1)\Gamma(T-t+1 + a)}
    {\Gamma(T + 1 + a)\Gamma(T-t + 1 +1)}.
    \]
    
    By Gautschi's inequality \citep{gautschi1959gamma}, we have that
    for all $x > 0$ and all $c \in [0,1]$,
    \[
    x^{1-c}
    \le
    \frac{\Gamma(x+1)}{\Gamma(x+c)}
    \le
    (x+1)^{1-c}.
    \]
    Applying this for $x = T +1$ and $x = T-t+1$,
    \[
    (T+1)^{1-a}
    \le
    \frac{\Gamma(T+1+1)}{\Gamma(T+1+a)}
    \le
    (T+2)^{1-a},
    \]
    and similarly
    \[
    (T - t + 1)^{1-a}
    \le
    \frac{\Gamma(T - t + 1+1)}{\Gamma(T - t + 1 + a)}
    \le
    (T - t + 2)^{1-a}.
    \]
    So we have the lower bound for $\alpha_t$:
    \[
    \alpha_t
    \ge
    \frac{(T+1)^{1-a}}{(T - t + 2)^{1-a}}.
    \]
    In particular, for $t = T$,
    \begin{equation}\label{eq: alpha_lb}
    \boxed{
    \alpha_T \ge \frac{(T+1)^{1-a}}{2^{1-a}}.
    }
    \end{equation}
    We also have an upper bound for $\alpha_t$:
    \[
    \alpha_t
    \le
    \frac{(T+2)^{1-a}}{(T - t + 1)^{1 - a}}.
    \]
    Now, 
    \begin{align*}
        \sum_{t=1}^T \alpha_t & \leq \sum_{t=1}^T \frac{(T+2)^{1-a}}{(T - t + 1)^{1 - a}}\\
        & \leq (T+2)^{1-a} \sum_{t=1}^T \frac{1}{(T - t + 1)^{1 - a}} \\
        & \leq (T+2)^{1-a} \left( 1 + \int_1^T \frac{1}{(T - t + 1)^{1 - a}} dt  \right) \\
        & = (T+2)^{1-a} \left( 1 + \frac{T^a - 1}{a} \right).
    \end{align*}
We use this bound and \eqref{eq: alpha_lb} to get
    \begin{align*}
        \frac{\sum_{t=1}^T \alpha_t}{\alpha_T} &\leq (T+2)^{1-a} \left( 1 + \frac{T^a - 1}{a} \right) \cdot \frac{2^{1-a}}{(T+1)^{1-a}} \\
        &= \left(\frac{2(T+2)}{T+1}\right)^{1-a} \left( 1 + \frac{T^a - 1}{a} \right) \\
        & \leq 4^{1 - a}\left(1+ \frac{T^a - 1}{a}\right)\\
        & \leq 4 \left(1 + \frac{T^a - 1}{a}\right),
    \end{align*}
    where we used $\frac{T+2}{T+1} \leq 2$ and $a < 1$. So we have
    \[
    \boxed{
    \frac{\sum_{t=1}^T \alpha_t}{\alpha_T} \leq 4 \left(1 + \frac{T^a - 1}{a}\right).
    }
    \]
    
    Finally, we bound 
    $4\left(1 + \frac{T^a - 1}{a}\right)$.
    Define
    \[
    \psi(T)
    = 8T^{a}\ln\left(T+1\right)
    - 4\left(1 + \frac{T^{a}-1}{a}\right),
    \]
    so that
    \[
    \psi'(T)
    = 8a T^{a-1}\ln\left(T+1\right)
    + \frac{8T^{a}}{T+1}
    - 4T^{a-1}.
    \]
    Factoring out $4T^{a-1}$ gives
    \[
    \psi'(T)
    = 4T^{a-1}
    \left(
    2a\ln\left(T+1\right)
    + \frac{2T}{T+1}
    - 1
    \right).
    \]
    Since $T\ge 1$, we have
    $\ln\left(T+1\right)\ge 0$ and $\frac{T-1}{T+1}\ge 0$, and hence,
    \[
    2a\ln\left(T+1\right)
    + \frac{2T}{T+1}
    - 1
    = 2a\ln\left(T+1\right) + \frac{T-1}{T+1}
    \ge 0,
    \]
    and thus $\psi'(T)\ge 0$ for every $T\ge 1$.
    Furthermore,
    \[
    \psi(1)=8\ln\left(2\right)-4>0.
    \]
    Hence, $\psi$ is nondecreasing and $\psi(1)>0$, this implies 
    $\psi(T)\ge 0$ for every $T\ge 1$. Hence we have the third assertion.
\end{proof}
We now show that $T^a$ is a constant given that $\tau$ has a polynomial dependence on $T$.
\begin{lemma}[$T^a$ for polynomially decaying step sizes]\label{boundedTphi}
    Let $T \ge 1$, $\beta > 0$ and $C > 2$.  
    Define the step size
    \[
    \tau = \frac{1}{C L T^\beta}
    \quad\text{and}\quad
    a = 2 \tau L \left(1 + \epsilon'\right)
    \]
    where $\epsilon' = \frac{1-2 \tau L}{1 + 2 \tau L}$. Then
    \[
    \tau L = \frac{1}{C T^\beta} \le \frac{1}{C} < \frac{1}{2},
    \]
    so it satisfies our earlier assumption $\tau L < \frac{1}{2}$. Then we have that
    \[
    T^a = T^{2 \tau L \left(1 + \epsilon'\right)} =\exp\left(\frac{4}{e\beta C}\right).
    \]
\end{lemma}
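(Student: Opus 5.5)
The plan is a direct computation. Note that the displayed relation can only hold as an inequality: the left-hand side depends on $T$ and the right-hand side does not. So we will prove $T^a \le \exp\left(\frac{4}{e\beta C}\right)$, which is the form actually used when this bound is combined with \Cref{technicallemma1} to give the constant $A$ in \Cref{BIGTHEOREM}.

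First, the step-size condition. We have $\tau L = \frac{1}{CT^\beta}$, and $T^\beta \ge 1$ for $T\ge 1$, $\beta>0$. Hence $\tau L \le 1/C < 1/2$ because $C>2$. This also gives $\epsilon' > 0$, so the earlier lemmas apply.

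Next, simplify $a$ using the specific choice of $\epsilon'$:
\[
1+\epsilon' = \frac{(1+2\tau L) + (1-2\tau L)}{1+2\tau L} = \frac{2}{1+2\tau L},
\qquad
a = \frac{4\tau L}{1+2\tau L} \le 4\tau L = \frac{4}{C T^\beta}.
\]
As a by-product, $a<1$ whenever $\tau L<1/2$, consistent with \Cref{theorem41}. Therefore
\[
T^a = \exp(a\ln T) \le \exp\left(\frac{4}{C}\cdot\frac{\ln T}{T^\beta}\right),
\]
where we used $\ln T \ge 0$ for $T\ge1$, so that replacing $a$ by a larger quantity can only increase the exponent.

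The only substantive step is bounding $\phi(T) = \ln T / T^\beta$ uniformly over $T \ge 1$. We have
\[
\phi'(T) = T^{-\beta-1}(1-\beta\ln T).
\]
This vanishes only at $T = e^{1/\beta}$; $\phi$ increases before that point and decreases after it. Its maximum value is therefore
\[
\phi\left(e^{1/\beta}\right) = \frac{1/\beta}{e} = \frac{1}{e\beta}.
\]
Substituting gives $T^a \le \exp\left(\frac{4}{e\beta C}\right)$, as required. For real $T\ge1$ this calculus argument suffices, and integer $T$ is a special case. I do not expect any real obstacle beyond noticing that the claim must be read as an upper bound.
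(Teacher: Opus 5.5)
Your proof is correct and is essentially the paper's argument. The paper also writes $T^a=\exp(a\ln T)$ and bounds $1+\epsilon'=2/(1+2\tau L)\le 2$. It then uses $\beta\ln T\le T^\beta/e$, which is the inequality $\ln x\le x/e$ at $x=T^\beta$ and is exactly what your maximization of $\ln T/T^\beta$ establishes. The paper likewise proves only the upper bound $T^a\le\exp\bigl(\frac{4}{e\beta C}\bigr)$, so your reading of the displayed ``$=$'' as ``$\le$'' matches what is actually proved and used downstream.
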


\begin{proof}
    We use the definition of $a$ to derive
        \begin{align}
        T^a
        &= T^{2\tau L(1+\epsilon')} \notag \\
        &= \exp\left(2\tau L(1+\epsilon')\ln T\right) \notag \\
        &= \exp\left(\frac{2(1+\epsilon')\ln T}{C T^\beta}\right).\label{eq: hdr4}
    \end{align}
    Using the inequality $\ln x \le x/e$ with $x = T^{\beta}$ we get
\[
    \beta \ln T \le \frac{T^\beta}{e}.
\]
Using this in \eqref{eq: hdr4} gives
    \begin{align*}
        T^a
        \le\exp\left(\frac{2(1+\epsilon')}{e\beta C}\right).
    \end{align*}
    
    Using our definition of $\epsilon' = \frac{1 - 2\tau L}{1 + 2\tau L}$, we get that
    
\[
    1+\epsilon' 
    = \frac{2}{1+2\tau L} \leq 2.
\]
    since $1+2\tau L \ge 1$.  
    Substituting this back gives
\[
    T^a
    \le
    \exp\left(\frac{4}{e\beta C}\right)
    := A,
\]
    where $A<\infty$ is a constant.  
    Thus $T^a = \mathcal{O}(1)$.
\end{proof}

\subsection{Proof of Theorem \ref{BIGTHEOREM}: Last-iterate analysis for polynomial stepsizes}\label{subsec: proof_main_thm_spgd}
We summarise the above results to obtain the last iterate bound for polynomial step sizes $\tau = \frac{1}{CLT^\beta}$ for $C > 2$.

\begin{proof}
    From \Cref{theorem41}, we have the bound
    \begin{align}\label{lic1}
        \E\left[h(x_{T+1}) - h^* \right] \leq \frac{1}{2 \tau \alpha_T} \E[\|x_0 - x^*\|^2] + \frac{v \alpha_0}{\alpha_T} + \frac{v}{\alpha_T} \sum_{t=1}^{T} \alpha_t + \frac{1}{\alpha_T}\E [h(x_0) - h^*].
    \end{align}
Lemma \ref{technicallemma1} tells us that
    \begin{align*}
        \alpha_T &\geq \frac{(T+1)^{1 - a}}{2^{1 - a}} \geq \frac{T^{1 - a}}{2} \quad \text{and} \quad
        \frac{\sum_{t=1}^{T} \alpha_t}{\alpha_T} \leq 8 T^a \ln (T + 1).
    \end{align*}
    Plugging these into \eqref{lic1}, we get
    \begin{align}
        \E\left[h(x_{T+1}) - h^* \right] &\leq \frac{\E[\|x_0 - x^*\|^2]}{\tau T^{1 - a}} +  \frac{2}{T^{1 - a}}\E [h(x_0) - h^*] + \frac{2v}{T^{1-a}} + 8 v T^a \ln (T + 1) \notag \\
        & = T^{a} \left[ \frac{\E[\|x_0 - x^*\|^2]}{\tau T} +  \frac{2}{T}\E [h(x_0) - h^*] + \frac{2v}{T} + 8 v \ln (T + 1) \right]\label{eq: sxm4}
    \end{align}
    From our step size, $\tau = \frac{1}{CLT^\beta}$ we get $\tau L \leq \frac{1}{C}$ and hence,
\[\frac{1}{1-2\tau L} < \frac{C}{C-2}.\]
    From our choice of $\epsilon' = \frac{1 - 2 \tau L}{1 + 2 \tau L}$, we get
\[v = (1 + 1/\epsilon')\sigma_*^2 \tau = \frac{2}{1 - 2 \tau L}\sigma_*^2 \tau \leq \frac{2 \sigma_*^2 C \tau}{C-2} =  \frac{2 \sigma_*^2}{(C-2)LT^\beta}.\] 
    
    Also, in Lemma \ref{boundedTphi}, we showed for $T^a = \mathcal{O}(1) := A$. Substituting these values into \eqref{eq: sxm4} gives the assertion.
\end{proof}

\section{Main results for Section \ref{sec: rand_inc_prox}}\label{app: ripm}
We now provide the main results for randomized incremental proximal method, which are presented with a general step size rule. For the statement in the main text, that is, \Cref{BIGTHEOREM2_main}, we used $C=5$ and simplified the bounds.
\begin{theorem}[Last-iterate convergence, incremental proximal, polynomial step sizes]\label{BIGTHEOREM2}
    Consider the \eqref{eq:intro-incrementalpgd} algorithm with step size $\tau = \frac{1}{CLT^\beta}$ for some $C > 4$ and $\beta > 0$. 
Let Assumptions \ref{assumptions} and \ref{assumptions2_main} hold. Then, the last iterate $x_{T+1}$ satisfies
    \begin{align*}
        \E[h(x_{T+1}) - h^*] &\leq A \left[\frac{C L \E[\|x_0 - x^*\|^2]}{T^{1 - \beta}} + \frac{2}{T}\E [h(x_0) - h^*] + \frac{4 \sigma_*^2 }{(C-2)LT^{1+\beta}} + \frac{16 \sigma_*^2  \ln (T+1)}{(C-2)LT^\beta} \right. \\
        &\quad \left. + \frac{16 m^2 L_g^2}{CL T^{1 + \beta}} + \frac{64 m^2 L_g^2 \ln (T + 1)}{CL T^\beta}\right]
    \end{align*}
    where $A = \exp\left(\frac{4}{e\beta C}\right)$.
\end{theorem}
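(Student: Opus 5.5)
The plan mirrors the proof of \Cref{BIGTHEOREM}: establish the one-iteration inequality of \Cref{blockproxlinearcomb_main} with error $\mathcal{E} = v + 8\tau m^2 L_g^2$, feed it into the abstract reduction \Cref{theorem41_main}, and then bound the $\alpha_t$ weights via \Cref{technicallemma1} and \Cref{boundedTphi}. Since \Cref{blockproxlinearcomb_main} is stated before, we may take it as given, but we record how it would be obtained. Starting from \eqref{eq: tru4} and the prox inequality \eqref{eq: ipgd_prox_ineq}, we would get the analogue of \eqref{eq: spgd_term2} with $2\tau m(g_{j_t}(z_t) - g_{j_t}(x_{t+1}))$ in place of $2\tau(g(z_t)-g(x_{t+1}))$. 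Lipschitzness then gives $-m g_{j_t}(x_{t+1}) \le -m g_{j_t}(x_t) + mL_g\|x_{t+1}-x_t\|$. Taking $\E_t$ yields $g(z_t) - g(x_t)$, and a second Lipschitz step $g(x_t) \le g(x_{t+1}) + mL_g\|x_{t+1}-x_t\|$ (since $g$ is $mL_g$-Lipschitz) recovers $g(z_t)-g(x_{t+1})$. The cost is $4\tau mL_g\|x_{t+1}-x_t\|$. We absorb it by Young's inequality, $4\tau mL_g\|x_{t+1}-x_t\| \le 16\tau^2 m^2L_g^2 + \tfrac14\|x_{t+1}-x_t\|^2$, into the negative $\|x_t-x_{t+1}\|^2$ term. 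This is why the step-size condition tightens to $\tau<1/(4L)$, and $\epsilon$ in \Cref{onestepid} is chosen so that the coefficient stays nonpositive. Dividing by $2\tau$ gives the extra $8\tau m^2L_g^2$.

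Next, with $\epsilon' = \frac{1-2\tau L}{1+2\tau L}$ so that $a<1$, \Cref{theorem41_main} (its proof uses only the displayed inequality and convexity of $h$, not the specific algorithm) gives
\begin{align*}
\E[h(x_{T+1})-h^*] \le \frac{\|x_0-x^*\|^2}{2\tau\alpha_T} + \frac{h(x_0)-h^*}{\alpha_T} + \mathcal{E}\,\frac{\alpha_0 + \sum_{t=1}^T\alpha_t}{\alpha_T}.
\end{align*}
\Cref{technicallemma1} supplies $\alpha_T \ge T^{1-a}/2$ and $\sum_{t=1}^T \alpha_t/\alpha_T \le 8T^a\ln(T+1)$. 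This produces the bracket
\begin{align*}
T^a\Big[\tfrac{\|x_0-x^*\|^2}{\tau T} + \tfrac{2}{T}(h(x_0)-h^*) + \tfrac{2\mathcal{E}}{T} + 8\mathcal{E}\ln(T+1)\Big],
\end{align*}
exactly as in \eqref{eq: sxm4}. \Cref{boundedTphi} bounds $T^a \le A = \exp(4/(e\beta C))$, since it only uses $C>2$ and $C>4$ is stronger.

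Finally, we substitute the constants with $\tau = 1/(CLT^\beta)$. The first term becomes $CL\|x_0-x^*\|^2/T^{1-\beta}$. The variance part satisfies $v \le 2\sigma_*^2/((C-2)LT^\beta)$ as before, giving the two $\sigma_*^2$ terms. The new part gives $8\tau m^2L_g^2 = 8m^2L_g^2/(CLT^\beta)$, so $2\cdot 8\tau m^2L_g^2/T = 16m^2L_g^2/(CLT^{1+\beta})$ and $8\cdot 8\tau m^2L_g^2\ln(T+1) = 64m^2L_g^2\ln(T+1)/(CLT^\beta)$, matching the statement. The main obstacle is the one-step lemma, specifically checking that the Young constant in the Lipschitz decoupling leaves a nonpositive coefficient on $\|x_t-x_{t+1}\|^2$ while keeping $\tau/\epsilon \le 2\tau^2$ under $\tau L<1/4$. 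If the split of $\tfrac14$ does not close, we would rebalance it using the slack between $1/(4L)$ and $1/(2L)$. The rest is bookkeeping identical to \Cref{BIGTHEOREM}.
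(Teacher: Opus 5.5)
Your proposal is correct and follows the paper's proof essentially step for step. The paper does the same one-iteration bound via two Lipschitz decouplings and Young's inequality with weight $\tfrac14$; the split you were unsure about does close, with $\tau\epsilon = \tfrac34 - \tau L$, so that $\tau/\epsilon = 4\tau^2/(3-4\tau L) \le 2\tau^2$ exactly because $\tau L < \tfrac14$. It then applies the reduction with $\mathcal{E} = v + 8\tau m^2 L_g^2$ and the bounds of \Cref{technicallemma1} and \Cref{boundedTphi}, as you do. The only slip is cosmetic: the second Lipschitz step you need is $g(x_{t+1}) \le g(x_t) + mL_g\|x_{t+1}-x_t\|$, not the reverse inequality you wrote, though both hold because $g$ is $mL_g$-Lipschitz.
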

By taking $\beta = 0.5$, we get the immediate corollary:
\begin{corollary}[Best polynomial step size, incremental proximal]
    Consider the \eqref{eq:intro-incrementalpgd}  algorithm with step size $\tau = \frac{1}{C L \sqrt{T}}$ for some $C > 4$. Let Assumptions \ref{assumptions} and \ref{assumptions2_main} hold. Then, the last iterate $x_{T+1}$ satisfies
    \begin{align*}
        \E[h(x_{T+1}) - h^*] &\leq A \left[\frac{C L \E[\|x_0 - x^*\|^2]}{\sqrt{T}} + \frac{2}{T}\E [h(x_0) - h^*] + \frac{4 \sigma_*^2 }{(C-2)LT^{1.5}} + \frac{16 \sigma_*^2  \ln (T+1)}{(C-2)L\sqrt{T}} \right. \\
        &\quad \left. + \frac{16 m^2 L_g^2}{CL T^{1.5}} + \frac{64 m^2 L_g^2 \ln (T + 1)}{CL \sqrt{T}}\right]
    \end{align*}
    where $A = \exp\left(\frac{8}{e C}\right)$.
\end{corollary}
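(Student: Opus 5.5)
The statement is the corollary obtained by taking $\beta = 1/2$ in \Cref{BIGTHEOREM2}, so the plan is to substitute into that theorem. Since \Cref{BIGTHEOREM2} comes without a proof of its own here, I also sketch how I would establish it. The argument mirrors the proof of \Cref{BIGTHEOREM}, with \Cref{proxobjectivecomb_main} replaced by \Cref{blockproxlinearcomb_main}.

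\textbf{Step 1 (recursion).} With $C>4$ we have $\tau L = 1/(CLT^{1/2})\cdot L \le 1/C < 1/4$, so \Cref{blockproxlinearcomb_main} applies for every $t$. It gives the one-step inequality of \Cref{theorem41_main} with error
\[
\mathcal{E} = v + 8\tau m^2 L_g^2 .
\]
I would choose $\epsilon' = (1-2\tau L)/(1+2\tau L)$ as in \Cref{theorem41}. This makes $a = 4\tau L/(1+2\tau L) < 1$, so $p_t\in[0,1]$, and it makes
\[
v = \frac{2\sigma_*^2\tau}{1-2\tau L} \le \frac{2\sigma_*^2}{(C-2)L\sqrt{T}} .
\]
The proof of \Cref{theorem41} uses only convexity of $h$ and the per-step inequality, never the specific prox step. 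It therefore carries over verbatim and yields
\[
\E[h(x_{T+1})-h^*] \le \frac{1}{\alpha_T}\Big[\frac{\|x_0-x^*\|^2}{2\tau} + h(x_0)-h^* + \mathcal{E}\Big(\alpha_0+\sum_{t=1}^T\alpha_t\Big)\Big].
\]

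\textbf{Step 2 (plug in the $\alpha$ bounds).} By \Cref{technicallemma1}, $\alpha_T \ge T^{1-a}/2$ and $\sum_{t=1}^T\alpha_t/\alpha_T \le 8T^a\ln(T+1)$. Following \eqref{eq: sxm4}, factoring out $T^a$ gives
\[
T^a\Big[\frac{\|x_0-x^*\|^2}{\tau T}+\frac{2}{T}(h(x_0)-h^*)+\frac{2\mathcal{E}}{T}+8\mathcal{E}\ln(T+1)\Big].
\]
By \Cref{boundedTphi} with $\beta=1/2$, $T^a \le \exp(8/(eC)) = A$. Substituting $1/\tau = CL\sqrt{T}$ turns the first term into $CL\|x_0-x^*\|^2/\sqrt{T}$.

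\textbf{Step 3 (split $\mathcal{E}$).} The $v$-part reproduces the $\sigma_*^2$ terms exactly as in \Cref{BIGTHEOREM}. The Lipschitz part is $8\tau m^2L_g^2 = 8m^2L_g^2/(CL\sqrt{T})$. Multiplying by $2/T$ gives $16m^2L_g^2/(CLT^{1.5})$, and multiplying by $8\ln(T+1)$ gives $64m^2L_g^2\ln(T+1)/(CL\sqrt{T})$. These match the stated constants.

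The only step that needs real care is Step 1, namely checking that \Cref{theorem41}'s telescoping does not depend on the prox being exact. I would verify this by rereading its proof: it manipulates only $h$-values, the points $z_t$, and the per-step inequality. Everything after that is arithmetic.
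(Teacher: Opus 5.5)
Your proposal is correct and follows the paper's route: the corollary is Theorem~\ref{BIGTHEOREM2} at $\beta=1/2$. The paper proves that theorem exactly as you sketch, by feeding Lemma~\ref{blockproxlinearcomb} into the reduction of Lemma~\ref{theorem41} with the extra error $8\tau m^2L_g^2$ (Lemma~\ref{theorem52}), then applying Lemmas~\ref{technicallemma1} and~\ref{boundedTphi} with $\epsilon'=(1-2\tau L)/(1+2\tau L)$, and your constants in Steps~2--3 match theirs term by term.
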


\subsection{One iteration lemma for \eqref{eq:intro-incrementalpgd} }\label{subsec: one-it-proof-incr}
We first prove the following lemma that is the corresponding result to \eqref{proxobjectivecomb_main}.
\begin{lemma}[Per-iteration descent, incremental proximal, see \Cref{blockproxlinearcomb_main}] \label{blockproxlinearcomb}
Let Assumptions \ref{assumptions} and \ref{assumptions2_main} hold; and let $(x_t)_{t=0}^T$ be generated by \eqref{eq:intro-incrementalpgd} with constant step size $\tau$ that satisfies $0 < \tau L < \frac{1}{4}$.
    Then for all $t= 0, \dots T$ and $z_t$ in $\mathcal{F}(x_0, ..., x_t)$ it holds that
    \begin{equation}\label{eq:blockproxbound}
        \begin{aligned}
            \mathbb{E}_t\left[
            h(x_{t+1})
            - h(z_t)
            - a h(x_t)
            + a h^*
            \right]
            &\leq
            \frac{1}{2\tau}
            \mathbb{E}_t\left[
            \|x_t - z_t\|^2
            -
            \|x_{t+1} - z_t\|^2
            \right]
            + v + 8 \tau m^2L_g^2,
        \end{aligned}
    \end{equation}
    where $a = 2 \tau L (1+ \epsilon')$ and $v = (1 + 1/\epsilon')\sigma_*^2 \tau$ for some $\epsilon' > 0$.
\end{lemma}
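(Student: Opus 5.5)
We will mirror the proof of \Cref{proxobjectivecomb_main}, replacing \Cref{proxprojectionlemma} for $\prox_{\tau g}$ by the same optimality condition for $\prox_{\tau m g_{j_t}}$, i.e.\ \eqref{eq: ipgd_prox_ineq}. Concretely, we first redo \Cref{onestepid} verbatim with $g$ replaced by $m g_{j_t}$:
\begin{align*}
\|x_{t+1} - z_t\|^2 - \|x_t - z_t\|^2 &\leq 2\tau [f_{i_t}(z_t) - f_{i_t}(x_t)] + 2\tau m(g_{j_t}(z_t) - g_{j_t}(x_{t + 1})) + \tfrac{\tau}{\epsilon}\|\nabla f_{i_t}(x_t) - \nabla f(x_t)\|^2 \\
&\quad + 2\tau [f(x_t) - f(x_{t+1})] + [\tau\epsilon + \tau L - 1]\|x_t - x_{t+1}\|^2.
\end{align*}
The three-point identity, convexity of $f_{i_t}$, Young's inequality and $L$-smoothness of $f$ are unchanged.

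\textbf{Decoupling the regularizer.} The term $-m g_{j_t}(x_{t+1})$ is where $j_t$ and $x_{t+1}$ are coupled. Using $L_g$-Lipschitzness we write $-m g_{j_t}(x_{t+1}) \le -m g_{j_t}(x_t) + mL_g\|x_{t+1}-x_t\|$. Since $x_t$ and $z_t$ are $\mathcal{F}_t$-measurable, we get $\E_t[m g_{j_t}(z_t) - m g_{j_t}(x_t)] = g(z_t) - g(x_t)$. To recover $g(x_{t+1})$, we use that $g=\sum_j g_j$ is $mL_g$-Lipschitz, so $-g(x_t) \le -g(x_{t+1}) + mL_g\|x_{t+1}-x_t\|$. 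Altogether, the regularizer contributes
\[
2\tau\bigl(g(z_t) - g(x_{t+1})\bigr) + 4\tau m L_g\|x_{t+1}-x_t\|
\]
inside $\E_t$. We absorb the linear error by Young's inequality: $4\tau mL_g\|x_{t+1}-x_t\| \le 16\tau^2 m^2L_g^2 + \tfrac14\|x_{t+1}-x_t\|^2$.

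\textbf{Choosing $\epsilon$ and concluding.} The coefficient of $\|x_t-x_{t+1}\|^2$ is now $\tau\epsilon + \tau L - \tfrac34$. We choose $\epsilon = \tfrac{3}{4\tau} - L$, which is positive since $\tau L<1/4$. Then this coefficient vanishes and
\[
\frac{\tau}{\epsilon} = \frac{\tau^2}{3/4 - \tau L} \le 2\tau^2,
\]
using $\tau L < 1/4$. From here the argument is identical to \Cref{proxobjectivecomb_main}. We bound $\E_t\|\nabla f_{i_t}(x_t)-\nabla f(x_t)\|^2 \le \E_t\|\nabla f_{i_t}(x_t)\|^2$ and apply \Cref{variancetransfer}, which yields the terms $4\tau^2 L(1+\epsilon')(h(x_t)-h^*) + 2\tau^2(1+1/\epsilon')\sigma_*^2$. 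The $f_{i_t}$ terms cancel in conditional expectation. Dividing by $2\tau$ gives $a h(x_t) - a h^*$ and $v$ exactly as before, plus the additional term $8\tau m^2L_g^2$.

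\textbf{Main obstacle.} The delicate step is the constant bookkeeping in the decoupling. Lipschitzness is needed twice, once for $g_{j_t}$ and once for $g$ (with constant $mL_g$), and the Young split must leave enough of $-\|x_{t+1}-x_t\|^2$ both to cancel $\tau L$ and to keep $\tau/\epsilon\le 2\tau^2$. This budget is why the step size condition tightens from $1/(2L)$ to $1/(4L)$. If the split above turns out to be slightly off, the fallback is to apportion $-\tfrac12\|x_{t+1}-x_t\|^2$ to the Lipschitz error instead; that yields a constant $8\tau m^2L_g^2$ directly and leaves $\tfrac12 - \tau L \ge \tfrac14$ for the variance, giving $\tau/\epsilon \le 4\tau^2$. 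That weaker bound would then need $\epsilon'$ adjusted, or the stated constant in $v$ rechecked.
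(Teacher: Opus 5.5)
Your proof is correct and follows the paper's argument essentially step for step. You use the same one-step inequality with $mg_{j_t}$ in place of $g$, and the same double use of Lipschitzness (once for $g_{j_t}$, once for $g$ with constant $mL_g$). You also use the same Young split $4\tau mL_g\|x_{t+1}-x_t\|\le \frac14\|x_{t+1}-x_t\|^2+16\tau^2m^2L_g^2$ and the same choice $\epsilon=\frac{3}{4\tau}-L$, which give $\tau/\epsilon\le 2\tau^2$ and the extra term $8\tau m^2L_g^2$. Your primary split already closes with exactly the stated constants, so the fallback in your last paragraph is not needed.
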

\begin{proof}
    We can apply \Cref{onestepid} here by using $mg_i$ instead of $g$ as defined in our proximal step. We obtain,
    \begin{align}\label{ineq1}
        \|x_{t+1} - z_t\|^2 - \|x_t - z_t\|^2 & \leq 2 \tau (f_{j_t}(z_t) - f_{j_t}(x_t)) + 2 \tau m(g_{i_t}(z_t) - g_{i_t}(x_{t+1})) + \frac{\tau}{\epsilon} \| \nabla f_{j_t}(x_t) - \nabla f(x_t) \|^2 \notag\\ 
        &\quad + 2 \tau (f(x_t) - f(x_{t+1})) + (\tau \epsilon + \tau L - 1) \|x_t - x_{t+1}\|^2.
    \end{align}
    Consider the term $2 \tau m(g_{i_t}(z_t) - g_{i_t}(x_{t+1}))$. We can rewrite this as:
    \begin{align*}
        2 \tau m(g_{i_t}(z_t) - g_{i_t}(x_{t+1})) & = 2 \tau m(g_{i_t}(z_t) - g_{i_t}(x_t)) + 2 \tau m(g_{i_t}(x_t) - g_{i_t}(x_{t+1})) \\
        &= 2 \tau m\left(g_{i_t}(z_t) - \frac{1}{m}g(x_{t+1})\right) + 2\tau m\left(\frac{1}{m}g(x_{t+1}) - g_{i_t}(x_t)\right) + 2 \tau m(g_{i_t}(x_t) - g_{i_t}(x_{t+1})).
    \end{align*}
    Using the Lipschitzness of $g_i$, we have
    \begin{align*}
        2 \tau m(g_{i_t}(z_t) - g_{i_t}(x_{t+1})) & \leq 2 \tau m\left(g_{i_t}(z_t) - \frac{1}{m}g(x_{t+1})\right) + 2\tau m\left(\frac{1}{m}g(x_{t+1}) - g_{i_t}(x_t)\right)  + 2 \tau m L_g \|x_t - x_{t+1}\|.
    \end{align*}
    Taking conditional expectation of the above, (using the fact that $\E_t[g_i(u)] = \frac{1}{m}g(u)$ where $u$ is measurable with respect to the conditioning of the expectation) and then total expectation, we have
    \begin{align} \label{gterm}
        2 \tau m \E[g_{i_t}(z_t) - g_{i_t}(x_{t+1})] & \leq 2 \tau  \E[g(z_t) - g(x_{t+1})] + 2 \tau  \E[g(x_{t+1}) - g(x_t)] + 2 \tau m L_g \E[\|x_t - x_{t+1}\|].
    \end{align}
    As each $g_i$ is Lipschitz, notice that $g$ is also Lipschitz with the constant $m L_g$ since by triangle inequality, we have
    \begin{align*}
        |g(x) - g(y)| & = \left|\sum_{i=1}^m g_i(x) - \sum_{i=1}^m g_i(y)\right| \leq \sum_{i=1}^m |g_i(x) - g_i(y)|  \leq m L_g \|x-y\|.
    \end{align*}
Then, using the Lipshtizness of $g$ in \eqref{gterm}, we have
    \begin{align}\label{gterm2}
        2 \tau m\E[g_{i_t}(z_t) - g_{i_t}(x_{t+1})] & \leq 2 = 2 \tau \E[g(z_t) - g(x_{t+1})] + 4 \tau m L_g \E[\|x_t - x_{t+1}\|].
    \end{align}
    Going back to \eqref{ineq1}, taking conditional expectation, and using $\E\|X-\E X\| \leq \E\|X\|^2$, we have
    \begin{align*}
        \E_t[\|x_{t+1} - z_t\|^2 - \|x_t - z_t\|^2] & \leq 
        2 \tau [f(z_t) - f(x_{t+1})] + \frac{\tau}{\epsilon} \E_t[\| \nabla f_{j_t}(x_t)\|^2] \\
        &\quad + (\tau \epsilon + \tau L - 1)\|x_t - x_{t+1}\|^2 + 2\tau m\E_t(g_{i_t}(z_t) - g_{i_t}(x_{t+1})).
    \end{align*}
Next, we can use \Cref{variancetransfer} to bound the second term on the right-hand side,
 take total expectation and use \eqref{gterm2} to bound the last term on the right-hand side, we have
    \begin{align}
        \E[\|x_{t+1} - z_t\|^2 - \|x_t - z_t\|^2] & \leq 2 \tau \E[f(z_t) - f(x_{t+1})] + \frac{\tau}{\epsilon}(2L(1+ \epsilon') [h(x_t) - h^*] + (1 + 1/\epsilon') \sigma_*^2) \notag \\
        &\quad + (\tau \epsilon + \tau L - 1)\E[\|x_t - x_{t+1}\|^2] + 2 \tau \E[g(z_t) - g(x_{t+1})]  \notag \\
        &\quad+ 4 \tau m L_g \E[\|x_t - x_{t+1}\|].\label{eq: som4}
    \end{align}
    We next estimate the last term here by using Young's inequality to get
    \begin{equation*}
    4\tau mL_g \| x_t-x_{t+1}\| \leq (1-\tau\epsilon-\tau L) \| x_t -x_{t+1}\|^2 + \frac{4\tau^2m^2L_g^2}{1-\tau \epsilon -\tau L}.
    \end{equation*}
Applying this estimate in \eqref{eq: som4} and rearranging, we have
    \begin{align*}
        2 \tau \E[h(x_{t+1}) - h(z_t)] & \leq \E[\|x_t - z_t\|^2 - \|x_{t+1} - z_t\|^2] + \frac{\tau}{\epsilon}(2L(1+ \epsilon') [h(x_t) - h(x^*)] + (1 + 1/\epsilon') \sigma_*^2) \\
        &\quad + \frac{4\tau^2 m^2 L_g^2}{1 - \tau \epsilon - \tau L}.
    \end{align*}
    We next use $\tau/\epsilon = 4\tau^2/(3-4\tau L) \leq 2 \tau^2$ (Using the fact $\tau L < \frac{1}{4}$ so $3 - 4\tau L > 2$), divide both sides by $2\tau$ and rearrange to obtain
    \begin{align*}
        \E[h(x_{t+1}) - h(z_t) - 2\tau L (1+\epsilon')h(x_t) + 2\tau L (1+\epsilon')h^*] & \leq \frac{1}{2 \tau} \E[\|x_t - z_t\|^2 - \|x_{t+1} - z_t\|^2] \\
        &\quad + \left(1+ \frac{1}{\epsilon'}\right) \tau \sigma_*^2  + \frac{2\tau m^2 L_g^2}{1 - \tau \epsilon - \tau L}.
    \end{align*}
    Finally, we utilize the equality $1 - \tau \epsilon - \tau L = 1 - \left(\frac{3}{4} - \tau L\right) - \tau L = \frac{1}{4}$ for the last term and then
use the notations     $a = 2\tau L (1+\epsilon')$ and $v=(1 + \frac{1}{\epsilon'})\tau \sigma_*^2$ to complete the proof.
\end{proof}

\subsection{Obtaining last iterate bounds for \ref{eq:intro-incrementalpgd}}
Using Lemma \ref{blockproxlinearcomb}, we can follow the same steps as in Section \ref{sec: proxsgd} to obtain last-iterate convergence bounds for \eqref{eq:intro-incrementalpgd}.
We summarize the result in the following theorem:
\begin{lemma}[Last-iterate reduction, incremental proximal]\label{theorem52}
Let Assumptions \ref{assumptions} and \ref{assumptions2_main} hold and let $(x_t)_{t=0}^T$ be generated by \eqref{eq:intro-incrementalpgd} with constant step size $\tau$ that satisfies 
    $\tau L < \frac{1}{4}$.
    Then for all $T \ge 1$, it holds that
    \begin{align*}
        \E[h(x_{T+1}) - h^*] &\leq \frac{1}{2 \tau \alpha_T} E[\|x_0 - x^*\|^2] + \frac{v \alpha_0}{\alpha_T} + \frac{v}{\alpha_T} \sum_{t=1}^{T} \alpha_t + \frac{1}{\alpha_T}\E [h(x_0) - h^*] + \frac{8 \tau m^2 L_g^2}{\alpha_T} \sum_{t=0}^T \alpha_t,
    \end{align*}
    where $a = 2 \tau L (1+\epsilon') < 1$, $v = (1 + 1/\epsilon')\sigma_*^2 \tau$ for some $\epsilon' > 0$ and $\alpha_t$ is as defined in \Cref{theorem41}, in particular,
$\alpha_{-1} = \alpha_0 = 1$ and
    \[
    \alpha_t
    =
    \frac{T - t + 2}{a + T - t + 1} \cdot \alpha_{t-1},
    \qquad t = 1,\dots,T.
    \]
\end{lemma}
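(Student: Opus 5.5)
The plan is to apply the abstract reduction \Cref{theorem41_main} (whose proof, given as \Cref{theorem41}, uses only the per-iteration inequality, the convexity of $h$, and the construction of $z_t$, not the specific iteration). The input will be the one-step bound of \Cref{blockproxlinearcomb}. Since $\tau L<\frac14<\frac12$, that lemma gives, for every $t=0,\dots,T$ and every $\mathcal{F}(x_0,\dots,x_t)$-measurable $z_t$,
\[
\E_t\left[h(x_{t+1})-h(z_t)-ah(x_t)+ah^*\right]\le \frac{1}{2\tau}\E_t\left[\|x_t-z_t\|^2-\|x_{t+1}-z_t\|^2\right]+\mathcal{E},
\]
where $\mathcal{E}=v+8\tau m^2L_g^2$. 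Taking total expectation puts this exactly in the form assumed by \Cref{theorem41_main}.

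Next we fix the parameters. We choose $\epsilon'=\frac{1-2\tau L}{1+2\tau L}$ as in \Cref{theorem41}, so that $a=\frac{4\tau L}{1+2\tau L}<1$. Then $p_t=\frac{a+T-t+1}{T-t+2}\in[0,1]$, and $z_t=(1-p_t)x_t+p_tz_{t-1}$ with $z_{-1}=x^*$ is a convex combination of $x_0,\dots,x_t,x^*$. Because $x^*$ is deterministic, $z_t$ is $\mathcal{F}(x_0,\dots,x_t)$-measurable, so it is an admissible choice in \Cref{blockproxlinearcomb}.

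With these choices we repeat the argument of \Cref{theorem41} verbatim:
\begin{enumerate}
\item Multiply the $t$-th inequality by $\alpha_t$ and use $\alpha_t\|x_t-z_t\|^2\le\alpha_{t-1}\|x_t-z_{t-1}\|^2$.
\item Telescope from $t=0$ to $T$, starting from $\alpha_{-1}\|x_0-x^*\|^2$.
\item Bound $h(z_t)$ from above by Jensen's inequality.
\item Reorder the double sum. The coefficients of $r_t=h(x_t)-h^*$ for $1\le t\le T$ vanish by the recursion defining $\alpha_t$.
\end{enumerate}
This yields
\[
\alpha_T\E[r_{T+1}]\le \frac{1}{2\tau}\E\|x_0-x^*\|^2+a\,\E[r_0]+\mathcal{E}\sum_{t=0}^T\alpha_t.
\]
Dividing by $\alpha_T$, using $a<1$ and $r_0\ge0$, and splitting $\mathcal{E}\sum_{t=0}^T\alpha_t=v\alpha_0+v\sum_{t=1}^T\alpha_t+8\tau m^2L_g^2\sum_{t=0}^T\alpha_t$ gives exactly the claimed bound.

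The only point needing care is the one place where the argument differs from the proximal SGD case: the error constant now contains the extra term $8\tau m^2L_g^2$. We must check that this term is deterministic and independent of $t$, so that it factors out of the weighted sum. It is, since $m$, $L_g$ and $\tau$ are all fixed. We must also check that \Cref{blockproxlinearcomb} holds for the same $\epsilon'$ used to define $a$. Its proof allows any $\epsilon'>0$, because $\epsilon'$ enters only through \Cref{variancetransfer}, so this holds as well. Nothing else in \Cref{theorem41} used the structure of the iteration.
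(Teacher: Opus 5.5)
Your proposal is correct and follows the paper's proof essentially step for step. You feed \Cref{blockproxlinearcomb} into the argument of \Cref{theorem41}, with $\mathcal{E}=v+8\tau m^2L_g^2$ and $\epsilon'=\frac{1-2\tau L}{1+2\tau L}$ so that $a<1$, and let the constant extra term accumulate as $8\tau m^2L_g^2\sum_{t=0}^T\alpha_t$ before dividing by $\alpha_T$; your checks that $z_t$ is $\mathcal{F}(x_0,\dots,x_t)$-measurable and that $\epsilon'$ is free in \Cref{blockproxlinearcomb} spell out details the paper leaves implicit.
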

\begin{proof}
    Notice that due to our choice of $\epsilon$, \Cref{blockproxlinearcomb} resulted in the same expression as the proximal SGD case (that is, \Cref{proxobjectivecomb_main}), except for an additional term $8 \tau m^2 L_g^2$. However, we have to verify
    some technical conditions that are required for the mechanism in \Cref{theorem41} to go through. We state them below:
    \begin{itemize}
        \item We need to ensure that $a = 2 \tau L (1+\epsilon') < 1$. 
        As $\tau L < \frac{1}{4} < \frac{1}{2}$, using the same choice of $\epsilon' = \frac{1 - 2 \tau L}{1 + 2 \tau L}$ as before, 
        we have $1 + \epsilon' = \frac{2}{1 + 2 \tau L}$ and $a = 2\tau L \cdot \frac{2}{1 + 2 \tau L} = \frac{4\tau L}{1 + 2 \tau L}$. 
        Since $\tau L < \frac{1}{4}$, we have $4\tau L < 1$, so $a < 1$ holds.
        \item The right-hand side of the bound in \Cref{blockproxlinearcomb} has an additional term $8 \tau m^2 L_g^2$ compared to lemma \ref{proxobjectivecomb_main}. 
        In \Cref{theorem41}, we multiply this term with $\alpha_t$ and this term will accumulate over $T$ iterations so we get the additional error
        \[8 \tau m^2 L_g^2\sum_{t=0}^T \alpha_t. \] 
Then, after dividing by $\alpha_T$ as in the end of the proof of \Cref{theorem41}, we get an additional error term on the right-hand side of the form
        \[ \frac{8 \tau m^2 L_g^2}{\alpha_T} \sum_{t=0}^T \alpha_t. \]
    \end{itemize}
Since we already have a term of the same order in    \Cref{theorem41}, this only changes the constants (cf. \eqref{bigTheorem}).

    Thus, following the same steps as in \Cref{theorem41}, we obtain the desired result.
\end{proof}

\subsection{Proof of Theorem \ref{BIGTHEOREM2}: Last-iterate analysis of \eqref{eq:intro-incrementalpgd} for polynomial stepsizes}\label{subsec: incr_mainthm_proof}
\begin{proof}
    It suffices to bound the last term in the right-hand side of the bound in \Cref{theorem52} due to the stochastic proximal operators. Splitting this term, we have
    \begin{align*}
        \frac{8 \tau m^2 L_g^2}{\alpha_T} \sum_{t=0}^T \alpha_t &= \underbrace{8 \tau m^2 L_g^2 \frac{\alpha_0}{\alpha_T}}_{1} + \underbrace{8 \tau m^2 L_g^2 \frac{\sum_{t=1}^T \alpha_t}{\alpha_T}}_{2}.
    \end{align*}
    Consider term 1, using $\alpha_0 = 1$, \Cref{technicallemma1}, and the definition of $\tau$, we have
    \begin{align*}
        8 \tau m^2 L_g^2 \frac{\alpha_0}{\alpha_T} &\leq 8 \tau m^2 L_g^2 \cdot \frac{2}{T^{1 - a}} = T^a \frac{16 m^2 L_g^2}{CL T^{1 + \beta}}.
    \end{align*}
    Consider term 2, using \Cref{technicallemma1} again, we have
    \begin{align*}
        8 \tau m^2 L_g^2 \frac{\sum_{t=1}^T \alpha_t}{\alpha_T} &\leq 8 \tau m^2 L_g^2 \cdot 8 T^a \ln (T + 1) = T^a\frac{64 m^2 L_g^2 \ln (T + 1)}{CL T^\beta}.
    \end{align*}
    Using \Cref{boundedTphi}, and combining the above two bounds, we have
    \begin{align*}
        \frac{8 \tau m^2 L_g^2}{\alpha_T} \sum_{t=0}^T \alpha_t &\leq A \left(\frac{16 m^2 L_g^2}{CL T^{1 + \beta}} + \frac{64 m^2 L_g^2 \ln (T + 1)}{CL T^\beta}\right),
    \end{align*}
    where $A$ is the constant defined in \Cref{boundedTphi}.
    
Plugging these estimations into \eqref{bigTheorem} gives the assertion.
\end{proof}

\subsection{Proof for Section \ref{sec: blockprox}}\label{app: blockprox}
\begin{proof}[Proof of \Cref{th: blockprox}]
We assume a constant step size $\tau$ that depends on the horizon $T$. Lemma~5.3 in \citet{LinKuangAlacaogluFriedlander2025} provides the inequality
\[
\E_t \|x_{t+1} - u\|^2 \leq \|y_t - u\|^2 - 2 \tau [g(x_t) - g(u)] + 2mL_g \tau^2 \|\nabla f(x_t)\| + 2m^2L_g^2 \tau^2
\]
for all $u$ measurable with respect to $\mathcal{F}_t$. Setting $u = z_t$ gives
\begin{align}\label{blockprox:main1}
    \E_t \|x_{t+1} - z_t\|^2 \leq \|y_t - z_t\|^2 - 2 \tau [g(x_t) - g(z_t)] + 2mL_g \tau^2 \|\nabla f(x_t)\| + 2m^2L_g^2 \tau^2.
\end{align}
Using Young's inequality, we obtain
\begin{align} \label{blockprox:youngs1}
    2mL_g \|\nabla f(x_t)\| \leq \|\nabla f(x_t)\|^2 + m^2L_g^2.
\end{align}

Using $y_t = x_t - \tau \nabla f(x_t)$, we expand
\begin{align}\label{blockprox:Fcvx}
\|y_t - z_t\|^2
&= \|x_t - z_t\|^2 - 2\tau \langle \nabla f(x_t), x_t - z_t\rangle + \tau^2\|\nabla f(x_t)\|^2 \notag\\
&\leq \|x_t - z_t\|^2 - 2\tau (f(x_t) - f(z_t)) + \tau^2\|\nabla f(x_t)\|^2 ,
\end{align}

where in the last line we used the convexity of $f$. Combining \eqref{blockprox:main1}, \eqref{blockprox:youngs1} and \eqref{blockprox:Fcvx} yields
\[
\E_t \|x_{t+1} - z_t\|^2
\leq \|x_t - z_t\|^2
- 2\tau[h(x_t) - h(z_t)]
+ 2\tau^2 \|\nabla f(x_t)\|^2
+ 3m^2L_g^2 \tau^2.
\]
By \citet[Lemma 5.7]{LinKuangAlacaogluFriedlander2025} or \Cref{variancetransfer}, we obtain
\[
\|\nabla f(x_t)\|^2 \leq (1+ \epsilon')2L[h(x_t)-h^*] + \left(1 + \frac{1}{\epsilon'}\right)\|\nabla f(x^*)\|^2,
\]
that is true for any $\epsilon'>0$.

Hence, we have
\begin{align*}
\E_t \|x_{t+1} - z_t\|^2
& \leq \|x_t - z_t\|^2
- 2\tau[h(x_t) - h(z_t)]
+ 4L(1+\epsilon')\tau^2[h(x_t)-h^*] \\
&\quad
+ 2\left(1 + \frac{1}{\epsilon'}\right)\tau^2\|\nabla f(x^*)\|^2
+ 3m^2L_g^2 \tau^2.
\end{align*}
Rearranging and dividing both sides by $2 \tau$ gives
\begin{align*}
&\left(1 - 2L(1+\epsilon')\tau\right) h(x_t)
- h(z_t)
+ 2L(1+\epsilon')\tau\, h^* \\
&\quad \leq \frac{1}{2\tau}\|x_t - z_t\|^2
- \frac{1}{2\tau}\E_t \|x_{t+1} - z_t\|^2
+ \left(1 + \frac{1}{\epsilon'}\right)\tau\|\nabla f(x^*)\|^2
+ 2m^2L_g^2 \tau.
\end{align*}
In the last term, we replaced $\frac{3}{2}$ with $2$ since $m^2L_g^2 \tau > 0$. By the optimality condition, $-\nabla f(x^*) \in \partial g(x^*)$. Since each $g_j$ is $L_g$-Lipschitz, their sum $g$ is $mL_g$-Lipschitz, so $\|\nabla f(x^*)\| \leq mL_g$. This yields
\begin{align*}
&\left(1 - 2\tau L(1+\epsilon')\right) h(x_t)
    - h(z_t)
    + 2\tau L(1+\epsilon') h^* \\
    &\quad \leq \frac{1}{2\tau}\|x_t - z_t\|^2
    - \frac{1}{2\tau}\E_t \|x_{t+1} - z_t\|^2
    + \left(3 + \frac{1}{\epsilon'}\right)m^2L_g^2 \tau .
\end{align*}

By taking $a = 1 - 2\tau L(1+\epsilon')$, $b = -1$, $c = 2\tau L(1+\epsilon')$ and $v = \left(3 + \frac{1}{\epsilon'}\right)m^2L_g^2 \tau$, we obtain 
\begin{align}\label{blockprox:finalrecursion}
    a h(x_t) + b h(z_t) + c h^* \leq \frac{1}{2\tau}\|x_t - z_t\|^2
    - \frac{1}{2\tau}\E_t \|x_{t+1} - z_t\|^2 + v.
\end{align}
This expression has the recursive structure of \citet[Lemma 4.2]{garrigos2025last} with different coefficients. To conclude, we verify the conditions of \citet[Lemma 4.3]{garrigos2025last}:
\begin{itemize}
    \item $a + b + c = 0$: holds by construction.
    \item $a > 0$: holds when $\tau L < \frac{1}{2}$ (which is assumed). Taking $\epsilon' = (1 - 2\tau L)/(1 + 2\tau L) > 0$ gives
    \[a = \frac{1-2\tau L}{1 + 2 \tau L}, \quad b = -1, \quad c = \frac{4\tau L}{1 + 2 \tau L}, \quad v = \frac{4 - 4\tau L}{1 - 2\tau L}m^2L_g^2\tau < \frac{4}{1 - 2\tau L}m^2L_g^2 \tau.\]
    \item $b \leq 0$: holds.
\end{itemize}
Additionally, notice that the term $v$ in our case is identical to the one in \citet[Lemma 4.2]{garrigos2025last} up to the constant coefficient ($4 m^2 L_g^2$ in ours vs $\sigma_*^2$ in theirs). Hence, we can apply \citet[Theorem 3.1]{garrigos2025last} and \citet[Corollary 3.4]{garrigos2025last} to obtain the 
$\mathcal{O}\left(\frac{\ln (T+1)}{\sqrt{T}}\right)$ rate.

\end{proof}

 \end{document}